\newtheorem{thm}{Theorem}
\newtheorem{lem}{Lemma}
\newtheorem{cor}{Corollary}[section]
{\theoremstyle{remark}
\newtheorem {Rem}{Remark}}
{\theoremstyle{definition}
\newtheorem{example}{Example}}
                         \newtheorem{defn}[thm]{Definition}
\def\div{\mathrm{div}}
\def\fchar{\mathrm{char}}
\def\supp{\mathrm{sup}}
                                                   \newcommand{\CC}{\mathcal C}
\newcommand{\al}{\alpha}
\newcommand{\eps}{\varepsilon}
                                             \newcommand{\alb}{\textit{alb}}
\newcommand{\beq}{\begin{equation}}
\newcommand{\eeq}{\end{equation}}
                                                 \newcommand{\W}{\mathfrak W}
\newcommand{\QQ}{\mathbb Q}
\newcommand{\Eta}{\mathrm{H}}
\title[Torsion points on hyperelliptic curves]{Torsion Points of order $\boldsymbol{2g+1}$ on odd degree hyperelliptic curves of genus $\boldsymbol{g}$}
\author {Boris M. Bekker}
\address{St. Petersburg State University, Department of Mathematics and Mechanics,   Universitates prospect, 28, Peterhof, St. Petersburg, 198504, Russia}
\email{ bekker.boris@gmail.com}
\author {Yuri G. Zarhin}
\address{Pennsylvania State University, Department of Mathematics, University Park, PA 16802, USA}
\email{zarhin@math.psu.edu}
\thanks{The second named author (Y.Z.) was partially supported by Simons Foundation Collaboration grant   \# 585711.
This project was started in May-July 2018 when Y.Z. visited
 the Max Planck Institut f\"ur Mathematik (Bonn, Germany), whose hospitality and support are gratefully acknowledged.
Both authors are grateful to the organizers of the workshop
``Arithmetic of curves"  (August 2018) at Baskerville Hall (Wales) for excellent working conditions that enabled them
to complete the project.}
\begin{document}
\begin{abstract}
Let $K$ be an algebraically closed field of characteristic different from $2$, $g$ a positive integer, $f(x)\in K[x]$ a degree $2g+1$ monic polynomial without multiple roots, $\mathcal{C}_f: y^2=f(x)$ the corresponding genus $g$ hyperelliptic curve over $K$, and $J$ the Jacobian of $\mathcal{C}_f$. We identify $\mathcal{C}_f$ with the image of its canonical embedding into $J$ (the infinite point of $\mathcal{C}_f$ goes to the zero of the group law on $J$). It is known \cite{Zarhin} that if $g\ge 2$, then $\mathcal{C}_f(K)$  contains no  points  of orders  lying between $3$ and $2g$.

In this paper we study torsion points of order $2g+1$ on $\mathcal{C}_f(K)$. Despite  the striking difference between the cases of $g=1$ and $g\ge 2$,  some of our results may be viewed as a generalization of well-known results about points of order $3$ on elliptic curves. E.g.,   if  $p=2g+1$ is a prime that coincides with $\fchar(K)$, then every  odd degree genus $g$ hyperelliptic curve contains at most two points of order $p$. If  $g$ is {\sl odd} and  $f(x)$ has {\sl real} coefficients,  then there are at most two real points of order $2g+1$ on $\mathcal{C}_f$. If  $f(x)$ has {\sl rational} coefficients and $g\le 51$, then
 there are at most two rational points of order $2g+1$ on $\mathcal{C}_f$.  (However,  there   exist  odd degree  genus $52$ hyperelliptic curves over $\mathbb{Q}$ that have at least four rational points of order 105.)
\end{abstract}

\subjclass[2010]{14H40, 14G27, 11G10, 11G30}

\keywords{Hyperelliptic curves, Jacobians, torsion points}

\maketitle

\section{Introduction}
\label{introd}
Let $K$ be an algebraically closed field and $K_0$ a subfield of $K$. Let $\CC$   be a smooth irreducible projective curve of {\sl positive} genus $g$ over $K$. We say that $\CC$ is defined over $K_0$ if there exists a smooth  projective curve
   $\CC_0$  of the same genus $g$  over $K_0$ such that $\CC=\CC_0\times_{K_0}K$.  Let $\CC$ be defined over $K_0$, $O$ be a $K_0$-point on $\CC$, and $J$ the Jacobian of $C$, which is a $g$-dimensional abelian variety over $K_0$.
There is a canonical $K_0$-regular embedding $\alb:\CC \to J$ that sends $O$ to the zero of the group law on $J$   and every point $P \in \CC(K)$ to the linear equivalence class of the divisor $(P)-(O)$. We identify $\CC$ with its image in $J$.

A celebrated theorem of Raynaud (Manin--Mumford conjecture) asserts that if $g>1$ and $\mathrm{char}(K)=0$, then the set of torsion points in $\CC(K)$ is finite \cite{Raynaud}. (This assertion does not hold in prime characteristic: e.g., if $K$ is an algebraic closure of a finite field, then $\CC(K)$ is an infinite set that consists of torsion points.) The importance of  determining explicitly the finite set occuring   in Raynaud's theorem
for specific curves was stressed  by R.Coleman,  K.Ribet, and other authors (for more details, see \cite{Ribet}, \cite{Tsermias}).

In particular, it is  natural  to ask what are the ``small'' orders of torsion points in $\CC(K_0)\subset J(K_0)$ for specific $K_0$.
More precisely:\begin{itemize}\item
    does  there exist (for   given $g>1$, $K_0$, and a positive integer $n$) a genus $g$ curve $\CC$ over $K_0$ such that $\CC(K_0)$ contains a torsion point of order $n?$

    \item if such a curve exists, then how many   points of order $n$ it may contain?
  \end{itemize}

     Of course, $O\in \CC(K)\subset J(K)$ is the only point of order $1$, so we may assume that $n>1$.
In what follows we assume that $\fchar(K)\ne 2$, $\CC$ is  a genus $g$ {\sl hyperelliptic} curve, and $O$ is  one of the {\sl Weierstrass points} of $\CC$.
It is well known that the torsion points of order $2$ in $\CC(K)$ are  the remaining (different from $O$) $(2g+1)$ Weierstrass points on $\CC(K)$.

  It was proven by J. Boxall and D. Grant in \cite{Box1} that
for $g=2$ there are no points of order $3$ or $4.$

  The second named author proved in   \cite[Theorem 2.8]{Zarhin} that   $\mathcal \CC(K)$  does {\sl not} contain a point of order $n$ if $g \ge 2$ and
 $3\leq n\leq 2g$.

Thus  the first nontrivial case is $n=2g+1$,  which is the subject of the present paper.

	  In the case of $g=2$ such  a study  was done by
 J. Boxall, D. Grant, and F. Lepr\'evost
    in \cite{Box2}, where  a  classification (parameterization) of the genus 2 curves  (up to an isomorphism)  with torsion points of order 5 over algebraically closed fields  was given.  In particular, it was proven in \cite{Box2} that if $\fchar(K)=5$,
     then $\mathcal C(K)$ contains at most 2 points of order 5.  The latter assertion may be viewed as a genus $2$  analog  of the following well-known fact: an elliptic curve in characteristic 3 has at most 2 points of order 3.

Here are our results about genus $g$ hyperelliptic curves   $\CC $  with torsion points of order $n=2g+1$.

\begin{itemize}\item[1.]  For every $K_0$ and every $g$ there exists a curve
$\CC$ such that $\CC(K_0)$ contains at least two points of order  $n=2g+1$
(Examples \ref{exNotDivide} and \ref{exDivide}, Remark \ref{exist2gp1}). Actually, we construct a versal family of such curves that is parameterized by an affine rational $K_0$-variety (Theorems 2 and 1).

\item[2.]  If $K_0$ is the field $\mathbb Q$ of rational numbers and $g\le 100$,   then  a curve $\CC$  having at least four points of order  $n=2g+1$ in
$\CC(\QQ)$ exists if and only if $g=52$ or $g=82$
(Corollary \ref{overQ} and Example  \ref{EXoverQ}).

\item[3.] If $K_0$ is the field $\mathbb R$  of real numbers,  then    a curve $\CC$  having at least four points of order  $n=2g+1$ in  $\CC({\mathbb R})$
 exists if and only if $g$ is {\sl even}
(Corollary \ref{realNO}) and Example \ref{realYES}).
\item[4.]  If $p=\fchar(K)>0$ and   $2g+1$ is a power of $p$ (e.g., $2g+1=p$),  then  $\CC(K)$ contains at  most  two points of order  $n=2g+1$  for every
$\CC$ (Theorem \ref{char2gPlus1}).
\item[5.]  If  $2g+1$ is {\sl not} a power of $\fchar(K)$ (e.g., $\fchar(K)=0$ or $\fchar(K)>2g+1$),  then there exists a versal family  of curves having at least four   points of order  $n=2g+1$ in
$\CC(K).$ This family is parameterized by a finite (nonempty) disjoint union of affine rational $K$-curves (Theorems \ref{theorem20} and \ref{theorem22}).
 \end{itemize}

While there are   striking differences between the cases of $g=1$ (elliptic curves) and $g\ge 2$,  our assertions 1, 3, and 4 may be viewed as  generalizations of well-known results about points of order $3$ on elliptic curves.

 The paper is organized as follows. In Section \ref{prel} we remind the reader of some basic results about odd degree hyperelliptic curves.
 It also contains  auxiliary assertions from \cite{Zarhin} that will be used later. In Section \ref{twoPoints} we describe odd degree genus $g$ hyperelliptic curves with one pair of torsion points of order $2g+1$.  It turns out that such curves and points exist over arbitrary fields for all $g$ (Examples \ref{exNotDivide} and \ref{exDivide}).  We give a characterization of hyperelliptic genus $g$ curves with two  pairs of torsion points of order $2g+1$ in terms of certain factorizations of the polynomial $(x-a_2)^{2g+1}-(x-a_1)^{2g+1}$ where $a_1$ and $a_2$ are abscissas of the torsion points. Each such factorization gives rise to a   one-dimensional family of hyperelliptic genus $g$ curves with two  pairs of torsion points of order $2g+1$,    and we study them in
Section~ \ref{famCurves}. In Section~ \ref{nonex} we discuss the rationality questions and prove the results over $\mathbb{R}$ and $\mathbb{Q}$ mentioned above. We also introduce and discuss the notion of {\sl hyperelliptic} numbers, which may be of independent interest. In Section~ \ref{sec3} we concentrate on the case of algebraically closed field. We study  odd degree genus $g$ hypelliptic curves that have two torsion points $P,Q$ of order $2g+1$ with $P\ne Q, P\ne \iota(Q)$ and  provide a parameterization of their isomorphism classes by a disjoint union of
 finitely many  affine rational curves. In Section \ref{WeilComp}  we compute the value of the Weil pairing between certain torsion points of order $2g+1$ on $\mathcal{C}_f$.

\section{Preliminaries}
\label{prel}

Let  $K$ be an algebraically closed field with $\fchar(K)\ne 2$.
Let  $\mathcal C$  be a hyperelliptic curve of genus $g\geq 1$  over $K$.
 Let $K(\mathcal C)$ be the field of rational functions on $\mathcal C$ and  $J$ the Jacobian of $\mathcal C$.
  Let $O\in \mathcal{C}(K)$ be  a {\sl Weierstrass}
 point on $\mathcal C$.  The  pair $(\mathcal C,O)$ is called a {\sl pointed} hyperelliptic curve \cite{Lock}.  (If $g=1$, then every $K$-point of $\mathcal C$ is a Weierstrass one. If $g>1$, then there are exactly $2g+2$ Weierstrass $K$-points on $\mathcal C$ .)
 By the definition of a Weierstrass point \cite{Lock},
 there exists a rational function $x\in K(\mathcal C)$  that is regular outside $O$ and has a double pole at $O$. (Any other rational function on $\mathcal C$
 that enjoys these properties is of the form $\alpha x+\beta$ with $\alpha\in K^{*},\beta \in K$ \cite{Lock}.) The regular map $\pi:\mathcal C \to \mathbb{P}^1$
 to the projective line $\mathbb{P}^1$ defined by $x$ is a {\sl double cover} that sends $O$ to the infinite point of  $\mathbb{P}^1$. The $K$-biregular {\sl involution}
 $$\iota=\iota_{\mathcal C}:\mathcal C \to \mathcal C$$
 attached to $\pi$
  is the so-called {\sl hyperelliptic involution} of the hyperelliptic curve $\mathcal C$, which does {\sl not} depend on a choice of $x$;   it even does not depend on a choice of  $O$  if $g>1$.  The set of fixed points of $\iota$ (i.e., the set of  branch points of $\pi$) is a certain $(2g+2)$-element set of  Weierstrass points in $\mathcal C(K)$, including $O$. (If $g>1$, then this set coincides with the set of all Weierstrass points on   $\mathcal C$.) The $K$-vector subspace $\mathcal{L}((2g+1)(O))\subset K(\mathcal C)$ of functions that are regular outside $O$ and have a pole of order  at most  $2g+1$ at $O$ has dimension
 $g+2$; in addition, it is $\iota$-stable and contains $g+1$ linearly independent $\iota$-invariant functions $1,x, \dots, x^g$ that have a pole of order at most $2g$ at $O$ \cite{Lock}. This implies that
 there exists a rational function $y\in K(\mathcal C)$  that is $\iota$-anti-invariant, regular outside $O$, and has a pole of order $2g+1$ at $O$; such a  $y$ is unique up to multiplication by a nonzero element of $K$.
 In addition, there exists a degree $2g+1$
 polynomial $f(x)\in K[x]$ without multiple roots such that $y^2=f(x)$ in $K(\mathcal C)$ \cite{Lock}. Multiplying $x$ and $y$ by suitable nonzero elements of $K$, we may and will assume that $f(x)$ is monic.
 The functions $(x,y)$ define a biregular $K$-isomorphism between $\mathcal C$ and the (smooth) normalization $\mathcal C_f$  of the projective closure of the smooth plane affine curve $y^2=f(x)$ under which $O$ goes to the unique {\sl infinite} point of $\mathcal C_f$ \cite{Lock}, which we denote by $\infty$; in addition, $\iota_{\mathcal C}$ becomes the involution
 $$\mathcal C_f \to \mathcal C_f,  \ (x,y)\mapsto (x,-y).$$
The fixed points of $\iota$ are
  $\infty$ and all the points $\W_i=(w_i,0)$, where $w_i\in K$ ($1\leq i\leq 2g+1$) are the roots of   $f(x)$.

The action of $\iota$ on $\mathcal C(K)$ extends by linearity to the action on divisors of $\mathcal C$.
  Notice that for any nonzero rational function
   $F$ on $\mathcal C$ we have  $\div(\iota^{\ast}(F))=\iota(\div F),$ where $\div(F)$ is the divisor of
$F$ and $\iota^{\ast}$ is the induced action of $\iota$ on the field of rational functions on $\mathcal C$.  Thus we obtain the induced action of $\iota$ on the linear equivalence classes of divisors on $\mathcal C$.
 If $P\in \mathcal C(K)$, then we write $(P)$ for the corresponding degree 1 effective divisor with support in $P$. If $P=(a,b)$, then $\div(x-a)=(P)+(\iota(P))-2(\infty)$. This explains why after the identification of $\mathcal C$ with its image in $J$ the involution $\iota$ becomes multiplication by $-1$ and the points of order 2 in  $\mathcal C(K)$  are all (except $\infty$)
 $(2g+1)$
  branch points of $\pi$
  of  $\mathcal C$.
   Notice that if
   $\mathcal C(K)$ contains a torsion point $P$ of order $n>2$, then it contains the torsion point $\iota(P)\ne P$
  of the same order, which implies that
  the number of  points of order $n$ in $\mathcal C(K)$ is even.

  If $K_0$ is a subfield of $K$, $\CC$ is defined over $K_0$, and $O\in\CC(K_0)$, then $(\mathcal C,O)$ is called a {\sl pointed  hyperelliptic curve  \sl over}  $K_0$ \cite{Lock}. Let $(\mathcal C,O)$ be a pointed  hyperelliptic curve over $K_0$ and $K_0(\CC)$ be the field of $K_0$-rational functions on $\CC$. Then the coordinate functions $x$ and $y$
 can be chosen in $K_0(\CC)$, the double cover $\pi:\mathcal C \to \mathbb{P}^1$ defined by $x$ is $K_0$-regular, and the involution $\iota_{\mathcal C}:\mathcal C \to \mathcal C$ is $K_0$-biregular. In addition, the monic polynomial $f(x)$ such that $y^2=f(x)$ can be chosen in $K_0[x]$.  The functions $(x,y)$ define a biregular $K_0$-isomorphism between $\mathcal C$ and the (smooth) normalization $\mathcal C_f$  of the projective closure of the smooth plane affine curve $y^2=f(x)$ over $K_0$ under which $O$ goes to  $\infty$ \cite{Lock}.

  We  say that two pointed hyperelliptic curves $(\mathcal C_1, O_1)$ and $(\mathcal C_2, O_2)$ are isomorphic if there exists a $K_0$-regular isomorphism $\phi: \mathcal C_1\to \mathcal C_2$ such that $\phi(O_1)=O_2$.
Each isomorphism class of pointed hyperelliptic  curves over $K_0$ contains a pointed hyperelliptic curve  of the form $(\mathcal C_f,\infty)$. In what follows, we may assume without loss of generality that $\mathcal C=\mathcal C_f$ for a suitable $f(x)\in K_0[x]$ and $O=\infty$.
  \begin{Rem}
  \label{isoPoint} Let $(\CC,\infty)$ and $(\CC_1,\infty_1)$ be pointed genus $g$ hyperelliptic curves over $K_0$ defined respectively by
$y^2=f(x)$ and  $ y_1^2=f_1(x_1)$, where $f(x),f_1(x)\in K_0[x]$ are monic degree $2g+1$ polynomials without multiple roots.
 Let $\phi: (\mathcal C,\infty) \cong (\mathcal C_1,\infty_1)$ be an isomorphism of pointed hyperelliptic curves over $K_0$, i.e.,
  a $K_0$-biregular isomorphism $\CC \to \CC_1$ of $K_0$-curves that sends $\infty$ to $\infty_1$.  Then there exist $\lambda \in K_0^{*}$ and $r\in K_0$
  such that
  $$\phi^{*}(x_1)=\lambda^2 x+r \in K_0(\mathcal C), \ \phi^{*}(y_1)=\lambda^{2g+1} y \in K_0(\mathcal C)$$
  (see \cite[Prop. 1.2 and Remark on p. 730]{Lock}).
  This implies that in $K_0(C)$
  $$(\lambda^{2g+1} y)^2=f_1(\lambda^2 x+r)$$
  and therefore
  $$y^2=\frac{f_1(\lambda^2 x+r)}{\lambda^{2(2g+1)}}.$$
  Consequently,
  $$f(x)=\frac{f_1(\lambda^2 x+r)}{\lambda^{2(2g+1)}}$$
  and therefore
  $$f_1(x)=\lambda^{2(2g+1)} \cdot f\left(\frac{x-r}{\lambda^2}\right).$$
  Assume additionally that
  $f(0) \ne 0, f_1(0)\ne 0$, and $\phi$ sends a point $P=\left(0,\sqrt{f(0)}\right)\in \mathcal{C}(K)\setminus \{\infty\}$
  with abscissa $0$ to a point $P_1\in \mathcal{C}_1(K)\setminus  \{\infty\}$
  with abscissa $0$. Then $r=0$ and
  \beq\label{changeL}
   \phi^{*}(x_1)=\lambda^2 x, \phi^{*}(y_1)=\lambda^{2g+1} y, \ f_1(x)=\lambda^{2(2g+1)} \cdot f\left(\frac{x}{\lambda^2}\right).
   \eeq
   Let us assume also that there are  nonzero  $a_,b\in K_0$  such that
    $$f(a) \ne 0, f_1(b)\ne 0$$ and $\phi$ sends a point $Q=(a,\sqrt{f(a)})\in \mathcal{C}(K)\setminus \{\infty\}$
  with abscissa $a $ to a point $Q_1\in \mathcal{C}_1(K)\setminus  \{\infty\}$
  with abscissa $b$.
   Then $b=x_1(Q)=\lambda^2 x(P)=\lambda^2 a$, i.e.,
   \beq\label{aLambdab}
   \lambda^2=\frac{b}{a},\  \lambda=\sqrt{\frac{b}{a}},
  \eeq
  Since $\lambda \in K_0$, we conclude that $b/a$ is a square in $K_0$. In addition
   \beq\label{fLambdaf1}
   f_1(x)=\lambda^{2(2g+1)} \cdot f\left(\frac{x}{\lambda^2}\right)=\left(\frac{b}{a}\right)^{2g+1} f\left(\frac{x}{b/a}\right).
  \eeq
  In particular, if $a=b$, then $b/a=1$ and therefore $f(x)=f_1(x)$,
  i.e., $\mathcal C=\mathcal C_1$
   and either
  $$\lambda=1, \phi^{*}(x_1)=x, \phi^{*}(y_1)=y_1$$
   and $\phi$ is the identity map
  or
  $$\lambda=-1, \phi^{*}(x_1)=x, \phi^{*}(y_1)=-y_1$$
  and $\phi=\iota$.
  \end{Rem}

We will need the following assertion that was proven in \cite{Zarhin}.

\begin{lem}\label{l1}
Let $D$ be an effective positive degree $m$ divisor on $\mathcal C$  such that $m \le 2g+1$ and $\supp(D)$ does not contain $\infty$. Assume that the divisor $D-m(\infty)$ is principal.

\begin{enumerate}
\item[(1)]
Suppose that $m$ is odd.
 Then:

 \begin{itemize}
 \item[(i)]
  $m=2g+1$ and there exists exactly one polynomial $v(x)\in K[x]$ such that  the divisor of $y-v(x)$ coincides with $D-(2g+1)(\infty)$. In addition, $\deg(v)\le g$.
    \item[(ii)]
    If $\W_i$ lies in  $\supp(D)$, then it appears in $D$ with multiplicity 1.
    \item[(iii)]
    If $b$ is a nonzero element of $K$ and  $P=(a,b) \in \mathcal C(K)$ lies in  $\supp(D)$, then $\iota(P)=(a,-b)$ does not lie in  $\supp(D)$.
  \end{itemize}
  \item[(2)]
  Suppose that $m=2d$ is even. Then there exists exactly one monic degree $d$ polynomial $u(x)\in K[x]$ such that  the divisor of $u(x)$ coincides with $D-m(\infty)$.  In particular, every point $Q \in \mathcal C(K)$ appears in $D-m(\infty)$ with the same multiplicity as $\iota(Q)$.
  \end{enumerate}
\end{lem}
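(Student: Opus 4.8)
The plan is to reduce the statement to the explicit description of rational functions on $\mathcal C$ that are regular away from $\infty$. Since $D-m(\infty)$ is principal, I would fix a rational function $F$ on $\mathcal C$ with $\div(F)=D-m(\infty)$. Because $D$ is effective and $\infty\notin\supp(D)$, the only pole of $F$ is at $\infty$, so $F$ lies in the ring of functions regular away from $\infty$; every such function is uniquely of the form $F=p(x)+q(x)y$ with $p,q\in K[x]$. The next step is to read off pole orders at $\infty$: since $x$ has a pole of order $2$ and $y$ a pole of order $2g+1$ there, the summand $p(x)$ contributes an even pole order $2\deg p$ and $q(x)y$ an odd pole order $2\deg q+2g+1$. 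Having opposite parities, these never cancel, so the pole order of $F$ equals $\max(2\deg p,\,2\deg q+2g+1)$, and this must equal $m$.

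Everything now follows from this parity dichotomy, which is the one point I expect to require care. Suppose first that $m$ is odd. Then the maximum above is attained by the odd summand, which forces $q\ne 0$ and $2\deg q+2g+1=m\le 2g+1$; hence $\deg q=0$, so $m=2g+1$, while $2\deg p<m$ gives $\deg p\le g$. Rescaling $F$ by the constant $q$, I may assume $q=1$, and putting $v=-p$ yields $F=y-v(x)$ with $\deg v\le g$, which is (i); uniqueness holds because two such functions differ by a multiplicative constant whose value is pinned to $1$ by comparing $y$-coefficients. For (ii) I would work locally at a Weierstrass point $\W_i=(w_i,0)$, where $y$ is a uniformizer and $x-w_i$ has order $2$: thus $y-v(x)$ has order $0$ when $v(w_i)\ne 0$ (so $\W_i\notin\supp(D)$) and order exactly $1$ when $v(w_i)=0$, so $\W_i$ can occur in $D$ only with multiplicity $1$. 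For (iii), if $P=(a,b)$ with $b\ne 0$ is a zero of $y-v$ then $v(a)=b$, so the value of $y-v$ at $\iota(P)=(a,-b)$ is $-b-v(a)=-2b\ne 0$, using $\fchar(K)\ne 2$; hence $\iota(P)\notin\supp(D)$.

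Suppose instead that $m=2d$ is even. Then the maximum is attained by the even summand, so $2\deg p=2d$, while the odd summand satisfies $2\deg q+2g+1<2d\le 2g+1$; since $q\ne 0$ would force $2\deg q+2g+1\ge 2g+1$, this yields $q=0$ and $\deg p=d$. Dividing $p$ by its leading coefficient produces a monic $u(x)$ of degree $d$ with $\div(u)=D-m(\infty)$, and monicity makes $u$ unique. The concluding ``in particular'' is then immediate from the formula $\div(x-a)=(P)+(\iota(P))-2(\infty)$ recalled in the Preliminaries: writing $u$ as a product of linear factors exhibits $D-m(\infty)$ as a sum of $\iota$-symmetric divisors, so every $Q$ and $\iota(Q)$ appear with equal multiplicity. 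The only genuine obstacle is the careful pole-order bookkeeping together with the split into the cases $v(w_i)\ne 0$ and $v(w_i)=0$ at Weierstrass points; beyond that, no deeper input is required.
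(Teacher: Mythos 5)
Your proof is correct, and it is essentially the canonical argument for this lemma: the paper itself gives no proof (Lemma \ref{l1} is quoted from \cite{Zarhin}), and your route --- the unique decomposition $F=p(x)+q(x)y$ of a function regular away from $\infty$, the non-cancelling parity of the pole orders $2\deg p$ and $2\deg q+2g+1$ at $\infty$, and the local analysis at $\W_i$ where $y$ is a uniformizer and $x-w_i$ has order $2$ --- is precisely the argument in that reference. All the bookkeeping checks out, including the forced conclusions $q\in K^{*}$, $m=2g+1$, $\deg v\le g$ in the odd case, $q=0$, $\deg p=d$ in the even case, the pinning of the constants for uniqueness, and the $\iota$-symmetry of $\div(u)$ via $\div(x-a)=(P)+(\iota(P))-2(\infty)$.
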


We finish this section  by the following elementary useful statement.

 \begin{lem}\label{l2}
 Let $K_0$ be a field, let $a$ be a nonzero element of $K$, and let $w(x) \in K_0[x]$ be a degree $g$ polynomial with nonzero constant term. Then there exists a unique
 degree $g$ polynomial $\tilde{w}(x) \in K_0[x]$ with nonzero constant term such that in the field $K_0(x)$ of rational functions
 $$\tilde{w}(a/x)=\frac{w(x)}{x^g}.$$
 \end{lem}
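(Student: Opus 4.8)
The plan is to produce $\tilde w$ explicitly as a twisted reciprocal of $w$, verify the two degree/constant-term conditions, and then dispose of uniqueness by a transcendence remark. Since the conclusion asks for $\tilde w\in K_0[x]$, I take $a\in K_0^{*}$ throughout (this is what is actually used).

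First I would make the change of variable $t=a/x$, which is legitimate as $a\ne 0$; solving for $x$ gives $x=a/t$, so the prescribed right-hand side becomes $w(x)/x^g = a^{-g}\,t^g\,w(a/t)$ after substitution. This motivates \emph{defining} $\tilde w(t):=a^{-g}\,t^g\,w(a/t)$. Writing $w(x)=\sum_{i=0}^{g}c_i x^i$ with $c_0\ne 0$ and $c_g\ne 0$, a one-line expansion yields $\tilde w(t)=a^{-g}\sum_{i=0}^{g}c_i a^i\,t^{\,g-i}$, which is manifestly a polynomial of degree at most $g$ with coefficients in $K_0$.

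The single point that needs care is confirming that $\tilde w$ has degree \emph{exactly} $g$ and nonzero constant term, and here the two hypotheses on $w$ enter \emph{crossed over}. The coefficient of $t^{g}$ in $\tilde w$ is $a^{-g}c_0$, nonzero precisely because $w(0)\ne 0$; the constant term of $\tilde w$ is $c_g$, nonzero precisely because $\deg w=g$. Thus $\deg w=g$ supplies the nonvanishing of the constant term of $\tilde w$, while $w(0)\ne 0$ supplies $\deg\tilde w=g$. Substituting $t=a/x$ back and using $a/(a/x)=x$, I would check $\tilde w(a/x)=a^{-g}(a/x)^{g}w(x)=w(x)/x^{g}$ in $K_0(x)$, which establishes existence.

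For uniqueness, suppose $\tilde w_1,\tilde w_2\in K_0[x]$ both satisfy the identity; then $\tilde w_1(a/x)=\tilde w_2(a/x)$ in $K_0(x)$. Since $a/x$ is a nonconstant rational function, it is transcendental over $K_0$, so the $K_0$-algebra homomorphism $K_0[t]\to K_0(x)$ sending $t\mapsto a/x$ is injective; hence $\tilde w_1=\tilde w_2$. (Equivalently, writing $\tilde w(x)=\sum_{k=0}^{g}d_k x^k$ and comparing coefficients of $x^{-k}$ on both sides of $\tilde w(a/x)=w(x)/x^{g}$ forces $d_k a^{k}=c_{g-k}$ for $0\le k\le g$, which, since $a\ne 0$, determines each $d_k=c_{g-k}a^{-k}$ uniquely.) No genuine obstacle arises beyond the bookkeeping of this coefficient reversal.
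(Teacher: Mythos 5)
Your proof is correct and follows essentially the same route as the paper's: both define $\tilde w$ by the twisted coefficient reversal $\tilde w(t)=a^{-g}t^g w(a/t)=\sum_i (c_i/a^{g-i})\,t^{g-i}$ and read off the degree and constant term from $c_0\ne 0$ and $c_g\ne 0$ exactly as you do. Your two additions --- spelling out uniqueness via transcendence of $a/x$ (which the paper dismisses as obvious) and noting that $a$ must lie in $K_0^{*}$ for the conclusion $\tilde w\in K_0[x]$ to make sense (true in every application of the lemma) --- are sensible refinements, not a different method.
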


 \begin{proof}
 We have
 \beq
 \label{xax}
 w(x)=\sum_{i=0}^g b_i x^i, \  a_i \in K_0,\  b_0 \ne 0,\ b_g\ne 0.
 \eeq
 Then
 $$\frac{w(x)}{x^g}=\sum_{i=0}^g b_i x^{i-g}=\sum_{i=0}^g \frac{b_i}{a^{g-i}} (a/x)^{g-i}.$$
 Let us put
 $$\tilde{w}(x)=\sum_{i=0}^g \frac{b_i}{a^{g-i}}  x^{g-i}\in K_0[x].$$
 Clearly, $\deg(\tilde{w}) \le g$. The coefficient of $\tilde{w}$ at $x^g$ is $b_0/a^g \ne 0$, and therefore $\deg(\tilde{w}) = g$.
 The constant term of $\tilde{w}$ is $b_g \ne 0$. It follows from \eqref{xax} that
 $$\tilde{w}(a/x)=\frac{w(x)}{x^g}.$$
 The uniqueness of $\tilde{w}$ is obvious.
\end{proof}

\section{Torsion  points of order $2g+1$}
\label{twoPoints}

The next assertion describes all odd degree hyperelliptic curves of genus $g$ that admit a torsion point of order
 $2g+1$.
\begin{thm}\label{ord 2g+1}
Let $g \ge 1$ be an integer and $f(x)\in K[x]$  a monic degree  $2g+1$ polynomial without multiple roots.  Then the odd degree hyperelliptic curve
  $y^2=f(x)$ has a point $P$ of order $2g+1$ if and only if there exist $a\in K$ and a polynomial  $v(x)\in K[x]$  such that
   $$\deg(v)\le g, \ v(a)\neq0, \ f(x)=(x-a)^{2g+1}+v^2(x).$$
If this is the case, then the point $P=(a,v(a)) \in \mathcal C(K)$ has order $2g+1$.
\end{thm}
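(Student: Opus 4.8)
The plan is to reduce both implications to the single principal divisor attached to a point of order $2g+1$, and then to read off the shape of $f$ from the function realizing it. Throughout I will use that a point of order $>2$ is never a Weierstrass point, so it has the form $P=(a,b)$ with $b\ne 0$, and consequently $\iota(P)=(a,-b)\ne P$ (recall $\fchar(K)\ne 2$).

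For the forward direction, suppose $P=(a,b)$ has order $2g+1$. Then $D:=(2g+1)(P)$ is effective of degree $m=2g+1$, its support avoids $\infty$, and $D-(2g+1)(\infty)$ is principal, so I can apply Lemma~\ref{l1}(1)(i). It produces a unique $v(x)\in K[x]$ with $\deg(v)\le g$ and $\div(y-v(x))=(2g+1)(P)-(2g+1)(\infty)$; in particular $y-v(x)$ vanishes at $P$, forcing $v(a)=b\ne 0$, so that $P=(a,v(a))$. To pin down $f$, I will examine $h(x):=(y-v(x))(y+v(x))=f(x)-v(x)^2$, which is \emph{monic} of degree $2g+1$ because $\deg(v^2)\le 2g$. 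Using $\iota^{*}(y-v)=-(y+v)$ and $\div(\iota^{*}F)=\iota(\div F)$, I will get $\div(y+v)=(2g+1)(\iota P)-(2g+1)(\infty)$, so that the only zeros of $h$ on $\mathcal C$ are $P$ and $\iota P$, both lying over $x=a$. Since $K$ is algebraically closed, every root of $h$ gives a zero of $h$ on $\mathcal C$, so the only root of the monic degree $2g+1$ polynomial $h$ is $a$; hence $h(x)=(x-a)^{2g+1}$, i.e. $f(x)=(x-a)^{2g+1}+v(x)^2$.

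For the converse, assume $f=(x-a)^{2g+1}+v^2$ with $\deg(v)\le g$ and $v(a)\ne 0$. Then $f(a)=v(a)^2$, so $P:=(a,v(a))$ lies on $\mathcal C$. The key point is to compute $\div(y-v(x))$ directly: this function has a single pole, of order $2g+1$, at $\infty$ (as $y$ has pole order $2g+1$ while $v(x)$ has pole order $\le 2g$), so its zero divisor has degree $2g+1$. Any zero satisfies $f(x)=v(x)^2$, i.e. $(x-a)^{2g+1}=0$, hence lies over $x=a$; moreover $y-v$ does not vanish at $\iota P=(a,-v(a))$ because $-v(a)-v(a)=-2v(a)\ne 0$. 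Thus every zero is concentrated at $P$, giving $\div(y-v(x))=(2g+1)(P)-(2g+1)(\infty)$ and therefore $(2g+1)P=0$ in $J$.

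It then remains to upgrade ``$(2g+1)P=0$'' to ``order exactly $2g+1$'', which is the one genuinely nonformal step and the place where a composite $2g+1$ might seem to cause trouble. Since $P$ is affine, its order exceeds $1$; since the order divides the odd number $2g+1$, it is odd, hence at least $3$. Were the order a proper divisor $d<2g+1$, the divisor $d(P)-d(\infty)$ would be principal with $d$ odd, $d\le 2g+1$, and $\infty\notin\supp(d(P))$ --- but Lemma~\ref{l1}(1)(i) asserts that in exactly this situation $d=2g+1$, a contradiction. Hence $P$ has order $2g+1$. The genuine crux is thus Lemma~\ref{l1}(1)(i): it both supplies the distinguished anti-invariant function $y-v(x)$ with $\deg(v)\le g$ and pole order $2g+1$ (which drives the degree count forcing $f-v^2=(x-a)^{2g+1}$) and, used a second time, eliminates all smaller admissible odd orders.
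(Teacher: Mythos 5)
Your proof is correct, and its skeleton coincides with the paper's: both directions run through the function $y-v(x)$ supplied by Lemma~\ref{l1}(1)(i), the forward direction reads off $f-v^2=(x-a)^{2g+1}$ from the divisor of $(y-v)(y+v)$, and the converse computes $\div(y-v(x))=(2g+1)\bigl((P)-(\infty)\bigr)$ exactly as in the paper (your $\iota^{*}$ computation of $\div(y+v)$ just makes explicit what the paper asserts). The one genuine divergence is the final step, upgrading $(2g+1)P=0$ to exact order $2g+1$. The paper splits cases: for $g=1$ it uses that $3$ is prime, and for $g\ge 2$ it invokes the theorem of \cite{Zarhin} that $\mathcal{C}(K)$ has no points of order $n$ with $3\le n\le 2g$. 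You instead observe that the order $d$ divides the odd number $2g+1$, hence is odd and at least $3$, and then apply Lemma~\ref{l1}(1)(i) a second time to the effective divisor $d(P)$: the lemma forces $d=2g+1$, killing every proper divisor at once. This is a legitimate shortcut, and arguably cleaner: it treats all $g$ uniformly with no case split, and it needs only the odd case of the nonexistence statement --- which is precisely what Lemma~\ref{l1}(1) encodes --- rather than the full strength of the cited theorem, which also excludes even orders that are irrelevant here since $d\mid 2g+1$. (Lemma~\ref{l1} is itself quoted from \cite{Zarhin}, so the logical dependence on that paper is not removed, but the heavier external theorem is.) As a minor bonus, your converse correctly computes the value of $y-v(x)$ at $\iota(P)$ as $-2v(a)$, where the paper's text has a typo ($-2v(c)$).
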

\begin{proof}
Suppose that $P=(a,c)$  is a $K$-point on  $\mathcal C$ having order  $2g+1$ in  $J(K)$. Then the divisor
 $(2g+1)(P)-(2g+1)(\infty)$ is principal.  By Lemma \ref{l1}, there exists precisely one polynomial  $v(x)$  with $\deg(v)\le g$ such that
$$\div(y-v(x))=(2g+1)(P)-(2g+1)(\infty).$$
Thus the zero divisor of
 $y-v(x)$ coincides with $(2g+1)(P)$. In particular,  $c=v(a)$. Notice that the point $\iota(P)=(a,-c)$ also has order $2g+1$. The zero divisor of $y+v(x)$ equals $(2g+1)(\iota(P))$. Since $P\neq \iota(P)$, the zero divisor of $$y^2-v^2(x)=f(x)-v^2(x)$$ equals $(2g+1)(P)+(2g+1)(\iota(P))$ while its polar divisor is $2(2g+1)(\infty)$.
This means that the monic degree $ 2g+1 $ polynomial
$f(x)-v^2(x)$ equals $(x-a)^{2g+1}$, which implies that $f(x)=(x-a)^{2g+1}+v^2(x)$.

Conversely, let us consider the  hyperelliptic curve
 $y^2=(x-a)^{2g+1}+v^2(x)$, where $v(x)\in K[x]$ is a polynomial with $\deg(v) \le g$ and
  $v(a)\neq0$.  Let us put   $c=v(a)$ and prove that $P=(a,c)\in \mathcal C(K)$ has order $2g+1$. It follows from
 $y^2-v^2(x)=(x-a)^{2g+1}$ that all zeros of  $y-v(x)$  have abscissa $a$. Clearly,
 $P=(a,c)$ is a zero of  $y-v(x)$, but  $\iota(P)=(a,-c)$ is not one a zero of  $y-v(x)$, because $y-v(x)$ takes the value $-c-v(a)=-2v(c)\ne 0$
 at $\iota(P)$.  This implies that   $y-v(x)$ has exactly one zero, namely $P$. Obviously, $y-v(x)$ has exactly one pole, namely $\infty$,
 and its multiplicity is $2g+1$. It follows that
 $$\div(y-v(x))=(2g+1)(P)-(2g+1)(\infty)=(2g+1)((P)-(\infty)).$$
 This implies that $P$ has finite order $m$ in $J(K)$ and $m$ divides $2g+1$. Clearly, $m$ is neither 1 nor 2.  If $g=1$, then $2g+1=3$ is a prime  divisible by $m$. This implies that $m=3=2g+1$, i.e., $P$ is a torsion point of order $2g+1$.    Now assume that $g>1$.
 By a result of \cite{Zarhin}, $m$ cannot lie between $3$ and $2g$. This implies again that $m=2g+1$, i.e., $P$ is a torsion point of order $2g+1$.

\end{proof}

\begin{example}
\label{exNotDivide}
Suppose that $\fchar(K)$ does {\sl not} divide $2g+1$.  
Then the polynomial $x^{2g+1}+1$ has {\sl no} multiple roots and the odd degree genus $g$ hyperelliptic curve
$$y^2=x^{2g+1}+1$$
contains a torsion point $(0, 1)$ of order $2g+1$ \cite[example 2.10]{Zarhin}.
\end{example}

\begin{example}
\label{exDivide}
Suppose that $\fchar(K)$ divides  $2g+1$.  Choose a {\sl nonzero} $b \in K$.
Then the polynomial $f(x)=x^{2g+1}+(bx+1)^2$ has no multiple roots. Indeed,
$f^{\prime}(x)=2b(bx+1)$. So, if $x_0$ is a root of $f^{\prime}(x)$, then $b x_0+1=0$,
which implies that $x_0 \ne 0$ and
$$f(x_0)=x_0^{2g+1}+(bx_0+1)^2=x_0^{2g+1} \ne 0.$$
This proves that $f(x)$ has {\sl no} multiple roots. Applying Theorem \ref{ord 2g+1} to $a=0$ and $v(x)=bx+1$,
we conclude that the odd degree genus $g$ hyperelliptic curve
$$y^2=x^{2g+1}+(bx+1)^2$$ has  a torsion point $P=(0,1)$ of order $2g+1$.   If we take $b=1$, then we obtain that the odd degree genus $g$ hyperelliptic curve $y^2=x^{2g+1}+(x+1)^2$ has two torsion points $(0,\pm 1)$ of order $2g+1$.
\end{example}

\begin{Rem}
\label{isomOneP}
Let $v(x), w(x)\in K[x]$ be polynomials  whose degrees do not exceed $g$ with
$$v(0)\ne 0, \ w(0)\ne 0$$
 and such that both degree $2g+1$ polynomials
$$f(x)=x^{2g+1}+v^2(x), \ f_1(x)=x^{2g+1}+w^2(x)$$
have no multiple roots. Let us consider the odd degree genus $g$ hyperelliptic curves
$$\mathcal{C}: y^2=x^{2g+1}+v^2(x) \ \text{ and } \ \mathcal{C}_1: y_1^2=x_1^{2g+1}+w^2(x_1)$$
over $K$. By Theorem \ref{ord 2g+1}, $P=(0,v(0))$ is a  torsion point of order $2g+1$ in $\mathcal{C}(K)$
and $P_1=(0,w(0))$ is a torsion point of order $2g+1$ in $\mathcal{C}_1(K)$.
It follows from arguments of Remark
\ref{isoPoint} that if there is a $K$-biregular isomorphism of pointed curves $\phi:\mathcal C\cong \mathcal C_1$
that sends $P$ to $P_1$, then there exists $\lambda\in K^{*}$
such that
$$\phi^{*}x_1=\lambda^2 x, \ \phi^*{y_1}=\lambda^{2g+1} y,$$
$$x^{2g+1}+w^2(x)=f_1(x)=\lambda^{2(2g+1)} \cdot f\left(\frac{x}{\lambda^2}\right)=x^{2g+1}+\lambda^{2(2g+1)}\left(v\left(\frac{x}{\lambda^2}\right)\right)^2.$$
This implies that
$$w(x)=\pm \lambda^{(2g+1)} v\left(\frac{x}{\lambda^2}\right).$$
\end{Rem}

\begin{thm}\label{ordK02g+1}
Let $K_0$ be a subfield of $K$.
Let $g \ge 1$ be an integer and $$f(x)\in K_0[x]\subset K[x]$$ be  a monic degree  $2g+1$ polynomial without multiple roots.

Suppose that  the  hyperelliptic curve
  $C_f: y^2=f(x)$ has a $K_0$-point $P=(a,c)$ of order $2g+1$.
 Then there exists  precisely one polynomial  $v(x)\in K_0[x]$  such that
   $$\deg(v)\le g, \ v(a)=c\neq0, \ f(x)=(x-a)^{2g+1}+v^2(x).$$
\end{thm}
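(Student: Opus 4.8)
The plan is to combine the existence statement already proved over $K$ with a descent of the square root from $K[x]$ to $K_0[x]$. First I would apply Theorem \ref{ord 2g+1} to the $K$-point $P=(a,c)$: since $a\in K_0\subseteq K$, there is a polynomial $v(x)\in K[x]$ with $\deg(v)\le g$, $v(a)=c\ne 0$, and $f(x)=(x-a)^{2g+1}+v^2(x)$. Setting $h(x):=f(x)-(x-a)^{2g+1}$, the facts $f\in K_0[x]$ and $a\in K_0$ give $h\in K_0[x]$, while $h=v^2$ in $K[x]$ exhibits $h$ as a perfect square in $K[x]$; note also $h(a)=c^2\ne 0$, so $h\ne 0$. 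Everything now reduces to showing that this particular square root $v$ already has coefficients in $K_0$, since its degree bound and the value $v(a)=c$ are then inherited automatically.

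The core step is a descent lemma: a nonzero $h\in K_0[x]$ that is a perfect square in $K[x]$, with $\fchar(K)\ne 2$, has the form $h=c_0 s^2$ with $c_0\in K_0^{*}$ its leading coefficient and $s\in K_0[x]$ monic. To prove it I would factor $h=c_0\prod_i p_i(x)^{a_i}$ into distinct monic irreducibles over $K_0$ and examine multiplicities over the algebraically closed field $K$. Each $p_i$ splits over $K$ with all of its roots occurring to one and the same multiplicity $q_i$, equal to $1$ when $p_i$ is separable and to a power of $\fchar(K)$ otherwise; since distinct $p_i$ have disjoint root sets in $K$, a root of $p_i$ appears in $h$ with multiplicity exactly $a_i q_i$. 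As $h=v^2$ is a square, every such multiplicity is even, and because $\fchar(K)\ne 2$ forces $q_i$ to be odd (a power of an odd prime, or $1$), we conclude that every $a_i$ is even. Then $s:=\prod_i p_i^{a_i/2}\in K_0[x]$ is monic with $h=c_0 s^2$.

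It remains to pin down the scalar. From $v^2=c_0 s^2$ in the UFD $K[x]$ with $s$ monic we get $s\mid v$, hence $v=\mu s$ with $\mu\in K$ and $\mu^2=c_0$. Evaluating at $a$ and using $h(a)=c_0 s(a)^2=c^2\ne 0$ shows $s(a)\ne 0$, so $\mu=c/s(a)$; since $c,s(a)\in K_0$, this gives $\mu\in K_0$ and therefore $v=\mu s\in K_0[x]$, establishing existence. Uniqueness is then immediate: any $v_1\in K_0[x]$ with $\deg(v_1)\le g$, $v_1(a)=c$, and $f=(x-a)^{2g+1}+v_1^2$ satisfies $v_1^2=h=v^2$, so $v_1=\pm v$, and $v_1(a)=c=v(a)\ne 0$ forces $v_1=v$.

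I expect the main obstacle to be precisely this descent in positive characteristic with $K_0$ possibly imperfect. A naive Galois-descent argument—applying $\sigma\in\mathrm{Aut}(K/K_0)$ to the identity $\div(y-v(x))=(2g+1)(P)-(2g+1)(\infty)$ and invoking the uniqueness in Lemma \ref{l1} to get $v^{\sigma}=v$—only places $v$ in the fixed field of $\mathrm{Aut}(K/K_0)$, which can be strictly larger than $K_0$ when $K_0$ is imperfect. The factorization argument above makes the descent unconditional, and it is exactly here that the hypothesis $\fchar(K)\ne 2$, ruling out even root-multiplicities coming from inseparability, does the essential work.
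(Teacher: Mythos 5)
Your proof is correct, and your descent step takes a genuinely different route from the paper's. After the common first move (applying Theorem \ref{ord 2g+1} over $K$ to get $v\in K[x]$ with $v^2=f(x)-(x-a)^{2g+1}\in K_0[x]$), the paper normalizes $w=v/c$ (using $c\in K_0$), translates so that $\tilde{w}(0)=1$, and proves $\tilde{w}\in K_0[x]$ by a direct induction on coefficients: from $\tilde{w}^2\in K_0[x]$ each relation $b_k=2a_k+B_k$, with $B_k$ a polynomial in earlier coefficients, yields $a_k\in K_0$ because $\fchar(K)\ne 2$. You instead prove a structural square-root descent lemma via factorization into $K_0$-irreducibles: every irreducible factor $p_i$ contributes roots in $K$ all of one common multiplicity $q_i$ (its inseparability degree, hence $1$ or a power of the odd characteristic), distinct $p_i$ have disjoint root sets, so evenness of all root multiplicities of $h=v^2$ forces every exponent $a_i$ to be even; you then remove the residual scalar ambiguity $\mu^2=c_0$ by evaluating at $a$, getting $\mu=c/s(a)\in K_0$ — a step the paper's normalization $\tilde{w}(0)=1$ handles implicitly. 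Both arguments are unconditional over imperfect $K_0$, and your closing observation is on point: naive Galois descent from the uniqueness in Lemma \ref{l1} only places $v$ in the fixed field of $\mathrm{Aut}(K/K_0)$, which contains the purely inseparable closure of $K_0$, so it would indeed fail to finish the job in positive characteristic. The trade-off: the paper's coefficient induction is more elementary and self-contained, never mentioning separability; your lemma is more conceptual and reusable, and it isolates exactly where $\fchar(K)\ne 2$ enters — even root multiplicities cannot be produced by inseparability, since inseparability degrees are odd. Your uniqueness argument ($v_1^2=v^2$ gives $v_1=\pm v$, with the sign fixed by $v_1(a)=c\ne 0$) is sound and matches the paper's in substance.
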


\begin{proof}
It follows from Theorem \ref{ord 2g+1} and its proof that
there exists  a polynomial  $v(x)\in K[x]$  such that
   $$\deg(v)\le g, \ v(a)=c\neq0, \ f(x)=(x-a)^{2g+1}+v^2(x).$$
   Since $f(x) \in K_0[x]$,  we get $v^2(x)\in K_0[x]$.
   This implies that the polynomial $w(x)=v(x)/c$ satisfies
    $$w(a)=1, \ w^2(x)\in K_0[x].$$
  If we put $\tilde{w}(x)=w(x+a)\in K[x]$, then
    $$\tilde{w}(0)=1,   \tilde{w}^2(x)\in K_0[x], \ w(x)=\tilde{w}(x-a), \ v(x)=c \cdot \tilde{w}(x-a).$$
    Hence, in order to prove that $v(x)\in K_0[x]$, it suffices to check that the polynomial
    $\tilde{w}(x)$ lies in $K_0[x]$.
 Let us do it.

 Let  $m:=\deg(\tilde{w})$. If $m=0$, then
    $\tilde{w}(x)=\tilde{w}(0)=1 \in K_0[x]$. Assume now that $m\ge 1$ and
    $$\tilde{w}(x)=1+\sum_{k=1}^m a_k x^k \in K[x],   \ \tilde{w}^2(x)=1+\sum_{k=1}^{2m} b_k x^k \in K_0[x].$$
    We know that all $b_k \in K_0$ and need to prove that all $a_k\in K_0$. We use induction on $k$.   First,
      $b_1=2 a_1$. Since  $\fchar(K)\ne 2$, we have $a_1 \in K_0$, and the first step of induction is done.
     (Notice that we have also proven that $\tilde{w}(x)\in K_0[x]$ if $m\le 1$.)
    Now assume that $k>1$ (and therefore $m \ge k>1$) and $a_i\in K_0$ for all $i <k$. Then
    $$b_k=1\cdot a_k+a_k \cdot 1 + B_k,  \ \text{ where } B_k= \sum_{1\le i,j\le k-1, i+j=k} a_i a_j.$$
    By induction assumption, all $a_i$ and $a_j$ with $1\le i,j\le k-1$ lie in $K_0$. This implies that $B_k \in K_0$.
    Since $b_k=a_k+a_k+B_k$ lies in $K_0$, we have $2a_k \in K_0$ and therefore $a_k \in K_0$. This ends the proof.
\end{proof}

\begin{Rem}
\label{exist2gp1}
Let $K_0$ be a subfield of $K$ and $g$ a positive integer. It follows from Examples \ref{exNotDivide} and \ref{exDivide} that there is a degree $2g+1$ monic polynomial $f(x)\in K_0[x]$ without multiple roots such that the odd degree genus $g$ hyperelliptic curve
$\mathcal{C}_f:y^2=f(x)$ defined over $K_0$ has a torsion point of order $2g+1$ in $\mathcal{C}_f(K_0)$.
\end{Rem}

\begin{thm}\label{PordK02g+1}
Let $K_0$ be a subfield of $K$.
Let $g \ge 1$ be an integer and $f(x)\in K_0[x]$ be  a monic degree  $2g+1$ polynomial without multiple roots.
Suppose that  the odd degree genus $g$  hyperelliptic curve
  $C_f: y^2=f(x)$ over $K_0$ has  $K_0$-points $P=(a_1,c_1)$ and $Q=(a_2,c_2)$ of order $2g+1$ such that  $Q\ne P, \iota(P)$, i.e.,
  $$a_i,c_i \in K_0,\ c_i^2=f(a_i) \ \text{ for } \ i=1,2, \ a_1 \ne a_2.$$
 Then there exists  precisely one ordered pair of polynomials  $u_1(x), u_2(x)\in K_0[x]$ such that the following conditions hold.

 \begin{itemize}
 \item[(i)]
 $\deg(u_i)\le g$ for $i=1,2$.
 \item[(ii)]
$u_1(x)u_2(x)=(x-a_2)^{2g+1}-(x-a_1)^{2g+1}.$
 \item[(iii)]
 If $\fchar(K_0)$ does not divide $2g+1$, then
 $\deg(u_1)=\deg(u_2)=g.$
 \item[(iv)]
 $u_1(a_1)+u_2(a_1) \ne 0, \  u_1(a_2)-u_2(a_2) \ne 0$.
 In particular, $u_2(x)\ne \pm u_1(x)$.
  \item[(v)]
 $$f(x)=(x-a_1)^{2g+1}+\left(\frac{u_1(x)+u_2(x)}{2}\right)^2=(x-a_2)^{2g+1}+\left(\frac{u_1(x)-u_2(x)}{2}\right)^2.$$
 \item[(vi)]
 $$P=\left(a_1,\frac{u_1(a_1)+u_2(a_1)}{2}\right),
 \ Q=\left(a_2,\frac{u_1(a_1)-u_2(a_2)}{2}\right).$$
 \end{itemize}
\end{thm}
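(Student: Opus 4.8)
The plan is to reduce the whole statement to Theorem \ref{ordK02g+1}, applied separately to each of the two torsion points $P$ and $Q$, and then to repackage the two resulting polynomials into the symmetric pair $(u_1,u_2)$.

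First I would apply Theorem \ref{ordK02g+1} to $P=(a_1,c_1)$ to produce a unique $v_1(x)\in K_0[x]$ with $\deg(v_1)\le g$, $v_1(a_1)=c_1\ne 0$, and $f(x)=(x-a_1)^{2g+1}+v_1^2(x)$; applying it to $Q=(a_2,c_2)$ yields a unique $v_2(x)\in K_0[x]$ with $\deg(v_2)\le g$, $v_2(a_2)=c_2\ne 0$, and $f(x)=(x-a_2)^{2g+1}+v_2^2(x)$. I then set $u_1:=v_1+v_2$ and $u_2:=v_1-v_2$ in $K_0[x]$; since $\fchar(K_0)=\fchar(K)\ne 2$, this is equivalent to $v_1=(u_1+u_2)/2$ and $v_2=(u_1-u_2)/2$.

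Next I would verify the six conditions. Condition (i) is immediate from $\deg(v_i)\le g$. For (ii), subtracting the two expressions for $f$ gives $v_1^2-v_2^2=(x-a_2)^{2g+1}-(x-a_1)^{2g+1}$, whose left side factors as $(v_1+v_2)(v_1-v_2)=u_1u_2$. Conditions (v) and (vi) are just the two formulas for $f$ and the identities $v_1(a_1)=c_1$, $v_2(a_2)=c_2$ rewritten in terms of $u_1,u_2$ (the abscissa-$a_2$ entry in (vi) should read $u_1(a_2)$). For (iv), $v_1(a_1)=c_1\ne 0$ forces $u_1(a_1)+u_2(a_1)\ne 0$, and $v_2(a_2)=c_2\ne 0$ forces $u_1(a_2)-u_2(a_2)\ne 0$; moreover $u_2=u_1$ would give $v_2\equiv 0$ and $u_2=-u_1$ would give $v_1\equiv 0$, each contradicting $c_2\ne 0$ or $c_1\ne 0$. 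For (iii) I would expand $(x-a_2)^{2g+1}-(x-a_1)^{2g+1}$: the $x^{2g+1}$ terms cancel and the coefficient of $x^{2g}$ equals $(2g+1)(a_1-a_2)$, which is nonzero when $\fchar(K_0)$ does not divide $2g+1$ because $a_1\ne a_2$. Hence $\deg(u_1u_2)=2g$, and with $\deg(u_i)\le g$ this forces $\deg(u_1)=\deg(u_2)=g$.

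For uniqueness, suppose $(u_1,u_2)$ is any pair satisfying (i)--(vi). Condition (v) gives $\bigl((u_1+u_2)/2\bigr)^2=f-(x-a_1)^{2g+1}=v_1^2$, so in the polynomial ring $K_0[x]$ we get $(u_1+u_2)/2=\pm v_1$; condition (vi) pins the value at $a_1$ to $c_1=v_1(a_1)\ne 0$, which selects the plus sign, so $(u_1+u_2)/2=v_1$, and the symmetric argument at $a_2$ gives $(u_1-u_2)/2=v_2$. Therefore $u_1=v_1+v_2$ and $u_2=v_1-v_2$ are forced, proving uniqueness. The one delicate point is precisely this sign bookkeeping: from $w^2=v_1^2$ one only deduces $w=\pm v_1$, and it is essential that the nonvanishing built into (iv)/(vi), equivalently the hypothesis $c_1,c_2\ne 0$ coming from $P\ne\iota(P)$, fixes the correct sign; everything else is routine polynomial algebra resting on the uniqueness already furnished by Theorem \ref{ordK02g+1}.
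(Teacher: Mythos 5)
Your proof is correct and follows essentially the same route as the paper's: both apply Theorem \ref{ordK02g+1} separately at $P$ and $Q$ to obtain the unique $v_1,v_2\in K_0[x]$, set $u_1=v_1+v_2$, $u_2=v_1-v_2$, verify (i)--(vi) by the same degree count (leading coefficient $(2g+1)(a_1-a_2)$ of $(x-a_2)^{2g+1}-(x-a_1)^{2g+1}$) and nonvanishing arguments, and settle uniqueness by the same sign-fixing observation that $u_1\pm u_2$ are determined up to sign by (v) and pinned down by (iv) and (vi). Your correction of the typo in (vi) (the second ordinate should involve $u_1(a_2)$, not $u_1(a_1)$) is also accurate.
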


\begin{proof}
It follows from Theorem \ref{ordK02g+1} that there exists precisely one pair of polynomials
$v_1(x), v_2(x)\in K_0[x]$  such that for $i=1,2$ we have
   $$\deg(v_i)\le g, \ v_i(a_i)\neq0, \ f(x)=(x-a_i)^{2g+1}+v_i^2(x),  \ P_i=(a_i, v_i(a_i)). $$
We obtain
$$0=\left((x-a_2)^{2g+1}+v_2^2(x)\right)-\left((x-a_1)^{2g+1}+v_1^2(x)\right),$$
i.e.,
$$(x-a_2)^{2g+1}-(x-a_1)^{2g+1}=v_1(x)^2 -v_2^2(x)=(v_1(x)+v_2(x))(v_1(x)-v_2(x)).$$
Let us put
$$u_1(x):=v_1(x)+v_2(x), \  u_2(x):=v_1(x)-v_2(x).$$
Then
 $$u_1(x)u_2(x)=(x-a_2)^{2g+1}-(x-a_1)^{2g+1},$$
 which gives us (ii).
 Clearly,
$$v_1(x)=\frac{u_1(x)+u_2(x)}{2}, \  v_2(x)=\frac{u_1(x)-u_2(x)}{2}.$$
This implies that
$$u_1(a_1)+u_2(a_1) \ne 0, \  u_1(a_1)-u_2(a_1) \ne 0, \ \deg(u_i)\le g \
\text{ for}\ i=1,2,$$
which gives us (iv) and (i), and
 $$f(x)=(x-a_1)^{2g+1}+\left(\frac{u_1(x)+u_2(x)}{2}\right)^2=(x-a_2)^{2g+1}+\left(\frac{u_1(x)-u_2(x)}{2}\right)^2,$$
 which gives us (v).

 We have
 $$\begin{aligned} P=(a_1, v_1(a_1))=\left(a_1,\frac{u_1(a_1)+u_2(a_1)}{2}\right),\\
Q=(a_2, v_2(a_2))=\left(a_2,\frac{u_1(a_2)-u_2(a_2)}{2}\right),\end{aligned}$$
which gives us (vi).

If $\fchar(K_0)$ does not divide $2g+1$, then the polynomial $(x-a_2)^{2g+1}-(x-a_1)^{2g+1}$ has degree $2g$ (and leading coefficient $(2g+1)(a_1-a_2)$), and therefore
$$2g=\deg(u_1)+\deg(u_2).$$
Since both $\deg(u_1),\deg(u_2)\le g$, we conclude that
 $$\deg(u_1)=\deg(u_2)=g,$$
 which gives us (iii).

 It remains to prove the uniqueness of $u_1(x),u_2(x)$. It follows from (v)   that both polynomials
 $u_1(x)+u_2(x)$ and $u_1(x)-u_2(x)$ are defined up to sign. However, (iv) and (vi) determine  $u_1(x)+u_2(x)$ and $u_1(x)-u_2(x)$  uniquely. This implies the uniqueness of  $u_1(x),u_2(x)$.
\end{proof}

\begin{Rem}
\label{diff}
Let $a_1, a_2$ be distinct elements of $K$.
Let
$p:=\fchar(K)$,
  and let $x_0\in K$ be a root of $(x-a_2)^{2g+1}-(x-a_1)^{2g+1}$
Since $a_1 \ne a_2$,  we get $x_0 \ne a_1$ and $x_0 \ne 0$, i.e.,
$$(x_0-a_2)^{2g}\ne 0, \ (x_0-a_1)^{2g}\ne 0.$$
 We have
 $$\left((x-a_2)^{2g+1}-(x-a_1)^{2g+1}\right)^{\prime}=(2g+1)(x-a_2)^{2g}-(2g+1)(x-a_1)^{2g}=$$
 $$(2g+1)\left((x-a_2)^{2g}-(x-a_1)^{2g}\right).$$
 In particular,  if $p$ divides $2g+1$, then $p>2$,  $$\left((x-a_2)^{2g+1}-(x-a_1)^{2g+1}\right)^{\prime}=0,$$ and
 $$(x-a_2)^{2g+1}-(x-a_1)^{2g+1}=\left((x-a_2)^{(2g+1)/p}-(x-a_1)^{(2g+1)/p}\right)^{p};$$
 in particular, all roots of $(x-a_2)^{2g+1}-(x-a_1)^{2g+1}$, including $x_0$, are multiple.
 Now suppose that $\fchar(K)$ does not divide  $2g+1$.  Then
$$\left((x-a_2)^{2g+1}-(x-a_1)^{2g+1}\right)^{\prime}\ne 0.$$
 Assume additionally that $x_0$ is a multiple root of $(x-a_2)^{2g+1}-(x-a_1)^{2g+1}$. This means that
 $$(x_0-a_2)^{2g+1}=(x_0-a_1)^{2g+1}, \ (x_0-a_2)^{2g}=(x_0-a_1)^{2g}.$$
 Dividing the first equality by the second one, we get
  $$x_0-a_2 =x_0-a_1,$$
  and therefore $a_1=a_2$, which is not the case. The obtained contradiction proves that if  $\fchar(K)$ does not divide $2g+1$, then the polynomial
  $(x-a_2)^{2g+1}-(x-a_1)^{2g+1}$ has no multiple roots.
 \end{Rem}

\begin{thm}\label{ordKf2g+1}
Let $K_0$ be a subfield of $K$ and  $g \ge 1$ be an integer.  Let $a_1$ and $a_2$ be distinct elements of $K_0$. Let $u_1(x), u_2(x)\in K_0[x]$ be polynomials such that
$ \deg(u_i)\le g \ \text{ for } \ i=1,2$ and  $$\ u_1(x)u_2(x)=(x-a_2)^{2g+1}-(x-a_1)^{2g+1}.$$
Assume additionally that if $\fchar(K)$ does not divide $2g+1$, then
  $\deg(u_1)=\deg(u_2)=~g.$
 Let us consider the monic degree $2g+1$ polynomial
 $$f_{a_1,a_2;u_1,u_2}(x)=(x-a_1)^{2g+1}+\left(\frac{u_1(x)+u_2(x)}{2}\right)^2.$$
 Then the following conditions hold.

 \begin{itemize}
 \item[(a)]
 $$f_{a_1,a_2;u_1,u_2}(x)=(x-a_2)^{2g+1}+\left(\frac{u_1(x)-u_2(x)}{2}\right)^2=f_{a_2,a_1;u_1,-u_2}(x).$$
 \item[(b)]
 $$f_{a_1,a_2;u_1,u_2}(x+a_1)=f_{0,a;\tilde{u}_1,\tilde{u}_2}(x)=x^{2g+1}+\left(\frac{\tilde{u}_1(x)+\tilde{u}_2(x)}{2}\right)^2,$$
 where
 $$a:=a_2-a_1\in K^{*}_0, \ \tilde{u}_1(x):=u_1(x+a_1)\in K_0[x],\ \tilde{u}_2(x)=u_2(x+a_1)\in K_0[x].$$
 In addition,
 $$  \tilde{u}_1(x) \tilde{u}_2(x)=(x-a)^{2g+1}-x^{2g+1}.$$

 \item[(c)] Suppose that $f_{a_1,a_2;u_1,u_2}(x)$ has no multiple roots. Then
 \begin{enumerate}
 \item[(c1)]
 $u_1^{\prime}(x)\ne 0, \ u_2^{\prime}(x) \ne 0.$
 In particular, neither $u_1(x)$ nor $u_2(x)$ is a constant.
  \item[(c2)]
 Let us consider the odd degree
 genus $g$ hyperelliptic curve
 $$\mathcal{C}_{a_1,a_2;u_1,u_2}:=\mathcal{C}_{f_{a_1,a_2;u_1,u_2}}: y^2=f_{a_1,a_2;u_1,u_2}(x),$$
 which  is defined over $K_0$.
 Then
  $$P_{a_1,a_2;u_1,u_2}=\left(a_1,\frac{u_1(a_1)+u_2(a_1)}{2}\right)\ \text{ and }
 \ Q_{a_1,a_2;u_1,u_2}=\left(a_2,\frac{u_1(a_2)-u_2(a_2)}{2}\right)$$
 are points of order $2g+1$ in $\mathcal{C}_{a_1,a_2;u_1,u_2}(K_0)$.
 \end{enumerate}
 \end{itemize}
 \end{thm}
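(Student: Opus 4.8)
The plan is to treat (a) and (b) as formal identities, to deduce (c2) from Theorem \ref{ord 2g+1} once a nonvanishing is in hand, and to locate the real work in (c1).

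For (a), I would expand the difference of the two proposed expressions for $f_{a_1,a_2;u_1,u_2}$. The square terms contribute $\left(\frac{u_1+u_2}{2}\right)^2 - \left(\frac{u_1-u_2}{2}\right)^2 = u_1 u_2$, and by hypothesis $u_1 u_2 = (x-a_2)^{2g+1} - (x-a_1)^{2g+1}$, so the difference telescopes to $0$. The identification with $f_{a_2,a_1;u_1,-u_2}$ is then immediate from the definition, once one checks that the tuple $(a_2,a_1;u_1,-u_2)$ satisfies the same hypotheses: its product condition reads $u_1\cdot(-u_2)=(x-a_1)^{2g+1}-(x-a_2)^{2g+1}$, which is $-u_1u_2$, and the degree conditions are symmetric. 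Part (b) is the substitution $x\mapsto x+a_1$: plugging in gives $f_{a_1,a_2;u_1,u_2}(x+a_1)=x^{2g+1}+\left(\frac{\tilde{u}_1+\tilde{u}_2}{2}\right)^2$, and substituting $x+a_1$ into the product relation yields $\tilde{u}_1\tilde{u}_2=(x-a)^{2g+1}-x^{2g+1}$, which is exactly the data defining $f_{0,a;\tilde{u}_1,\tilde{u}_2}$.

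For (c2), set $v_1=\frac{u_1+u_2}{2}$ and $v_2=\frac{u_1-u_2}{2}$, so that by (a) there are two representations $f=(x-a_1)^{2g+1}+v_1^2=(x-a_2)^{2g+1}+v_2^2$ with $\deg v_i\le g$ and $f$ monic of degree $2g+1$. Evaluating the first at $a_1$ gives $f(a_1)=v_1(a_1)^2$ and $f'(a_1)=2v_1(a_1)v_1'(a_1)$; hence if $v_1(a_1)=0$ then $a_1$ is a common zero of $f$ and $f'$, i.e.\ a multiple root, contradicting the hypothesis. Thus $v_1(a_1)\ne 0$, and the same argument at $a_2$ via the second representation gives $v_2(a_2)\ne 0$. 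Theorem \ref{ord 2g+1} applied to each representation then shows that $P=(a_1,v_1(a_1))$ and $Q=(a_2,v_2(a_2))$ have order $2g+1$; these match the claimed points, and since all coordinates lie in $K_0$ they are $K_0$-points.

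The main obstacle is (c1). Because $f_{a_1,a_2;u_1,u_2}$ depends on $u_1,u_2$ only through $u_1+u_2$ while the other hypotheses are symmetric in $u_1,u_2$, the statement ``$u_1'\ne 0$'' applied to the (equally valid) tuple $(a_1,a_2,u_2,u_1)$ yields ``$u_2'\ne 0$''; so it suffices to exclude $u_1'=0$. If $\fchar(K)\nmid 2g+1$, then by hypothesis $\deg u_1=g\ge 1$: in characteristic $0$ this already forbids $u_1'=0$, while in characteristic $p$ one has $u_1=\psi^p$ (Frobenius, $K$ algebraically closed), so every root of $u_1$, hence of $u_1u_2=(x-a_2)^{2g+1}-(x-a_1)^{2g+1}$, is multiple, contradicting Remark \ref{diff}. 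The genuinely delicate case is $p=\fchar(K)\mid 2g+1$, where $2g+1$ vanishes in $K$ and Remark \ref{diff} no longer gives separability. Here I would write $(x-a_2)^{2g+1}-(x-a_1)^{2g+1}=h^p$ with $h=(x-a_2)^{(2g+1)/p}-(x-a_1)^{(2g+1)/p}$, note that $u_1'=0$ gives $u_1=\psi^p$, use the unique-factorization fact $\psi^p\mid h^p\Rightarrow\psi\mid h$ to get $u_2=(h/\psi)^p$ and hence $u_2'=0$ as well, and finally observe $f'=(2g+1)(x-a_1)^{2g}+2v_1v_1'=0$ since $2g+1=0$ in $K$ and $v_1'=\frac{u_1'+u_2'}{2}=0$. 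But $f'\equiv 0$ forces every root of the degree-$(2g+1)$ polynomial $f$ to be multiple, contradicting the hypothesis; this closes (c1).
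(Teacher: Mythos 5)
Your proposal is correct and follows essentially the same route as the paper: (a) and (b) by the same algebraic manipulations; (c1) by reducing to the case $p=\fchar(K)\mid 2g+1$, writing $u_1=\psi^p$ and $u_2=(h/\psi)^p$ via unique factorization, and contradicting the squarefreeness of $f$ (you conclude via $f'\equiv 0$, the paper via an explicit expression of $f$ as a $p$th power --- the same fact over a perfect field); and (c2) by applying the order-$(2g+1)$ criterion of Theorem \ref{ord 2g+1} to the two representations of $f$ coming from (a). A small bonus of your write-up: you explicitly verify the nonvanishing of the ordinates $v_1(a_1)\ne 0$ and $v_2(a_2)\ne 0$ (needed to invoke that criterion) by the multiple-root argument at $a_1$ and $a_2$, a point the paper's proof asserts only in passing.
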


 \begin{proof}
 $$f_{a_1,a_2;u_1,u_2}(x)-\left((x-a_2)^{2g+1}+\left(\frac{u_1(x)-u_2(x)}{2}\right)^2\right)$$
$$= (x-a_1)^{2g+1}+\left(\frac{u_1(x)+u_2(x)}{2}\right)^2-(x-a_2)^{2g+1}-\left(\frac{u_1(x)-u_2(x)}{2}\right)^2$$
 $$=\left(\frac{u_1(x)+u_2(x)}{2}\right)^2-\left(\frac{u_1(x)-u_2(x)}{2}\right)^2
 +\left((x-a_1)^{2g+1}-(x-a_2)^{2g+1}\right)$$
 $$=u_1(x)u_2(x)-u_1(x)u_2(x)=0.$$
 This proves (a).

 Let us prove (b). Clearly,  $\deg(u(x+a_1))=\deg(u(x))$ for every polynomial $u(x)\in K[x]$. This implies that
 $\deg(\tilde{u}_1)=\deg(u_1), \deg(\tilde{u}_2)=\deg(u_2)$. It follows that  $\deg(\tilde{u}_1)=\deg(\tilde{u}_2)=g$
 if $\deg(u_1)=\deg(u_2)=g$. We have
 $$\begin{aligned}(x-a)^{2g+1}-x^{2g+1}&=( (x+a_1)-a_2)^{2g+1}-((x+a_1)-a_1)^{2g+1}\\&=u_1(x+a_1) u_2(x+a_1)=\tilde{u}_1(x) \tilde{u}_2(x).\end{aligned}$$
Finally,
  $$\begin{aligned}f_{a_1,a_2;u_1,u_2}(x+a_1)&=
  ((x-a_1)+a_1)^{2g+1}+\left(\frac{u_1(x+a_1)+u_2(x+a_1)}{2}\right)^2\\&=
 (x-0)^{2g+1}+
  \left(\frac{\tilde{u}_1(x)+\tilde{u}_2(x)}{2}\right)^2=f_{0,a;\tilde{u}_1,\tilde{u}_2}(x).\end{aligned}$$

 Let us prove (c1).  We put
$p:=\fchar(K).$
Assume that, say, $u_1^{\prime}(x)=0$. We need to arrive to a contradiction. Under our assumption one of the following condition holds.
 \begin{enumerate}
 \item[(i)]
  $u_1 (x)$ is a nonzero constant, i.e., $\deg(u_1)=0<g$.
 This implies that
  $\fchar(K)$ is a prime dividing $2g+1$.
  \item[(ii)]
  $p$ is a prime and there exists a polynomial $w_1(x)\in K[x]$ such that $ u_1(x)=w_1^p(x)$.
  \end{enumerate}

  Clearly, in both cases $p$ is a prime dividing $2g+1$,
  and there exists a polynomial $w_1(x)\in K[x]$ such that $ u_1(x)=w_1^p(x)$.  We have
  $$w_1^p(x) u_2(x)=u_1(x)u_2(x)=(x-a_2)^{2g+1}-(x-a_1)^{2g+1}=$$
  $$\left((x-a_2)^{(2g+1)/p}-(x-a_1)^{(2g+1)/p}\right)^{p}.$$
It follows that $w_1(x)$ divides $(x-a_2)^{(2g+1)/p}-(x-a_1)^{(2g+1)/p}$ in $K[x]$, i.e., there exists a polynomial $w_1(x)\in K[x]$ such that
  $$w_1(x)w_2(x)=(x-a_2)^{(2g+1)/p}-(x-a_1)^{(2g+1)/p},$$
  and therefore
  $$(x-a_2)^{2g+1}-(x-a_1)^{2g+1}=\left(w_1(x)w_2(x)\right)^{p}=w_1^p(x) w_2^p(x)=u_1(x) w_2^p(x).$$
 We have $u_2(x)=w_2^p(x).$ Consequently,
  $$\begin{aligned}f_{a_1,a_2;u_1,u_2}(x)=(x-a_1)^{2g+1}+\left(\frac{w_1^p(x)+w_2^p(x)}{2}\right)^2\\=
  \left((x-a_1)^{(2g+1)/p}+\left(\frac{w_1(x)+w_2(x)}{\sqrt[p]{2}}\right)^2\right)^p.\end{aligned}$$
  Hence $f_{a_1,a_2;u_1,u_2}(x)$ is a $p$th power in $K[x]$, and therefore all its roots are multiple,
  which contradicts our assumptions. Therefore, $u_1^{\prime}(x)\ne 0$. By the same token, $u_2^{\prime}(x)\ne 0$.
  This ends the proof of (c1).

 In order to prove (c2), notice that, from the very definition of $f_{a_1,a_2;u_1,u_2}(x)$, it follows that $P_{a_1,a_2;u_1,u_2}$  lies on
 $\mathcal{C}_{a_1,a_2;u_1,u_2}$. The fact that $Q_{a_1,a_2;u_1,u_2}$ lies on $\mathcal{C}_{a_1,a_2;u_1,u_2}$ follows from (a).  Applying  two times Theorem \ref{ordK02g+1} to $a=a_1, v(x)=(u_1(x)+u_2(x))/2$ and to $a=a_2, v(x)=(u_1(x)-u_2(x))/2$, we conclude that both $P_{a_1,a_2;u_1,u_2}$ and $Q_{a_1,a_2;u_1,u_2}$ are points of order $2g+1$ in
 $\mathcal{C}_{a_1,a_2;u_1,u_2}(K_0)\subset \mathcal{C}_{a_1,a_2;u_1,u_2}(K)$. In addition,
 $$\frac{u_1(a_1)+u_2(a_1)}{2} \ne 0, \  \frac{u_1(a_2)-u_2(a_2)}{2} \ne 0,$$
 i.e.,
 $u_1(a_1)+u_2(a_1) \ne 0, \  u_1(a_2)-u_2(a_2) \ne 0$.
 \end{proof}

 \begin{Rem}
 \label{produ1u2}
 Let $a_1,a_2$ be distinct elements of  a subfield $K_0\subset K$ and let
 $u_1(x), u_2(x)\in K_0[x]$ be polynomials that satisfy $$u_1(x)u_2(x)=(x-a_2)^{2g+1}-(x-a_1)^{2g+1}.$$ Then
 $$u_1(a_1)u_2(a_1)=(a_1-a_2)^{2g+1}-(a_1-a_1)^{2g+1}=(a_1-a_2)^{2g+1} \ne 0,$$
 $$u_1(a_2)u_2(a_2)=(a_2-a_2)^{2g+1}-(a_2-a_1)^{2g+1}=(a_1-a_2)^{2g+1} \ne 0.$$
 In particular,
 $$u_1(a_1)\ne 0, \ u_2(a_1)\ne 0,\ u_1(a_2)\ne 0,\ u_2(a_2)\ne 0.$$

 \end{Rem}

 \begin{Rem}\label{changesign}
 Let $a_1,a_2$ be distinct elements of  a subfield $K_0\subset K$, and let
 $u_1(x), u_2(x)\in K_0[x]$ be polynomials that satisfy $$u_1(x)u_2(x)=(x-a_2)^{2g+1}-(x-a_1)^{2g+1}.$$ Then $-u_1(x),-u_2(x) \in K_0[x]$ and
 $$\begin{aligned} (x-a_2)^{2g+1}-(x-a_1)^{2g+1}=(-u_1(x))(-u_2(x))\\=u_2(x)u_1(x)=(-u_2(x))(-u_1(x)).\end{aligned}$$
  Assume additionally that $\deg(u_1)\le g, \deg(u_2) \le g$, and the equalities hold if $\fchar(K)$ does not divide $2g+1$. Then
 $$f_{a_1,a_2;u_1,u_2}(x)=f_{a_1,a_2;-u_1,-u_2}(x)=f_{a_1,a_2;u_2,u_1}(x)=f_{a_1,a_2;-u_2,-u_1}(x).$$
 If, in addition, $f_{a_1,a_2;u_1,u_2}(x)$ has no multiple roots, then
 $$\mathcal{C}_{a_1,a_2;u_1,u_2}=\mathcal{C}_{a_1,a_2;-u_1,-u_2}=\mathcal{C}_{a_1,a_2;u_2,u_1}=\mathcal{C}_{a_1,a_2;-u_2,-u_1}.$$
 So, in all four cases we get the same odd degree hyperelliptic curve.
 However, it follows readily from Theorem \ref{ordKf2g+1}(c1) that
 $$P_{a_1,a_2;-u_1,-u_2}=\iota(P_{a_1,a_2;u_1,u_2}), \
 Q_{a_1,a_2;-u_1,-u_2}=\iota(Q_{a_1,a_2;u_1,u_2}),$$
 $$P_{a_1,a_2;u_2,u_1}=P_{a_1,a_2;u_1,u_2}, \ Q_{a_1,a_2;u_2,u_1}=\iota(Q_{a_1,a_2;u_1,u_2}),$$
  $$P_{a_1,a_2;-u_2,-u_1}=\iota(P_{a_1,a_2;u_1,u_2}), \ Q_{a_1,a_2;u_2,u_1}=Q_{a_1,a_2;u_1,u_2}.$$
 \end{Rem}

 \begin{Rem}\label{unique}
 Let $a_1,a_2$ be distinct elements of  a subfield $K_0\subset K$, and let
 $u_1(x)$, $u_2(x)$, $\tilde{u}_1(x)$, $\tilde{u}_2(x)\in K_0[x]$ be polynomials that satisfy
 $$u_1(x)u_2(x)=(x-a_2)^{2g+1}-(x-a_1)^{2g+1}=\tilde{u}_1(x) \tilde{u}_2(x).$$
  Let us assume  that $\deg(u_1)\le g, \deg(u_2) \le g$. In addition, we also assume that the equalities hold if $\fchar(K)$ does not divide $2g+1$.

 Suppose that
 $$f_{a_1,a_2;u_1,u_2}(x)=f_{a_1,a_2;\tilde{u}_1,\tilde{u}_2}(x),$$
  i.e.,
 $$(x-a_1)^{2g+1}+\left(\frac{u_1(x)+u_2(x)}{2}\right)^2=(x-a_1)^{2g+1}+\left(\frac{\tilde{u}_1(x)+\tilde{u}_2(x)}{2}\right)^2.$$
 This means that
 $$\left(\frac{u_1(x)+u_2(x)}{2}\right)^2=\left(\frac{\tilde{u}_1(x)+\tilde{u}_2(x)}{2}\right)^2,$$
 i.e.,
 $$\tilde{u}_1(x)+\tilde{u}_2(x)= \pm \left(u_1(x)+u_2(x)\right).$$
 Since
 $$u_1(x)u_2(x)=\tilde{u}_1(x) \tilde{u}_2(x)=(-u_1(x)(-u_2(x)),$$
 we conclude that  one of the following four conditions holds.
 \begin{itemize}
 \item
$ \tilde{u}_1(x)=u_1(x), \tilde{u}_2(x)=u_2(x)$;
 \item
$ \tilde{u}_1(x)=-u_1(x), \tilde{u}_2(x)=-u_2(x)$;
\item
$ \tilde{u}_1(x)=u_2(x), \tilde{u}_2(x)=u_1(x)$;
 \item
$ \tilde{u}_1(x)=-u_2(x), \tilde{u}_2(x)=-u_1(x)$.
 \end{itemize}
 \end{Rem}

 \begin{thm}\label{char2gPlus1}
 Let $p=\fchar(K)$ be an odd prime, and let $g$ be a positive integer such that $2g+1=p^k$  for a positive integer $k$. (E.g., $g=(p-1)/2$.)
 Let  $f(x)\in K[x]$ be  a monic degree  $2g+1$ polynomial without multiple roots and $\mathcal{C}_f: y^2=f(x)$ be the corresponding odd degree genus $g$ hyperelliptic curve.
Then $\mathcal{C}_f(K)$ contains  at most  two points of order $p^k$.
 \end{thm}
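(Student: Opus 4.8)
The plan is to argue by contradiction, exploiting the fact that over a field of characteristic $p$ the difference of two $p^k$-th powers of linear polynomials collapses to a constant. Recall from the Preliminaries that a point of order $2g+1>2$ is always distinct from its image under $\iota$, and the two together form a pair; hence the number of points of order $2g+1$ in $\mathcal{C}_f(K)$ is even. If $\mathcal{C}_f(K)$ contained more than two such points, there would be at least four, so besides one pair $\{P,\iota(P)\}$ there would exist a second point $Q=(a_2,c_2)$ of order $2g+1$ with $Q\ne P,\iota(P)$, forcing $a_1\ne a_2$ (otherwise $Q\in\{P,\iota(P)\}$). Writing $P=(a_1,c_1)$ and applying Theorem \ref{PordK02g+1} with $K_0=K$ then produces polynomials $u_1(x),u_2(x)\in K[x]$ with $\deg u_i\le g$, with $f(x)=f_{a_1,a_2;u_1,u_2}(x)$, and with
$$u_1(x)\,u_2(x)=(x-a_2)^{2g+1}-(x-a_1)^{2g+1}.$$

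The crux is to evaluate this right-hand side under the hypothesis $2g+1=p^k$. Since $p=\fchar(K)$ is odd, the additivity of the $p^k$-th power map on $K[x]$ (Frobenius) gives
$$(x-a_2)^{p^k}-(x-a_1)^{p^k}=\bigl((x-a_2)-(x-a_1)\bigr)^{p^k}=(a_1-a_2)^{p^k},$$
which is a \emph{nonzero} constant because $a_1\ne a_2$. Consequently $u_1(x)u_2(x)$ is a nonzero constant in the integral domain $K[x]$, which forces both $u_1$ and $u_2$ to be nonzero constants. This is precisely where the assumption that $2g+1$ is a full power of $p$, rather than merely divisible by $p$, is indispensable: in the weaker situation of Remark \ref{diff} the product is only a $p$-th power of a nonconstant polynomial, and the argument would not go through.

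Finally I would invoke the no-multiple-roots hypothesis to close the loop. Since $f(x)=f_{a_1,a_2;u_1,u_2}(x)$ has no multiple roots, Theorem \ref{ordKf2g+1}(c1) asserts that neither $u_1$ nor $u_2$ is constant, directly contradicting the previous paragraph. (Concretely, once $u_1,u_2$ are constant so is $v_1:=(u_1+u_2)/2$, whence $f(x)=(x-a_1)^{2g+1}+v_1^2$ and $f'(x)=(2g+1)(x-a_1)^{2g}=0$ in $K[x]$ as $2g+1\equiv 0\pmod p$, so every root of $f$ is multiple.) Therefore no independent second pair $P,Q$ can exist: all points of order $2g+1$ lie in a single orbit $\{P,\iota(P)\}$, and $\mathcal{C}_f(K)$ contains at most two points of order $p^k$. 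I do not anticipate a genuine obstacle here; the only step requiring care is the Frobenius collapse, which is the heart of the matter and the sole place that uses $2g+1=p^k$.
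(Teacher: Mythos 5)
Your proposal is correct and follows essentially the same route as the paper: the identical reduction via Theorem \ref{PordK02g+1} with $K_0=K$, the same Frobenius collapse $(x-a_2)^{p^k}-(x-a_1)^{p^k}=(a_1-a_2)^{p^k}$ forcing $u_1,u_2$ to be nonzero constants, and then a contradiction with squarefreeness. The only (immaterial) difference is the endgame: the paper rewrites $f(x)=(x-a_1)^{p^k}+\left(\frac{b_1+b_2}{2}\right)^2$ as the visible $p^k$-th power $(x-a_1+b)^{p^k}$, whereas you invoke Theorem \ref{ordKf2g+1}(c1) (legitimately, since $\fchar(K)\mid 2g+1$ makes its degree hypothesis vacuous) or equivalently observe $f'(x)=0$.
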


 \begin{proof}
 Assume that $\mathcal{C}_f(K)$ contains at least three points of order $p^k=2g+1$. Let $P \in \mathcal{C}_f(K)$ be one of them. Then $P=(a_1,c_1)$ with
 $$a_1,c_1 \in K, c_1 \ne 0,  \ c_1^2=f(a_1).$$
Consequently, $\iota(P)=(a_1,-c_1)\in \mathcal{C}_f(K)$  also has order $2g+1$. Hence there exists another point $Q \in \mathcal{C}_f(K)$ of order $2g+1$ that is neither $P$ nor $\iota(P)$. This implies that $Q=(a_2,c_2)$ with
 $$a_2,c_2 \in K, c_2 \ne 0,  \ c_2^2=f(a_2), \ a_2 \ne a_1.$$
 By Theorem \ref{PordK02g+1} (applied to $K_0=K$), there exist polynomials $u_1(x),u_2(x)\in K[x]$ such that
 $$u_1(x)u_2(x)=(x-a_2)^{2g+1}-(x-a_1)^{2g+1}, \ f(x)=(x-a_1)^{2g+1}+\left(\frac{u_1(x)+u_2(x)}{2}\right)^2.$$
 Since $2g+1=p^k$ and $p=\fchar(K)$, the difference
 $$(x-a_2)^{2g+1}-(x-a_1)^{2g+1}=(x-a_2)^{p^k}-(x-a_1)^{p^k}=(a_1-a_2)^{p^k}$$
 is a nonzero element of $K$. It follows that both $u_1(x)$ and $u_2(x)$ are also nonzero elements of $K$,
 $u_1(x)=b_1\in K^{*}, \ u_2(x)=b_2 \in K^{*}$.
 Consequently,
 $$f(x)=(x-a_1)^{p^k}+\left(\frac{b_1+b_2}{2}\right)^2=
 (x-a_1+b)^{p^k},$$
 where
 $$b=  \left( \sqrt[p^k]{\frac{b_1+b_2}{2}}\right)^2.$$
 Therefore, $f(x)$ has multiple roots, which gives us the desired contradiction.
 \end{proof}

   \begin{Rem}
   The case $p=5, g=2, k=1$ of Theorem \ref{char2gPlus1} was done in \cite[Lemma 3.1]{Box1}.
   \end{Rem}

   \begin{Rem}
   \label{ell3}
   Let us consider the case when $p=\fchar(K)=3$ and $f(x)$ is a degree $3$ polynomial without multiple roots. Then the equation
   $y^2=f(x)$ defines an elliptic curve over the field $K$ of characteristic $3$. It is well known that an elliptic curve in characteristic 3
   has at most two points of order 3.  Theorem \ref{char2gPlus1} may be viewed as a generalization of this fact, where $3=3^1$ is replaced by any odd prime $p$ and $1$ by any positive integer $k$.
     \end{Rem}

     \section{Families of hyperelliptic curves}
     \label{famCurves}

 \begin{thm}
 \label{Pdegeneration}
 Let us assume that $\fchar(K)$ does not divide $2g+1$.
 Let $w_1(x), w_2(x)\in K[x]$ be degree $g$ polynomials without common roots.
 Then for all but finitely many $\lambda\in K^{*}$ the degree $2g+1$ polynomial
 $$h_{\lambda}(x)=\lambda x^{2g+1}+\left(\lambda w_1(x)+w_2(x)\right)^2$$
 has no multiple roots.
 \end{thm}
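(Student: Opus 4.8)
The plan is to reduce the statement to a separability assertion over the rational function field $K(\lambda)$ and then prove that by eliminating $\lambda$. Regard $\lambda$ as an indeterminate and view
$$h_\lambda(x)=\lambda x^{2g+1}+(\lambda w_1(x)+w_2(x))^2=\lambda^2 w_1(x)^2+\lambda\bigl(x^{2g+1}+2w_1(x)w_2(x)\bigr)+w_2(x)^2$$
as a polynomial in $x$ of degree $2g+1$ (leading coefficient $\lambda$) whose $x$-coefficients are polynomials in $\lambda$. Its $x$-resultant $R(\lambda):=\mathrm{Res}_x(h_\lambda,\partial_x h_\lambda)$ then lies in $K[\lambda]$. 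Here is where the hypothesis enters: since $\fchar(K)\nmid 2g+1$, for every $\lambda_0\in K^{*}$ the leading coefficients $\lambda_0$ and $(2g+1)\lambda_0$ of $h_{\lambda_0}$ and $\partial_x h_{\lambda_0}$ are both nonzero, so $R(\lambda_0)=0$ if and only if $h_{\lambda_0}$ has a multiple root. Thus it suffices to show $R(\lambda)\not\equiv 0$, i.e.\ that $h_\lambda$ is separable as a polynomial in $x$ over $K(\lambda)$; the exceptional $\lambda$ then form a subset of the finite zero set of $R$.

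To prove separability I would suppose, for contradiction, that $h_\lambda$ and $\partial_x h_\lambda$ share a root $\xi\in\overline{K(\lambda)}$. First, $\xi$ must be transcendental over $K$: otherwise $\xi\in K$ (as $K$ is algebraically closed), and then $h_\lambda(\xi)=\lambda^2 w_1(\xi)^2+\lambda(\xi^{2g+1}+2w_1(\xi)w_2(\xi))+w_2(\xi)^2$, regarded as an element of $K[\lambda]$, would vanish coefficientwise, forcing $w_1(\xi)=w_2(\xi)=0$ and contradicting that $w_1,w_2$ have no common root. Hence any polynomial over $K$ vanishing at $\xi$ is identically zero. Writing $A=\lambda w_1(\xi)+w_2(\xi)$ and $A_1=\lambda w_1'(\xi)+w_2'(\xi)$, the conditions $h_\lambda(\xi)=0$, $\partial_x h_\lambda(\xi)=0$ read $\lambda\xi^{2g+1}+A^2=0$ and $(2g+1)\lambda\xi^{2g}+2AA_1=0$, with $\xi\ne0$ (since $0\in K$) and $A\ne0$ (else $\lambda\xi^{2g+1}=0$). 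Multiplying the second by $\xi$, substituting $\lambda\xi^{2g+1}=-A^2$, and cancelling $A$ yields the linear-in-$\lambda$ relation
$$\lambda\,\Phi_1(\xi)=\Phi_2(\xi),\qquad \Phi_1:=2xw_1'-(2g+1)w_1,\quad \Phi_2:=(2g+1)w_2-2xw_2'.$$

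For the elimination step, let $c_1,c_2\ne0$ be the leading coefficients of $w_1,w_2$; then $\Phi_1,\Phi_2$ both have degree $g$ with leading coefficients $-c_1$ and $c_2$. In particular $\Phi_1\ne0$ in $K[x]$, so $\Phi_1(\xi)\ne0$ by transcendence, and I can solve $\lambda=\Phi_2(\xi)/\Phi_1(\xi)$. Inserting this into $A^2+\lambda\xi^{2g+1}=0$ and clearing $\Phi_1(\xi)^2$ gives $G(\xi)=0$, where
$$G:=\bigl(\Phi_2 w_1+\Phi_1 w_2\bigr)^2+\Phi_1\Phi_2\,x^{2g+1}\in K[x].$$
A short computation collapses the cross term, $\Phi_2 w_1+\Phi_1 w_2=2x(w_1'w_2-w_1w_2')$, so $G=4x^2(w_1w_2'-w_1'w_2)^2+\Phi_1\Phi_2\,x^{2g+1}$. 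The factor $w_1w_2'-w_1'w_2$ has degree at most $2g-2$ (the top coefficients cancel), so the first summand has degree at most $4g-2$, while $\Phi_1\Phi_2\,x^{2g+1}$ has degree $4g+1$ with leading coefficient $-c_1c_2\ne0$. Thus $G$ is a \emph{nonzero} polynomial, and $G(\xi)=0$ contradicts the transcendence of $\xi$. This proves $h_\lambda$ is separable over $K(\lambda)$, hence $R\not\equiv0$, and the theorem follows.

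The main obstacle is precisely the nonvanishing of the elimination polynomial $G$: the whole argument depends on the cancellation $\Phi_2 w_1+\Phi_1 w_2=2x(w_1'w_2-w_1w_2')$, which drops the apparently dominant term down to degree $\le 4g-2$ and leaves $\Phi_1\Phi_2 x^{2g+1}$ as the genuine top term. I would take care with the degree and leading-coefficient bookkeeping here (using $\fchar(K)\ne2$ when the factor $2$ appears, and $\fchar(K)\nmid 2g+1$ both to keep $\deg_x\partial_x h_\lambda=2g$ and to ensure the resultant faithfully detects repeated roots at each $\lambda_0\in K^{*}$), since any unnoticed cancellation of the leading term $-c_1c_2\,x^{4g+1}$ would collapse the argument.
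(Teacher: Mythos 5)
Your proposal is correct, and although its decisive computation coincides with the paper's, the logical framework is genuinely different. The paper argues pointwise over $K$: for each fixed $x_0\in K$ it observes that $h_{\lambda}(x_0)$ is a quadratic in $\lambda$ which is not identically zero (this is where coprimality of $w_1,w_2$ enters there), so each abscissa contributes at most two bad $\lambda$; it then shows that only finitely many $x_0$ can occur as multiple roots, via a two-case analysis according to whether $\lambda w_1(x_0)+w_2(x_0)$ vanishes, discarding along the way the finitely many $x_0$ with $w_1(x_0)=0$, $x_0=0$, $w_2(x_0)=0$, or $(2g+1)w_1(x_0)-2x_0w_1^{\prime}(x_0)=0$. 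You instead pass to the generic fiber: the resultant $R(\lambda)$ transfers the problem to separability of $h_{\lambda}$ over $K(\lambda)$ (your attention to the leading coefficients $\lambda$ and $(2g+1)\lambda$, i.e.\ to $\fchar(K)\nmid 2g+1$, is exactly what legitimizes the specialization at each $\lambda_0\in K^{*}$), and coprimality of $w_1,w_2$ enters instead through the transcendence of the putative common root $\xi$, since vanishing of $h_\lambda(\xi)$ coefficientwise in $\lambda$ would force a common root. Two pleasant consequences: the paper's case (i) evaporates, because $\lambda\ne0$ and $\xi\ne0$ force $A\ne0$ automatically; and all of the paper's finite ad hoc exclusions collapse into the single principle that a nonzero polynomial over $K$ cannot vanish at a transcendental element. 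Your elimination polynomial $G$ is literally the paper's $H$ (your $\Phi_1,\Phi_2$ are the negatives of the paper's two degree-$g$ factors, and the signs cancel in both terms of $G$), and both proofs conclude by noting that this polynomial has degree exactly $4g+1$; your Wronskian identity $\Phi_2w_1+\Phi_1w_2=2x\left(w_1^{\prime}w_2-w_1w_2^{\prime}\right)$, which bounds the square term by degree $4g-2$, is a sharpening the paper does not need, since the crude bound $\le 2(g+g)=4g<4g+1$ already suffices. As for what each route buys: the paper's is entirely elementary and exhibits the exceptional abscissas explicitly; yours is structurally cleaner, proves the a priori stronger statement that the generic member of the family is separable, and yields an effective bound on the number of bad $\lambda$ (namely $\deg_{\lambda}R$) with no case analysis.
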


 \begin{proof}
 Fix  $x_0 \in K$. Then
 $$h_{\lambda}(x_0)=w_1^2(x_0)\lambda^2+(x_0^{2g+1}+2w_1(x_0)w_2(x_0))\lambda +w_2(x_0)^2$$
 is a polynomial in $\lambda$ of degree $\le 2$ such that at least one of its coefficients does not vanish. Indeed, either  its coefficient $w_1^2(x_0)$
 at $\lambda^2$ is not $0$ or its constant term $w_2(x_0)^2$ does not vanish,
 because either $w_1(x_0)\ne 0$ or $w_2(x_0)\ne 0$. This implies that there exist at most two $\lambda\in K$ such that  $h_{\lambda}(x_0)=0$.
 Hence, in order to prove the theorem, it suffices to check that there are only finitely many $x_0\in K$ for which there is $\lambda\in K^{*}$ such that
   $h_{\lambda}(x_0)=h^{\prime}_{\lambda}(x_0)=0$.  Our plan is to produce several polynomials in $x$ that do not depend on $\lambda$ and such that our $x_0$ is a root of one of them.

 We have
 $$h_{\lambda}^{\prime}(x)=(2g+1)\lambda  x^{2g}+2\left(\lambda w_1(x)+w_2(x)\right) \left(\lambda w^{\prime}_1(x)+w^{\prime}_2(x)\right).$$
 Suppose that $x_0\in K$ and $\lambda\in K^{*}$ satisfy $h_{\lambda}(x_0)=h_{\lambda}^{\prime}(x)=0$, i.e., $x_0$ is a multiple root of $h_{\lambda}(x)$.
 This means that $x_0$ is a solution of the system
 $$\lambda x^{2g+1}+\left(\lambda w_1(x)+w_2(x)\right)^2=0,$$
 $$(2g+1)\lambda  x^{2g}+2\left(\lambda w_1(x)+w_2(x)\right) \left(\lambda w^{\prime}_1(x)+w^{\prime}_2(x)\right)=0.$$
 Multiplying the second equation  by $x$ and the first equation  by $2g+1$, and subtracting one from the other, we obtain that $x_0$ is a solution of the
  equation
 $$(2g+1)\left(\lambda w_1(x)+w_2(x)\right)^2-2x\left(\lambda w_1(x)+w_2(x)\right) \left(\lambda w^{\prime}_1(x)+w^{\prime}_2(x)\right)=0.$$
Hence either
 \begin{itemize}
 \item[(i)]
 $\lambda w_1(x_0)+w_2(x_0)=0$

\noindent or
  \item[(ii)]  $(2g+1)\left(\lambda w_1(x_0)+w_2(x_0)\right)-2x_0\left(\lambda w^{\prime}_1(x_0)+w^{\prime}_2(x_0)\right)=0$.
 \end{itemize}

 Case (i).  Since the set of roots of $w_1(x)$ is finite, we may assume that $x_0$ is not one of them and get
 $\lambda=-w_2(x_0)/w_1(x_0)$. It follows from the first equation of the system that $x_0$ is a solution of the equation
 $$-\frac{w_2(x)}{w_1(x)} x^{2g+1}+\left(-\frac{w_2(x)}{w_1(x)} w_1(x)+w_2(x)\right)^2=0.$$
 This means that $-w_2(x_0)x_0^{2g+1}/ w_1(x_0) =0$,
which implies that  case (i) holds only for finitely many values of $x_0$, namely if either $x_0=0$  or $x_0$ is one of the  roots of $w_2(x)$.

 Case (ii). In this case we have
 $$\left((2g+1)w_1(x_0)-2x_0 w^{\prime}_1(x_0)\right)\lambda=2 x_0 w^{\prime}_2(x_0)-(2g+1)w_2(x_0).$$
 Since   $\deg(w_1)=g \ne (2g+1)/2$, the polynomial $(2g+1)w_1(x_0)-2x_0 w^{\prime}_1(x)$ has degree $g$ and the set of its roots is finite.
 So, we may assume that $x_0$ is not one of them, i.e., $ (2g+1)w_1(x_0)-2x_0 w^{\prime}_1(x_0) \ne 0$ and
 $$\lambda=\frac{2 x_0 w^{\prime}_2(x_0)-(2g+1)w_2(x_0) }{ (2g+1)w_1(x_0)-2x_0 w^{\prime}_1(x_0) }.$$
 Plugging  this expression for $\lambda$ in the first equation of the system, we get that $x_0$ is a solution of the equation
 $$\begin{aligned}&\frac{2 x w^{\prime}_2(x)-(2g+1)w_2(x) }{(2g+1)w_1(x)-2x w^{\prime}_1(x)} x^{2g+1}\\+&\left(\frac{2 x w^{\prime}_2(x)-(2g+1)w_2(x) }{ (2g+1)w_1(x)-2x w^{\prime}_1(x) } w_1(x)+w_2(x)\right)^2=0.\end{aligned}$$
 This means that $x_0$ is a root of the polynomial
 $$\begin{aligned}&H(x):=(2 x w^{\prime}_2(x)-(2g+1)w_2(x) )((2g+1)w_1(x)-2x w^{\prime}_1(x)) x^{2g+1}\\&+
\left((2 x w^{\prime}_2(x)-(2g+1)w_2(x) ) w_1(x)+((2g+1)w_1(x)-2x w^{\prime}_1(x))w_2(x)\right)^2.\end{aligned}$$
Since $\deg(w_1)=\deg(w_2)=g \ne (2g+1)/2$, both polynomials $(2 x w^{\prime}_2(x)-(2g+1)w_2(x) )$ and $((2g+1)w_1(x)-2x w^{\prime}_1(x))$
have degree $g$. This implies that the first term in the formula for $H(x)$ is a polynomial of degree $g+g+(2g+1)=4g+1$. On the other hand,
the second term in the formula for $H(x)$ is a polynomial of degree $\le 2\cdot (g+g)=4g$. Therefore, $\deg(H)=4g+1$ and the set of roots of $H(x)$ is finite.

To summarize: there are only finitely many $x_0\in K$ such that there exists $\lambda \in K^{*}$ for which $x_0$ is a multiple root of $h_{\lambda}(x)$. This ends the proof.
 \end{proof}

 \begin{thm}
 \label{degeneration}
 Let us assume that $\fchar(K)$ does not divide $2g+1$.
 Let $a_1,a_2$ be distinct elements of  $ K$, and let
 $u_1(x), u_2(x)\in K[x]$ be degree $g$ polynomials that satisfy
 $$  u_1(x)u_2(x)=(x-a_2)^{2g+1}-(x-a_1)^{2g+1}.$$

  Then the following conditions hold.

 \begin{itemize}
 \item[(i)]
 If $\mu \in K^{*}$, then $\mu u_1(x), \mu^{-1} u_2(x) \in K[x]$ are degree $g$ polynomials that satisfy
 $$(\mu u_1(x)) (\mu^{-1} u_2(x))= u_1(x)u_2(x)=(x-a_2)^{2g+1}-(x-a_1)^{2g+1}.$$
  \item[(ii)]
 There are only finitely many $\mu \in K^{*}$ such that the polynomial
 $$f_{a_1,a_2;\mu u_1,\mu^{-1}u_2}(x)=(x-a_1)^{2g+1}+\left(\frac{\mu u_1(x)+\mu^{-1}u_2(x)}{2}\right)^2$$
 has a multiple root.
 \end{itemize}
 \end{thm}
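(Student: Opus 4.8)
Part (i) is immediate algebra, so the real content is (ii): showing that only finitely many $\mu\in K^{*}$ make $f_{a_1,a_2;\mu u_1,\mu^{-1}u_2}(x)$ have a repeated root. The plan is to reduce this to the previous theorem (Theorem \ref{Pdegeneration}) by a change of variable. First I would translate by $a_1$, using Theorem \ref{ordKf2g+1}(b): setting $a=a_2-a_1$, $\tilde u_i(x)=u_i(x+a_1)$, we have $f_{a_1,a_2;\mu u_1,\mu^{-1}u_2}(x+a_1)=x^{2g+1}+\bigl(\tfrac{\mu\tilde u_1(x)+\mu^{-1}\tilde u_2(x)}{2}\bigr)^2$, and a translation does not change whether a polynomial has multiple roots. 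So it suffices to handle the centered case $a_1=0$.

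Next I would match the shape of Theorem \ref{Pdegeneration}, whose target polynomial is $h_\lambda(x)=\lambda x^{2g+1}+(\lambda w_1(x)+w_2(x))^2$. The idea is to factor out a power of $\mu$ from $x^{2g+1}+\bigl(\tfrac{\mu\tilde u_1(x)+\mu^{-1}\tilde u_2(x)}{2}\bigr)^2$. Multiplying through by $4\mu^{2}$ (a nonzero scalar, which does not affect the multiple-root property) gives
\[
4\mu^{2}x^{2g+1}+\bigl(\mu^{2}\tilde u_1(x)+\tilde u_2(x)\bigr)^2 .
\]
Now set $\lambda:=4\mu^{2}$ and rescale to absorb the constant: writing $w_1(x):=\tfrac14\tilde u_1(x)$ and $w_2(x):=\tfrac12\tilde u_2(x)$ (so that $\mu^2\tilde u_1=\lambda w_1$ and $\tilde u_2=2w_2$), the polynomial becomes, up to the harmless factor, exactly of the form $\lambda x^{2g+1}+(\lambda w_1(x)+w_2(x))^2$ appearing in Theorem \ref{Pdegeneration}. (The precise bookkeeping of these scalar factors is routine; the point is that for each $\mu$ one gets a single corresponding $\lambda$, and $\lambda$ ranges over $\{4\mu^2:\mu\in K^{*}\}$, a subset of $K^{*}$.)

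I must then check the hypothesis of Theorem \ref{Pdegeneration}: that $w_1$ and $w_2$ are degree $g$ polynomials \emph{without common roots}. Degree $g$ is clear since $\tilde u_1,\tilde u_2$ have degree $g$ (by hypothesis, as $\fchar(K)\nmid 2g+1$) and the scalars are nonzero. For the no-common-root condition I would invoke Remark \ref{diff}: since $\fchar(K)$ does not divide $2g+1$, the polynomial $\tilde u_1(x)\tilde u_2(x)=(x-a)^{2g+1}-x^{2g+1}$ has no multiple roots, hence $\tilde u_1$ and $\tilde u_2$ share no root; the scalar multiples $w_1,w_2$ inherit this. Theorem \ref{Pdegeneration} then yields that $h_\lambda$ has a multiple root for only finitely many $\lambda\in K^{*}$. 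Pulling this back: the set of $\mu$ for which $f_{a_1,a_2;\mu u_1,\mu^{-1}u_2}$ has a multiple root corresponds to the finitely many bad $\lambda=4\mu^2$, and each such $\lambda$ yields at most two values of $\mu$, so only finitely many $\mu$ are bad.

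The main obstacle is bookkeeping rather than conceptual: one has to match the two quadratic-in-the-deformation-parameter forms correctly, tracking the scalar factors so that the degree-$g$ and no-common-root hypotheses of Theorem \ref{Pdegeneration} transfer cleanly, and confirming that each admissible $\lambda$ in that theorem comes from finitely many $\mu$ here. Once the substitution $\lambda=4\mu^2$ and the rescaling of $\tilde u_1,\tilde u_2$ are set up, the finiteness is an immediate consequence of Theorem \ref{Pdegeneration}.
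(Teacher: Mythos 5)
Your proposal is correct and essentially identical to the paper's proof: the paper likewise translates to $a_1=0$ via Theorem \ref{ordKf2g+1}(b), uses Remark \ref{diff} to conclude that $u_1$ and $u_2$ have no common roots, multiplies the polynomial by a square of $\mu$ to reach the form $\lambda x^{2g+1}+\left(\lambda w_1(x)+w_2(x)\right)^2$ of Theorem \ref{Pdegeneration} (the paper's normalization is $\lambda=\mu^2$, $w_i=u_i/2$), and pulls the finitely many bad $\lambda$ back through the at-most-two-to-one map $\mu\mapsto\lambda$. One trivial bookkeeping slip: with $\lambda=4\mu^2$ and $w_1=\tfrac14\tilde u_1$ you need $w_2=\tilde u_2$ rather than $\tfrac12\tilde u_2$, since $\lambda w_1+w_2$ must equal $\mu^2\tilde u_1+\tilde u_2$; with that correction (or the paper's normalization) everything goes through as you describe.
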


 \begin{proof}
 Using  Theorem \ref{ordKf2g+1}(b), we may and will assume that $a_1=0$, $a_2=a \ne 0$, and
 $$f_{a_1,a_2;\mu u_1,\mu^{-1}u_2}(x)=f_{0,a;\mu u_1,\mu^{-1}u_2}(x).$$
 We have
 $$u_1(x)u_2(x)=(x-a)^{2g+1}-x^{2g+1}$$
 and
 $$u_i(0) \ne 0, u_i(a) \ne 0 \ \text{ for } \ i=1,2.$$
 Since $\fchar(K)$ does not divide $2g+1$, Remark \ref{diff} tells us that the polynomial $(x-a)^{2g+1}-x^{2g+1}$ has no multiple roots.
 This implies that $u_1(x)$ and $u_2(x)$ have no common roots.
 We have
 $$\begin{aligned}f_{0,a;\mu u_1,\mu^{-1}u_2}(x)&=
 x^{2g+1}+\left(\frac{\mu u_1(x)+\mu^{-1}u_2(x)}{2}\right)^2\\&=x^{2g+1}+\left(\mu w_1(x)+\mu^{-1}w_2(x)\right)^2,\end{aligned}$$
 where $w_1(x)=u_1(x)/2, \ w_2(x)=u_2(x)/2$.  Clearly, $w_1(x)$ and $w_2(x)$ are degree $g$ polynomials without common roots.
 We have
 $$\mu^2 f_{0,a;\mu u_1,\mu^{-1}u_2}(x)=\mu^2 x^{2g+1}+\left(\mu^2 w_1(x)+w_2(x)\right)^2.$$
 It follows from Theorem \ref{Pdegeneration} that  there is a finite set $S \subset K^{*}$ such that if $\mu^2 \not\in S$, then
 $\mu^2 f_{a_1,a_2;\mu u_1,\mu^{-1}u_2}(x)$  has no multiple roots and therefore   $f_{0,a;\mu u_1,\mu^{-1}u_2}(x)$
 also has no multiple roots. Consequently, $f_{0,a;\mu u_1,\mu^{-1}u_2}(x)$ has no multiple roots for all but finitely many $\mu\in K^{*}$.
 \end{proof}

 \begin{thm}
 \label{pdegeneration2}
 Let us assume that $p:=\fchar(K)>0$, $p$  divides $2g+1$, but $2g+1$ is not a power of $p$.
 Let $w_1(x), w_2(x)\in K[x]$ be nonconstant polynomials such that
 $$\deg(w_1)\le g, \deg(w_2)\le g; w_1^{\prime}(x)\ne 0, w_2^{\prime}(x)\ne 0;  w_1(0)\ne 0, w_2(0) \ne 0.$$
 Assume also that
 $$\left(w_1(x)w_2(x)\right)^{\prime}=0.$$
 Then for all but finitely many $\lambda\in K^{*}$ the degree $2g+1$ polynomial
 $$h_{\lambda}(x)=\lambda x^{2g+1}+\left(\lambda w_1(x)+w_2(x)\right)^2$$
 has no multiple roots.
 \end{thm}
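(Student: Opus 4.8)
The plan is to follow the strategy of Theorem~\ref{Pdegeneration}, with the role previously played by the nonvanishing of $2g+1$ now taken over by the hypothesis $(w_1(x)w_2(x))'=0$. As in that proof, I first observe that for each fixed $x_0\in K$ the value $h_\lambda(x_0)=w_1(x_0)^2\lambda^2+(x_0^{2g+1}+2w_1(x_0)w_2(x_0))\lambda+w_2(x_0)^2$ is a \emph{nonzero} polynomial in $\lambda$ of degree at most $2$: its leading coefficient is $w_1(x_0)^2$ and its constant term is $w_2(x_0)^2$, and if both vanish then $w_1(x_0)=w_2(x_0)=0$, which forces $x_0\ne 0$ (because $w_1(0)\ne 0$) and leaves the nonzero linear term $x_0^{2g+1}\lambda$. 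Hence at most two values of $\lambda$ annihilate $h_\lambda(x_0)$, and it suffices to show that only finitely many $x_0\in K$ can be a multiple root of $h_\lambda(x)$ for some $\lambda\in K^{*}$.

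The decisive simplification is that, since $p$ divides $2g+1$, the term $(2g+1)\lambda x^{2g}$ vanishes in $K[x]$, so $h_\lambda'(x)=2(\lambda w_1(x)+w_2(x))(\lambda w_1'(x)+w_2'(x))$. A multiple root $x_0$ thus satisfies $h_\lambda(x_0)=0$ and, because $\fchar(K)\ne 2$, either $\lambda w_1(x_0)+w_2(x_0)=0$ or $\lambda w_1'(x_0)+w_2'(x_0)=0$. In the first case $h_\lambda(x_0)=\lambda x_0^{2g+1}=0$ forces $x_0=0$, a single value. In the second case, after discarding the finitely many roots of $w_1'(x)$, I may solve $\lambda=-w_2'(x_0)/w_1'(x_0)$; substituting this and using $(w_1w_2)'=w_1'w_2+w_1w_2'=0$ to rewrite $w_1'(x_0)w_2(x_0)=-w_1(x_0)w_2'(x_0)$, I obtain $\lambda w_1(x_0)+w_2(x_0)=-2w_1(x_0)w_2'(x_0)/w_1'(x_0)$. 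Plugging this into $h_\lambda(x_0)=0$ and clearing the denominator $w_1'(x_0)^2$ gives $w_2'(x_0)\,G(x_0)=0$, where $G(x):=4w_1(x)^2w_2'(x)-w_1'(x)x^{2g+1}$ is independent of $\lambda$.

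The key step, analogous to the degree $4g+1$ computation in Theorem~\ref{Pdegeneration}, is to verify that $G\not\equiv 0$. Using $(w_1w_2)'=0$ once more, $4w_1^2w_2'=4w_1(w_1w_2')=-4w_1w_1'w_2$, whence $G=-w_1'(4w_1w_2+x^{2g+1})$; since $\deg(w_1w_2)\le 2g<2g+1$, the factor $4w_1w_2+x^{2g+1}$ is monic of degree exactly $2g+1$ and hence nonzero, and $w_1'\ne 0$ by hypothesis, so $G\ne 0$. Therefore both $w_2'$ and $G$ have only finitely many roots, and combining the two cases shows that only finitely many $x_0$ can be a multiple root of some $h_\lambda$. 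Together with the pointwise bound of at most two $\lambda$ per $x_0$, this proves that $h_\lambda(x)$ has a multiple root for only finitely many $\lambda\in K^{*}$. The only bookkeeping to keep track of is that the excluded loci — the roots of $w_1'$ and of $w_2'$ — are finite, which they are; I expect the verification $G\not\equiv 0$ to be the crux, exactly as the hypothesis $(w_1w_2)'=0$ is tailored to make it go through.
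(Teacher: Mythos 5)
Your proof is correct and takes essentially the same approach as the paper's: the same pointwise bound of at most two $\lambda$ per $x_0$, the same factorization $h_{\lambda}^{\prime}(x)=2\left(\lambda w_1(x)+w_2(x)\right)\left(\lambda w_1^{\prime}(x)+w_2^{\prime}(x)\right)$ coming from $p\mid 2g+1$, and the same case analysis, with your $\lambda$-free polynomial $G(x)=-w_1^{\prime}(x)\left(x^{2g+1}+4w_1(x)w_2(x)\right)$ being the paper's key polynomial $H(x)=x^{2g+1}+4w_1(x)w_2(x)$ up to the harmless nonzero factor $w_1^{\prime}(x)$. The only (cosmetic) deviations are that you conclude $x_0=0$ directly in case (i) instead of excluding it at the outset, and in case (ii) you substitute $\lambda=-w_2^{\prime}(x_0)/w_1^{\prime}(x_0)$ directly rather than first converting it to $w_2(x_0)/w_1(x_0)$ via $\left(w_1w_2\right)^{\prime}=0$, which merely adds the finite root set of $w_2^{\prime}$ to the exceptional locus.
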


 \begin{proof}
 Fix  $x_0 \in K$. Then
 $$h_{\lambda}(x_0)=w_1^2(x_0)\lambda^2+(x_0^{2g+1}+2w_1(x_0)w_2(x_0))\lambda +w_2(x_0)^2$$
 is a polynomial in $\lambda$ of degree $\le 2$ such that at least one of its coefficients does not vanish. Indeed, if
 $$w_1^2(x_0)=0,\ w_2(x_0)^2=0, \ x_0^{2g+1}+2w_1(x_0)w_2(x_0)=0,$$
then $$w_1(x_0)=0, \ w_2(x_0)=0, \ x_0=0,$$
 which means that
 $$x_0=0,\ w_1(0)=0,\ w_2(0)=0.$$
 However, $x_0=0$ is not a zero of $w_1(x)$, which gives us the desired contradiction.

  This implies that for any given $x_0\in K$ there exist at most two $\lambda\in K$ such that $h_{\lambda}(x_0)=0$.
 Hence, in order to prove the theorem, it suffices to check that there are only finitely many $x_0\in K$ for which there is $\lambda\in K^{*}$ such that
 $h_{\lambda}(x_0)=h_{\lambda}^{\prime}(x_0)=0$.  Our plan is to produce (as in the proof of Theorem \ref{Pdegeneration}) several polynomials in $x$ that do not depend on $\lambda$ and such that our $x_0$ is a root of one of them. From the very beginning, we may exclude finally many values of $x_0$.  In particular, we may and will assume that
 \beq\label{notx0}
 x_0 \ne 0, \ w_1(x_0)\ne 0,\  w_1^{\prime}(x_0)\ne 0,\ w_2(x_0)\ne 0,\  w_2^{\prime}(x_0)\ne 0.
 \eeq
 Since the derivative of $w_1(x)w_2(x)$ is identically $0$, we get
 $$0=w_1^{\prime}(x_0)w_2(x_0)+w_2^{\prime}(x_0)w_1(x_0)$$
 and therefore
 \beq\label{ratioDER}
 \frac{w_2^{\prime}(x_0)}{w_1^{\prime}(x_0)}=-\frac{w_2(x_0)}{w_1(x_0)}.
 \eeq
We have
 $$\begin{aligned}h_{\lambda}^{\prime}(x)&=(2g+1)\lambda  x^{2g+1}+2\left(\lambda w_1(x)+w_2(x)\right) \left(\lambda w^{\prime}_1(x)+w^{\prime}_2(x)\right)\\&=
2\left(\lambda w_1(x)+w_2(x)\right) \left(\lambda w^{\prime}_1(x)+w^{\prime}_2(x)\right)
 .\end{aligned}$$

 Suppose that $x_0\in K$ and $\lambda\in K^{*}$ satisfy $h_{\lambda}(x_0)=h_{\lambda}^{\prime}(x_0)=0$, i.e., $x_0$ is a multiple root of $h_{\lambda}(x)$.
 This means that $x_0$ is a solution of the system
 $$\begin{aligned}\lambda x^{2g+1}+\left(\lambda w_1(x)+w_2(x)\right)^2=0, \\
\left(\lambda w_1(x)+w_2(x)\right) \left(\lambda w^{\prime}_1(x)+w^{\prime}_2(x)\right)=0.\end{aligned}$$
 Hence either

 \begin{itemize}
 \item[(i)]
 $\lambda w_1(x_0)+w_2(x_0)=0$

\noindent or

 \item[(ii)]
$\lambda w^{\prime}_1(x_0)+w^{\prime}_2(x_0)=0.$
 \end{itemize}

  Case (i).  Since $w_1(x_0)\ne 0$, we get
 $\lambda=-w_2(x_0)/w_1(x_0)$. It follows from the first equation of the system that $x_0$ is a solution of the equation
 $$-\frac{w_2(x)}{w_1(x)} x^{2g+1}+\left(-\frac{w_2(x)}{w_1(x)} w_1(x)+w_2(x)\right)^2=0.$$
Consequently, $$-\frac{w_2(x_0)}{w_1(x_0)} x_0^{2g+1}=0,$$ which is not the case, since
 $x_0\ne 0$ and $w_2(x_0)\ne 0$. So,  the case (i) does not occur.

  Case (ii).  Since  $w^{\prime}_1(x_0)\ne 0$,  we get
 $\lambda=-w_2^{\prime}(x_0)/w_1^{\prime}(x_0)$. In light of \eqref{ratioDER},
 $$\lambda=\frac{w_2(x_0)}{w_1(x_0)}.$$
 It follows from the first equation of the system that $x_0$ is a solution of the equation
 $$\frac{w_2(x)}{w_1(x)}x^{2g+1}+\left(\frac{w_2(x)}{w_1(x)} w_1(x)+w_2(x)\right)^2=0,$$
 i.e., $x_0$ is a solution of the equation
 $$\frac{w_2(x)}{w_1(x)}x^{2g+1}+\left(2w_2(x)\right)^2=0.$$
 Multiplying this equation by $w_1(x)$, we obtain that
  $x_0$ is a root of the polynomial
 $$w_2(x) x^{2g+1}+
 4(w_2(x))^2 w_1(x)=w_2(x)\left(x^{2g+1}+4 w_1(x) w_2(x)\right).$$
 Since $w_2(x_0)\ne 0$,  $x_0$ is a root of the polynomial
 $H(x)=x^{2g+1}+4 w_1(x) w_2(x)$.
 Since  both $\deg(w_i)\le g$, we have $\deg(w_1(x)w_2(x))\le 2g<2g+1$, and therefore
 $H(x)$ is a polynomial of degree $2g+1$. In  particular, the set of roots of $H(x)$ is finite.

 To summarize: there are only finitely many $x_0\in K$ for which there exists $\lambda \in K^{*}$ such that $x_0$ is a multiple root of $h_{\lambda}(x)$. This ends the proof.
 \end{proof}

 \begin{thm}
 \label{degenerationCHARp}
 Let us assume that $p:=\fchar(K)>0$  and $p$ divides $2g+1$, but $2g+1$ is not a power of $p$.
 Let $a_1,a_2$ be distinct elements of  $ K$, and let
 $u_1(x), u_2(x)\in K[x]$ be  polynomials that satisfy
 $$  u_1(x)u_2(x)=(x-a_2)^{2g+1}-(x-a_1)^{2g+1},$$
 $$\deg(u_1)\le g, \deg(u_2)\le g, \ u_1^{\prime}(x)\ne 0, u_2^{\prime}(x)\ne 0.$$

 Then the following conditions hold.

 \begin{itemize}
 \item[(i)]
 If $\mu \in K^{*}$, then $\mu u_1(x), \mu^{-1} u_2(x) \in K[x]$ are  polynomials of degree $\le g$ such that $$(\mu u_1(x))^{\prime}\ne 0,\ (\mu u_2(x))^{\prime}\ne 0,$$
 $$(\mu u_1(x)) (\mu^{-1} u_2(x))= u_1(x)u_2(x)=(x-a_2)^{2g+1}-(x-a_1)^{2g+1}.$$
  \item[(ii)]
 There are only finitely many $\mu \in K^{*}$ such that the polynomial
 $$f_{a_1,a_2;\mu u_1,\mu^{-1}u_2}(x)=(x-a_1)^{2g+1}+\left(\frac{\mu u_1(x)+\mu^{-1}u_2(x)}{2}\right)^2$$
 has a multiple root.
 \end{itemize}
 \end{thm}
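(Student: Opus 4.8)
Part (i) requires essentially no work: for $\mu\in K^{*}$, multiplication by $\mu$ or $\mu^{-1}$ neither changes the degree of a polynomial nor annihilates a nonzero derivative, since $(\mu u_1)'=\mu u_1'\ne 0$ and $(\mu^{-1}u_2)'=\mu^{-1}u_2'\ne 0$, while the product $(\mu u_1)(\mu^{-1}u_2)=u_1 u_2$ is manifestly unchanged. So the plan concentrates entirely on part (ii), and the strategy is to mirror the proof of Theorem \ref{degeneration}, replacing the appeal to Theorem \ref{Pdegeneration} by an appeal to its characteristic-$p$ counterpart, Theorem \ref{pdegeneration2}.

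First I would normalize the situation using Theorem \ref{ordKf2g+1}(b): after the substitution $x\mapsto x+a_1$ we may assume $a_1=0$ and $a_2=a\ne 0$, so that $u_1(x)u_2(x)=(x-a)^{2g+1}-x^{2g+1}$, with the degree, nonzero-derivative, and product conditions all preserved under the shift (note $(u(x+a_1))'=u'(x+a_1)$ is again nonzero); moreover a multiple root of $f_{a_1,a_2;\mu u_1,\mu^{-1}u_2}$ corresponds to a multiple root of $f_{0,a;\mu u_1,\mu^{-1}u_2}$, so it suffices to treat the latter. Setting $w_1(x)=u_1(x)/2$ and $w_2(x)=u_2(x)/2$, I then need to verify that $w_1,w_2$ satisfy the hypotheses of Theorem \ref{pdegeneration2}. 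The bounds $\deg(w_i)\le g$ and $w_i'\ne 0$ are immediate from the hypotheses on $u_i$ (in particular this forces $w_1,w_2$ to be nonconstant). The condition $w_i(0)\ne 0$ follows from Remark \ref{produ1u2}, which yields $u_1(0)u_2(0)=(-a)^{2g+1}\ne 0$. Finally, the crucial identity $(w_1 w_2)'=0$ comes from differentiating $u_1 u_2=(x-a)^{2g+1}-x^{2g+1}$: the derivative equals $(2g+1)\bigl((x-a)^{2g}-x^{2g}\bigr)$, which vanishes because $p\mid 2g+1$ makes $2g+1=0$ in $K$ (exactly the computation recorded in Remark \ref{diff}).

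With the hypotheses confirmed, I would run the same rescaling device as in Theorem \ref{degeneration}. Writing
$$f_{0,a;\mu u_1,\mu^{-1}u_2}(x)=x^{2g+1}+\bigl(\mu w_1(x)+\mu^{-1}w_2(x)\bigr)^2$$
and multiplying by $\mu^2$ gives
$$\mu^2 f_{0,a;\mu u_1,\mu^{-1}u_2}(x)=\mu^2 x^{2g+1}+\bigl(\mu^2 w_1(x)+w_2(x)\bigr)^2,$$
which is precisely the polynomial $h_{\lambda}(x)$ of Theorem \ref{pdegeneration2} with $\lambda=\mu^2$. That theorem supplies a finite set of forbidden values of $\lambda$; for every $\mu$ with $\mu^2$ outside this set, $\mu^2 f_{0,a;\mu u_1,\mu^{-1}u_2}$, and hence $f_{0,a;\mu u_1,\mu^{-1}u_2}$ itself, has no multiple root. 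Since the squaring map $\mu\mapsto\mu^2$ has fibers of size at most two, finitely many forbidden values of $\mu^2$ pull back to finitely many forbidden values of $\mu$, which proves (ii).

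The argument is largely formal once the reduction is in place; the only genuinely content-bearing step is confirming the hypotheses of Theorem \ref{pdegeneration2}, and within that the identity $(w_1 w_2)'=0$ is the linchpin. It is precisely here that the assumption $p\mid 2g+1$ enters essentially (it is what makes the characteristic-$p$ theorem applicable), while the hypothesis that $2g+1$ is not a power of $p$ is carried along to guarantee that $u_1,u_2$ are genuinely nonconstant with nonzero derivative, so that the $w_i$ meet the standing assumptions of Theorem \ref{pdegeneration2}.
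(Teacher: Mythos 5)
Your proposal is correct and follows essentially the same route as the paper: the trivial verification of (i), the reduction to $a_1=0$, $a_2=a$ via Theorem \ref{ordKf2g+1}(b), the observation that $(u_1u_2)^{\prime}=0$ because $p\mid 2g+1$, and the rescaling $\mu^2 f_{0,a;\mu u_1,\mu^{-1}u_2}(x)=\mu^2x^{2g+1}+\left(\mu^2 w_1(x)+w_2(x)\right)^2$ with $w_i=u_i/2$ so that Theorem \ref{pdegeneration2} applies with $\lambda=\mu^2$. Your extra touches (citing Remark \ref{produ1u2} for $w_i(0)\ne 0$ and noting that the squaring map has fibers of size at most two) only make explicit what the paper leaves implicit.
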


 \begin{proof}
 (i) is obvious. Let us prove (ii).
 Using  Theorem \ref{ordKf2g+1}(b), we may and will assume that $a_1=0$, $a_2=a \ne 0$,
 $$f_{a_1,a_2;\mu u_1,\mu^{-1}u_2}(x)=f_{0,a;\mu u_1,\mu^{-1}u_2}(x),$$
 $$u_1(x)u_2(x)=(x-a)^{2g+1}-x^{2g+1},$$
 and
 $$u_i(0) \ne 0, \ u_i(a) \ne 0 \ \text{ for } \ i=1,2.$$
 Since $\fchar(K)$ divides $2g+1$,  the derivatives of both $(x-a)^{2g+1}$ and $x^{2g+1}$ are $0$. This implies that
 $$(u_1(x)u_2(x))^{\prime}=0.$$
 We have
 $$\begin{aligned}f_{0,a;\mu u_1,\mu^{-1}u_2}(x)&=
 x^{2g+1}+\left(\frac{\mu u_1(x)+\mu^{-1}u_2(x)}{2}\right)^2\\&=x^{2g+1}+\left(\mu w_1(x)+\mu^{-1}w_2(x)\right)^2,\end{aligned}$$
 where $w_1(x)=u_1(x)/2, \ w_2(x)=u_2(x)/2$.  Clearly, $w_1(x)$ and $w_2(x)$ are polynomials of degree $\le g$ and
 $$w_1^{\prime}(x)\ne 0,\ w_2^{\prime}(x)\ne 0,\ (w_1(x)w_2(x))^{\prime}=0.$$
 Since
 $$\mu^2 f_{0,a;\mu u_1,\mu^{-1}u_2}(x) =\mu^2 x^{2g+1}+\left(\mu^2 w_1(x)+w_2(x)\right)^2,$$
 it follows from Theorem \ref{pdegeneration2} that  there is a finite set $S \subset K^{*}$ such that if $\mu^2 \not\in S$, then
 $\mu^2 f_{a_1,a_2;\mu u_1,\mu^{-1}u_2}(x)$  has no multiple roots, and therefore   $f_{0,a;\mu u_1,\mu^{-1}u_2}(x)$
 also has no multiple roots. It follows that $f_{0,a;\mu u_1,\mu^{-1}u_2}(x)$ has no multiple roots for all but finitely many $\mu\in K^{*}$.
 \end{proof}

 \section{Rationality Questions}
 \label{nonex}
 The aim of this section is to discuss the cases in which there are at most two $K_0$-rational points of order $2g+1$ on every odd degree genus $g$ hyperelliptic curve.

 \begin{thm}
 \label{realOddg}

 Let $K_0$ be a subfield of $K$ and  $g \ge 1$ be an integer.  Let us assume that $2g+1$ is not divisible by $\fchar(K)$ and the degree $2g$ monic polynomial
 $$\frac{x^{2g+1}-1}{x-1}=\sum_{i=0}^{2g} x^i \in K_0[x]$$
 does not have a factor in $K_0[x]$ of degree $g$ or equivalently cannot be represented as a product of two degree $g$ polynomials with coefficients in $K_0[x]$.

 Let $f(x)\in K_0[x]$ be a monic degree $2g+1$ polynomial without multiple roots and $\mathcal{C}_f: y^2=f(x)$ be the corresponding odd degree genus $g$ hyperelliptic curve defined over $K_0$.
 Then $\mathcal{C}_f(K_0)$ contains  at most  two torsion points of order $2g+1$.
 \end{thm}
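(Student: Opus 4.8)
The plan is to argue by contradiction, converting a hypothetical third point of order $2g+1$ into a forbidden degree $g$ factorization of $\Phi(x):=\sum_{i=0}^{2g}x^i=(x^{2g+1}-1)/(x-1)$. Suppose $\mathcal{C}_f(K_0)$ contains at least three points of order $2g+1$. Since $\iota$ acts as $-1$ and fixes no point of order $>2$, the number of such points is even, hence at least four; in particular there exist two of them, $P=(a_1,c_1)$ and $Q=(a_2,c_2)$, with $Q\ne P,\iota(P)$. As the fiber of $x$ over $a_1$ consists only of $P$ and $\iota(P)$, we must have $a_1\ne a_2$, and $a_1,a_2,c_1,c_2\in K_0$. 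Theorem \ref{PordK02g+1} then produces $u_1(x),u_2(x)\in K_0[x]$ with $u_1(x)u_2(x)=(x-a_2)^{2g+1}-(x-a_1)^{2g+1}$ and, because $\fchar(K)$ does not divide $2g+1$, with $\deg(u_1)=\deg(u_2)=g$.

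The key step is a Möbius substitution that carries this factorization onto one of $\Phi$. Set $z=(x-a_2)/(x-a_1)$, equivalently $x=(a_2-za_1)/(1-z)$; a direct computation gives
$$x-a_2=\frac{z(a_2-a_1)}{1-z},\qquad x-a_1=\frac{a_2-a_1}{1-z},$$
so that
$$(x-a_2)^{2g+1}-(x-a_1)^{2g+1}=\frac{(a_2-a_1)^{2g+1}(z^{2g+1}-1)}{(1-z)^{2g+1}}.$$
Define $U_i(z):=(1-z)^g\,u_i\!\left(\frac{a_2-za_1}{1-z}\right)$ for $i=1,2$. Expanding $u_i(x)=\sum_{j=0}^{g}b_j x^j$ shows each summand $b_j(a_2-za_1)^j(1-z)^{g-j}$ has degree $g$ in $z$, so $U_i\in K_0[z]$ with $\deg(U_i)\le g$; moreover the coefficient of $z^g$ equals $(-1)^g u_i(a_1)$, which is nonzero by Remark \ref{produ1u2}. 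Hence $\deg(U_1)=\deg(U_2)=g$.

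Finally I would clear denominators. Substituting $x=(a_2-za_1)/(1-z)$ into $u_1(x)u_2(x)$ turns the product into $U_1(z)U_2(z)/(1-z)^{2g}$, and comparing with the display above yields
$$U_1(z)U_2(z)=\frac{(a_2-a_1)^{2g+1}(z^{2g+1}-1)}{1-z}=-(a_2-a_1)^{2g+1}\,\Phi(z).$$
Thus $\Phi(z)=-\,(a_2-a_1)^{-(2g+1)}\,U_1(z)U_2(z)$ is a product of two polynomials of degree $g$ in $K_0[z]$, contradicting the hypothesis on $\Phi$.

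The main obstacle is spotting the substitution $z=(x-a_2)/(x-a_1)$ and then verifying that the $U_i$ are genuine polynomials of degree \emph{exactly} $g$: the nonvanishing $u_i(a_1)\ne0$ from Remark \ref{produ1u2} is precisely what guarantees that no degree is lost under the transformation, so that the resulting factorization of $\Phi$ splits it into two factors of degree $g$ rather than into a degenerate (unbalanced) pair. If desired, one may first normalize $a_1=0$ via Theorem \ref{ordKf2g+1}(b), which simplifies the substitution to $x=a_2/(1-z)$ and links it to Lemma \ref{l2}, but the computation above works directly without this reduction.
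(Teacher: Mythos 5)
Your proof is correct and is essentially the paper's argument: the paper reaches the same contradiction by first shifting $a_1$ to $0$ (Theorem \ref{ordKf2g+1}(b)), then inverting via Lemma \ref{l2}, then shifting by $1$, and the composition of those three changes of variable is precisely your single M\"obius substitution $z=(x-a_2)/(x-a_1)$, which carries the factorization $u_1u_2=(x-a_2)^{2g+1}-(x-a_1)^{2g+1}$ from Theorem \ref{PordK02g+1} onto a degree-$g$ factorization of $(z^{2g+1}-1)/(z-1)$ over $K_0$. Your direct verification that $\deg U_i=g$ via the leading coefficient $(-1)^g u_i(a_1)\ne 0$ (Remark \ref{produ1u2}) plays the same role as the paper's appeal to Lemma \ref{l2}, so the two write-ups differ only in packaging.
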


\begin{proof}
 Assume that $\mathcal{C}_f(K_0)$ contains at least three points of order $2g+1$. Let $P \in \mathcal{C}_f(K_0)$ be one of them. Then $P=(a_1,c_1)$ with
 $$a_1,c_1 \in K_0, c_1 \ne 0,  \ c_1^2=f(a_1).$$
 The point $\iota(P)=(a_1,-c_1)\in \mathcal{C}_f(K_0)$  also has order $2g+1$. Hence there exists another point $Q \in \mathcal{C}_f(K_0)$ of order $2g+1$ that is neither $P$ nor $\iota(P)$. This implies that $Q=(a_2,c_2)$ with
 $$a_2,c_2 \in K_0, c_2 \ne 0,  \ c_2^2=f(a_2), \ a_2 \ne a_1.$$
 In particular, $\mathcal{C}_f(K_0)$ has four distinct points of order $2g+1$,
 \beq
 \label{for2gp1}
 P=(a_1,c_1),\iota(P)=(a_1,-c_1) ,Q=(a_2,c_2), \iota(Q)=(a_2,-c_2)\in \mathcal{C}_f(K_0).
 \eeq
 By Theorem \ref{PordK02g+1} applied to the torsion $K_0$-points $P=(a_1,c_1)$ and $Q=(a_2,c_2)$ of order $2g+1$,   there exist degree $g$ polynomials $u_1(x),u_2(x)\in K_0[x]$ such that
 $$\deg(u_1)=\deg(u_2)=g, \ u_1(x)u_2(x)=(x-a_2)^{2g+1}-(x-a_1)^{2g+1},$$
 $$u_1(a_1)\ne 0, u_2(a_1)\ne 0,  \ u_1(a_2)\ne 0,   u_2(a_2)\ne 0.$$
 This implies that
 \beq
 \label{xa1a2}
 (x-a)^{2g+1}-x^{2g+1}=u_1(x+a_1)u_2(x+a_1)=\tilde{u}_1(x)\tilde{u}_2(x),
 \eeq
 where
 $$a=a_2-a_1 \in K^{*}, \tilde{u}_1(x):=u_1(x+a_1), \tilde{u}_2(x):=u_2(x+a_1).$$
 Clearly, both $\tilde{u}_1(x)$ and $\tilde{u}_2(x)$ are polynomials of degree $g$ with coefficients in $K_0$,
 and their constant terms $\tilde{u}_1(0)=u_1(a_1)$ and   $\tilde{u}_2(0)=u_2(a_1)$ do not vanish. It follows from \eqref{xa1a2} that
 $$\tilde{u}_1(x)\tilde{u}_2(x)=(x-a)^{2g+1}-x^{2g+1}=(-a) \frac{(x-a)^{2g+1}-x^{2g+1}}{x\cdot(-a/x)}.$$
 On the other hand, dividing both sides of the latter equality by $x^{2g}$, we get
 $$\frac{\tilde{u}_1(x)}{x^g} \frac{\tilde{u}_2(x)}{x^g}=
 (-a)
 \frac{(x-a)^{2g+1}-x^{2g+1}}{x^{2g+1}((-a/x)}= (-a)\frac{(1-a/x)^{2g+1}-1}{(-a/x)}.$$
 Since both $\tilde{u}_1(x)$ and $\tilde{u}_2(x)$ are degree $g$ polynomials in $K_0[x]$ with nonzero constant terms, it follows from Lemma \ref{l2}
  that
 there  exist degree $g$ polynomials $w_1(x)$ and $w_2(x)$ in $K_0[x]$ such that
  $$\frac{\tilde{u}_1(x)}{x^g}=w_1(-a/x),  \ \frac{\tilde{u}_1(x)}{x^g}=w_1(-a/x).$$
  This implies that
  $$w_1(-a/x)w_2(-a/x)=(-a)\frac{(1-a/x)^{2g+1}-1}{-a/x}.$$
Hence
  $$w_1(x)w_2(x)=(-a)\frac{(x+1)^{2g+1}-1}{x},$$
 and therefore
 $$\frac{(x+1)^{2g+1}-1}{x}=\frac{w_1(x)}{-a}w_2(x).$$
 It follows that the polynomial
 $$\frac{x^{2g+1}-1}{x-1}=\frac{w_1(x-1)}{-a}w_2(x-1)$$
 splits into a product of two degree $g$ polynomials $w_1(x-1)/(-a)$ and $w_2(x-1)$ with coefficients in $K_0$,
 which contradicts our assumptions. The obtained contradiction proves the desired result.
 \end{proof}

\begin{example}
Suppose that $g=1$ and $\fchar(K)\ne 3$. Assume that
$$\frac{x^3-1}{x-1}=x^2+x+1$$
does not split into a product of linear factors,  i.e.,  $K_0$ does not contain a primitive cubic root of unity.  On the other hand, $f(x)$ is a cubic polynomial and $\mathcal{C}_f$ is an elliptic curve. It follows from Theorem \ref{realOddg} that  $\mathcal{C}_f(K_0)$ contains  at most  two points of order $3$ (which is well known). In this case one may give a direct proof.
Namely, suppose  $\mathcal{C}_f(K_0)$ contains at least three points of order $3$, then one may find two of them say, $P, Q \in \mathcal{C}_f(K_0)$ such that $Q \ne P, \iota(P)=-P$, and therefore the value of the corresponding Weil pairing $e_3(P,Q)$ between them is a primitive cubic root of unity.
Since both $P$ and $Q$ lie in $\mathcal{C}_f(K_0)$, the root
$e_3(P,Q)$ lies in $K_0$, which contradicts our assumptions.
\end{example}

\begin{cor}
\label{realNO}
Suppose that $K$ is the field $\mathbb{C}$ of complex numbers and $K_0$ is its subfield $\mathbb{R}$ of real numbers.  Suppose that $g$ is a positive odd integer, $f(x)\in \mathbb{R}[x]$ is a monic degree $2g+1$ polynomial  without multiple roots, and $\mathcal{C}_f: y^2=f(x)$ is the corresponding odd degree genus $g$ hyperelliptic curve over  $\mathbb{R}$.  Then $\mathcal{C}_f(\mathbb{R})$ contains at most two points of order $2g+1$.
\end{cor}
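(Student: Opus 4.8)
The plan is to deduce this directly from Theorem \ref{realOddg} applied with $K=\mathbb{C}$ and $K_0=\mathbb{R}$. Since $\fchar(\mathbb{C})=0$, the odd integer $2g+1$ is certainly not divisible by the characteristic, so the only hypothesis of Theorem \ref{realOddg} that requires verification is that the monic degree $2g$ polynomial
$$\Phi(x)=\frac{x^{2g+1}-1}{x-1}=\sum_{i=0}^{2g}x^i\in\mathbb{R}[x]$$
cannot be written as a product of two degree $g$ polynomials with real coefficients. Once this is established, Theorem \ref{realOddg} immediately yields that $\mathcal{C}_f(\mathbb{R})$ contains at most two points of order $2g+1$.

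To rule out such a factorization I would analyze the irreducible factorization of $\Phi(x)$ over $\mathbb{R}$. The roots of $\Phi$ are exactly the $(2g+1)$-th roots of unity different from $1$; as $2g+1$ is odd, the only real root of unity of that order is $1$ itself, so every root of $\Phi$ is non-real. Moreover $x^{2g+1}-1$ is separable in characteristic $0$, so these $2g$ roots are distinct and come in $g$ distinct complex-conjugate pairs. Consequently $\Phi$ factors over $\mathbb{R}$ as a product of $g$ pairwise distinct monic irreducible quadratics, one for each conjugate pair.

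A parity argument then finishes the proof. Any monic divisor of $\Phi$ in $\mathbb{R}[x]$ is a product of a subset of these $g$ quadratic factors, hence has even degree. In particular a real factor of degree $g$ can exist only when $g$ is even. Since by hypothesis $g$ is odd, $\Phi$ admits no factorization into two real polynomials of degree $g$, and the hypothesis of Theorem \ref{realOddg} is satisfied.

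I do not anticipate a genuine obstacle here; the one point that must be handled with care is the claim that every irreducible real factor of $\Phi$ is quadratic, which rests on $\Phi$ having no real roots (guaranteed by the oddness of $2g+1$) together with separability in characteristic zero. Everything else is the elementary observation that a product of quadratics has even degree.
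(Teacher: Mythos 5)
Your proposal is correct and takes essentially the same route as the paper: both reduce to Theorem \ref{realOddg} by showing that $(x^{2g+1}-1)/(x-1)$ admits no factorization into two real polynomials of degree $g$, the key point being that oddness of $2g+1$ forces this polynomial to have no real roots. The only cosmetic difference is the final step — the paper notes that each would-be degree-$g$ factor, having odd degree, must possess a real root, whereas you deduce that every real divisor has even degree from the decomposition into $g$ distinct monic irreducible quadratics; these are equivalent elementary facts about real polynomials without real roots.
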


\begin{proof}
Notice that the polynomial $(x^{2g+1}-1)/(x-1)$ has no real roots, because $2g+1$ is odd. Suppose that it splits into a product
$$\frac{ x^{2g+1}-1 }{ x-1 }=w_1(x)w_2(x)$$
 of two real polynomials $w_1(x)$ and $w_2(x)$, both of degree $g$. Since $g$ is odd, both $w_1(x)$ and $w_2(x)$ have a real root, and therefore  $(x^{2g+1}-1)/(x-1)$ also has  a real root, which is not true. So, $(x^{2g+1}-1)/(x-1)$ does not split into a product of two real polynomials of degree $g$. Now the desired result follows from
Theorem \ref{realOddg}.
\end{proof}

\begin{thm}
\label{infinite}
Let $K_0$ be an infinite  subfield of $K$ and  $g \ge 1$ be an integer.  Let us assume that $2g+1$ is not divisible by $\fchar(K)$.
Then the following conditions are equivalent.
\begin{itemize}
\item[(i)]
The degree $2g$ monic polynomial
 $$\frac{x^{2g+1}-1}{x-1}=\sum_{i=0}^{2g} x^i \in K_0[x]$$
 has a factor in $K_0[x]$ of degree $g$ or equivalently can be represented as a product of two degree $g$ polynomials with coefficients in $K_0$.
 \item[(ii)]
There exists  a monic degree $2g+1$ polynomial $f(x)\in K_0[x]$ without multiple roots  that enjoys the following property.
If $\mathcal{C}_f: y^2=f(x)$ is  the corresponding odd degree genus $g$ hyperelliptic curve  defined over $K_0$,
 then $\mathcal{C}_f(K_0)$ contains at least four torsion points of order $2g+1$.
 \end{itemize}
\end{thm}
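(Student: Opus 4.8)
The statement is an equivalence whose two halves are of very different characters, and I would treat them separately.

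For $(ii)\Rightarrow(i)$ I would simply contrapose Theorem~\ref{realOddg}. If the polynomial $(x^{2g+1}-1)/(x-1)$ could \emph{not} be written as a product of two degree $g$ polynomials over $K_0$, then that theorem would force every monic degree $2g+1$ polynomial $f\in K_0[x]$ without multiple roots to give a curve $\mathcal{C}_f$ with at most two points of order $2g+1$ in $\mathcal{C}_f(K_0)$; this directly contradicts the existence of an $f$ as in $(ii)$. Hence $(ii)$ implies $(i)$, with no new work required.

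The real content is $(i)\Rightarrow(ii)$, which I would prove by an explicit construction that reverses the chain of substitutions appearing in the proof of Theorem~\ref{realOddg}. Fix any nonzero $a\in K_0$ and put $a_1=0$, $a_2=a$. Given a factorization $(x^{2g+1}-1)/(x-1)=W_1(x)W_2(x)$ with $W_1,W_2\in K_0[x]$ of degree $g$, I would substitute $t=(x-a)/x$ into the identity $W_1(t)W_2(t)=(t^{2g+1}-1)/(t-1)$ and clear denominators; a short computation gives $u_1(x)u_2(x)=(x-a_2)^{2g+1}-(x-a_1)^{2g+1}$, where $u_1(x)=-a\,x^g W_1\!\left((x-a)/x\right)$ and $u_2(x)=x^gW_2\!\left((x-a)/x\right)$. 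Each $u_i$ is a genuine polynomial of degree exactly $g$ in $K_0[x]$: this is the content of Lemma~\ref{l2}, and concretely the leading coefficient of $x^gW_i((x-a)/x)$ equals $W_i(1)$, which is nonzero because $W_1(1)W_2(1)=2g+1\ne 0$ in $K_0$.

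It now remains to produce from this factorization a curve over $K_0$ with four $K_0$-points of order $2g+1$, and here lies the only genuine obstacle: Theorem~\ref{ordKf2g+1}(c2) supplies such points on $\mathcal{C}_{0,a;u_1,u_2}$ only when $f_{0,a;u_1,u_2}$ has no multiple roots, which the initial pair $(u_1,u_2)$ need not satisfy. To overcome this I would invoke Theorem~\ref{degeneration}: replacing $(u_1,u_2)$ by $(\mu u_1,\mu^{-1}u_2)$ leaves the product — and hence the factorization — unchanged, while for all but finitely many $\mu\in K^{*}$ the polynomial $f_{0,a;\mu u_1,\mu^{-1}u_2}$ has no multiple roots. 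Since $K_0$ is infinite, I may choose such a $\mu$ inside $K_0^{*}$, obtaining a monic degree $2g+1$ polynomial $f\in K_0[x]$ without multiple roots. By Theorem~\ref{ordKf2g+1}(c2) the curve $\mathcal{C}_f$ then carries points $P,Q$ of order $2g+1$ with distinct abscissas $0$ and $a$; since $\iota$ fixes abscissas, $P,\iota(P),Q,\iota(Q)$ are four distinct points of order $2g+1$ in $\mathcal{C}_f(K_0)$, which is exactly $(ii)$. The crux of the whole argument is thus the interplay between the no-multiple-roots requirement and rationality: the scaling family of Theorem~\ref{degeneration} provides infinitely many admissible $f$, and the infiniteness of $K_0$ is precisely what lets us select one of them over $K_0$.
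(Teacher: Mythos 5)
Your proposal is correct and follows essentially the same route as the paper: $(ii)\Rightarrow(i)$ by contraposing Theorem~\ref{realOddg}, and $(i)\Rightarrow(ii)$ by converting the factorization of $(x^{2g+1}-1)/(x-1)$ into a factorization $u_1(x)u_2(x)=(x-a_2)^{2g+1}-(x-a_1)^{2g+1}$ with $\deg u_i=g$ (using $W_i(1)\ne 0$ since $W_1(1)W_2(1)=2g+1$), then invoking Theorem~\ref{degeneration} together with the infiniteness of $K_0$ to choose $\mu\in K_0^{*}$ making $f_{0,a;\mu u_1,\mu^{-1}u_2}$ separable, and concluding via Theorem~\ref{ordKf2g+1}(c2). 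The only cosmetic difference is that you perform the change of variables in one step via the substitution $t=(x-a)/x$ for an arbitrary nonzero $a\in K_0$, whereas the paper fixes $a_2=-1$ and proceeds in two steps (a shift $x\mapsto x+1$ followed by the inversion of Lemma~\ref{l2}).
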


\begin{proof}
The implication (ii) $\Longrightarrow$(i) follows from Theorem \ref{realOddg}  and its proof.

Suppose (i) holds, i.e.,  there exist two  degree $g$ polynomials $w_1(x), w_2(x)\in K_0[x]$
such that
$$w_1(x)w_2(x)=\frac{x^{2g+1}-1}{x-1}=\sum_{i=0}^{2g} x^i.$$
In particular,
$$w_1(1) w_2(1)=2g+1 \ne 0,$$
and therefore $w_1(1)\ne 0, w_2(1)\ne 0$.
This means that
$$\tilde{w}_1(x)\tilde{w}_2(x)=\frac{(x+1)^{2g+1}-1}{x},$$
where
$$\begin{aligned}\tilde{w}_1(x)&=w_1(x+1)\in K_0[x], \ \tilde{w}_2(x)=w_2(x+1)\in K_0[x], \\ \tilde{w}_1(0)&=w_1(1)\ne 0,\ \tilde{w}_2(0)=w_2(1)\ne 0.\end{aligned}$$
Clearly, both $\tilde{w}_1(x), \tilde{w}_2(x)$ are degree $g$ polynomials with nonzero constant terms. We have
\beq\label{oneOverx}
(1+1/x)^{2g+1}- (1/x)^{2g+1}=\frac{(x+1)^{2g+1}-1}{x^{2g+1}}=\frac{\tilde{w}_1(x)}{x^g} \frac{\tilde{w}_2(x)}{x^g}.
\eeq
By Lemma \ref{l2}, there  exist degree $g$ polynomials $u_1(x), u_2(x)\in K_0[x]$ such that
$$u_1(1/x)=\frac{\tilde{w}_1(x)}{x^g}, \ u_2(1/x)=\frac{\tilde{w}_2(x)}{x^g}.$$
It follows from \eqref{oneOverx} that
$$(1+1/x)^{2g+1}- (1/x)^{2g+1}=u_1(1/x)u_2(1/x),$$
and therefore
$$(x+1)^{2g+1}-x^{2g+1}=u_1(x)u_2(x).$$
Since $K_0$ is infinite, it follows from Theorem \ref{degeneration} that there exists $\mu\in K_0^{*}$ such that the polynomial
$$f_{0, -1;\mu u_1,\mu^{-1}u_2}(x)=
 x^{2g+1}+\left(\frac{\mu u_1(x)+\mu^{-1}u_2(x)}{2}\right)^2$$
 has no multiple roots. By Theorem \ref{ordKf2g+1},   the odd degree genus $g$ hyperelliptic curve
 $$\mathcal{C}_{0,-1;\mu u_1\mu^{-1}u_2}: y^2=f_{0,-1;\mu u_1,\mu^{-1}u_2}(x)$$
 over $K_0$ has  two distinct  points
 $$P_{0,-1;\mu u_1,\mu^{-1}u_2}, Q_{0,-1;\mu u_1,\mu^{-1}u_2}\in \mathcal{C}_{0,-1;\mu u_1,\mu^{-1}u_2}(K_0)$$   of order $2g+1$
 with abscissas $0$ and $-1$, respectively, and with nonzero ordinates. Consequently,
 $$P_{0,-1;\mu u_1,\mu^{-1}u_2}, \ Q_{0,-1;\mu u_1,\mu^{-1}u_2}, \ \iota(P_{0,-1;\mu u_1\mu^{-1},u_2}),\
 \iota(Q_{0,-1;\mu u_1,\mu^{-1}u_2}) $$
 are four distinct $K_0$-rational points of order $2g+1$ on $ \mathcal{C}_{0,-1;\mu u_1,\mu^{-1}u_2}$.  This  implies that (ii) holds.
\end{proof}

\begin{example}
\label{realYES}
Suppose that $g=2d$ is an {\sl even} positive integer, $K=\mathbb{C}, K_0=\mathbb{R}$.  Then the $2g$-element set of complex roots of $(x^{2g+1}-1)/(x-1)$ consists of $g$ complex-conjugate pairs $\{\zeta_1,\bar{\zeta}_1; \dots ; \zeta_{g}, \bar{\zeta}_g\}$. Let us consider the degree $g$ polynomials
$$w_1(x)=\prod_{i=1}^d (x-\zeta_i)(x-\bar{\zeta}_i), \ w_2(x)=\prod_{i=d+1}^g (x-\zeta_i)(x-\bar{\zeta}_i).$$
Clearly,
$$w_1(x), w_2(x)\in \mathbb{R}[x], \ w_1(x)w_2(x)=\frac{x^{2g+1}-1}{x-1}.$$
It follows from Theorem \ref{infinite}  that
there exists  a monic degree $2g+1$ polynomial $f(x)\in \mathbb{R}[x]$ without multiple roots  such that the  odd degree genus $g$ hyperelliptic curve
 $\mathcal{C}_f: y^2=f(x)$
  contains at least four {\sl real} torsion points of order $2g+1$.
\end{example}

Theorem \ref{infinite}  suggests the following definition.

\begin{defn}
Let $\varphi(n)$ be Euler's totient function.
An odd integer $2g+1\ge 3$ is called {\sl hyperelliptic} if it enjoys the following obviously equivalent properties.

\begin{itemize}
\item[(i)]
There is a set $S$ of divisors  of $2g+1$ that does {\sl not} contain $1$ and such that
$$\sum_{d\in S}\varphi(d)=g.$$
\item[(ii)]
One may partition the set  of all divisors of   $2g+1$ except $1$ into two nonempty subsets $S_1$ and $S_2$ such that
$$\sum_{d\in S_1}\varphi(d)=\sum_{d\in S_2}\varphi(d).$$
\end{itemize}
\end{defn}

\begin{thm}
\label{hyperNumber}
Suppose that $K$ is the field $\mathbb{C}$ of complex numbers and $K_0$ is its subfield $\mathbb{Q}$  of rational numbers.  Suppose that $g$ is a positive odd integer.
Then the following conditions are equivalent.
\begin{itemize}
\item[(i)]
$2g+1$ is a hyperelliptic number.
\item[(ii)]
There exists a monic degree $2g+1$ polynomial
 $f(x)\in \mathbb{Q}[x]$  with rational coefficients and without multiple roots  that  enjoys the following property. If $\mathcal{C}_f: y^2=f(x)$ is
  the corresponding odd degree genus $g$ hyperelliptic curve
   defined over  $\mathbb{Q}$,  then  $\mathcal{C}_f(\mathbb{Q})$ contains  at least   four torsion points of order $2g+1$.
\end{itemize}
\end{thm}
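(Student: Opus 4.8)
The plan is to combine Theorem~\ref{infinite} with the cyclotomic factorization of $x^{2g+1}-1$, the point being that over $\mathbb{Q}$ the irreducible factors are precisely the cyclotomic polynomials, whose degrees are exactly the values $\varphi(d)$ appearing in the definition of a hyperelliptic number.

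First I would observe that the hypotheses of Theorem~\ref{infinite} are met here: $K_0=\mathbb{Q}$ is infinite and $\fchar(\mathbb{C})=0$ does not divide $2g+1$. Hence condition (ii) of the present theorem is equivalent to condition (i) of Theorem~\ref{infinite}, namely that the monic polynomial $(x^{2g+1}-1)/(x-1)=\sum_{i=0}^{2g}x^i$ admits a degree $g$ factor in $\mathbb{Q}[x]$ (equivalently, splits as a product of two degree $g$ polynomials: the cofactor of a degree $g$ factor automatically has degree $g$, since the total degree is $2g$). It therefore suffices to prove that this factorization property is equivalent to $2g+1$ being a hyperelliptic number.

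Next, writing $n=2g+1$, I would use the factorization $x^n-1=\prod_{d\mid n}\Phi_d(x)$ into cyclotomic polynomials, where $\Phi_1(x)=x-1$, so that
$$\frac{x^{2g+1}-1}{x-1}=\prod_{d\mid n,\ d>1}\Phi_d(x).$$
The key classical input is that each $\Phi_d$ is irreducible over $\mathbb{Q}$ of degree $\varphi(d)$ and that distinct $\Phi_d$ are coprime (their roots are the primitive $d$-th roots of unity, which are disjoint for distinct $d$). Thus the displayed expression is the factorization of $(x^{2g+1}-1)/(x-1)$ into distinct monic irreducibles in $\mathbb{Q}[x]$, and by unique factorization every monic divisor is of the form $\prod_{d\in S}\Phi_d(x)$ for a unique subset $S$ of the divisors $d>1$ of $n$, its degree being $\sum_{d\in S}\varphi(d)$.

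Finally I would match this with the definition: a degree $g$ factor exists exactly when there is a set $S$ of divisors of $2g+1$ not containing $1$ with $\sum_{d\in S}\varphi(d)=g$, which is precisely condition (i) in the definition of a hyperelliptic number. (Since $\sum_{d\mid n,\ d>1}\varphi(d)=n-1=2g$, the complementary set $\{d\mid n,\ d>1\}\setminus S$ then also has totient-sum $g$, recovering the balanced partition of condition (ii).) Concatenating the two equivalences yields (i)$\iff$(ii). The only nonformal ingredient is the irreducibility of the cyclotomic polynomials over $\mathbb{Q}$; once this is granted the argument is pure bookkeeping, so I anticipate no real obstacle. This irreducibility is exactly the feature that distinguishes the $\mathbb{Q}$-case treated here from the $\mathbb{R}$-case of Corollary~\ref{realNO}, where the irreducible factors have degree at most $2$.
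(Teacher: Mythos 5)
Your proposal is correct and follows essentially the same route as the paper: both reduce the statement to condition (i) of Theorem~\ref{infinite} applied with $K_0=\mathbb{Q}$, and then identify the degree $g$ factors of $(x^{2g+1}-1)/(x-1)$ via its factorization into the irreducible cyclotomic polynomials $\Phi_d$ of degrees $\varphi(d)$. Your added remarks (the cofactor automatically having degree $g$, and the complementary set recovering the balanced partition) are correct bookkeeping that the paper leaves implicit.
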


\begin{proof}
Let $D(2g+1)$ be the set of all divisors of $2g+1$ except $1$.  Then the monic
 polynomial
$(x^{2g+1}-1)/(x-1)$ coincides with the product $\prod_{d\in D(2g+1)}\Phi_d(x)$
of distinct cyclotomic polynomials $\Phi_d(x)$, each of which is  irreducible over $\mathbb{Q}$.
This implies that each  factor $w(x)$ of $(x^{2g+1}-1)/(x-1)$ in $\mathbb{Q}[x]$ is of the form $r\cdot \prod_{d\in S}\Phi_d(x)$, where $S$ is a subset in $D(2g+1)$ and $r\in \mathbb{Q}^{*}$. Since $\deg(\Phi_d)=\varphi(d)$,
we have $$\deg(w)=\sum_{d\in S}\varphi(d).$$
The desired result follows readily from Theorem \ref{infinite} applied to $K_0=\mathbb{Q}$.
\end{proof}

\begin{example}
\label{EXoverQ}
Let  $K_0=\mathbb{Q}, K=\mathbb{C}$.
\begin{itemize}
\item[(i)]
 Let us take $g=52$. Then
$2g+1=105=3 \cdot 5 \cdot 7,$
$$\varphi(105)=48,\ \varphi(5)=4, \ 52=48+4=\varphi(105)+\varphi(5).$$
Hence $105$ is a hyperelliptic number and there exists a degree $105$ polynomial $f(x)\in \mathbb{Q}[x]$ without multiple roots such that
the corresponding odd degree genus $52$ hyperelliptic $\mathbb{Q}$-curve $\mathcal{C}_f: y^2=f(x)$ has at least   four $\mathbb{Q}$-points
of order $105$.
\item[(ii)]
 Let us take $g=82$. Then
$2g+1=165=3 \cdot 5 \cdot 11,$
$$\varphi(165)=80, \varphi(3)=2, \ 82=80+2=\varphi(165)+\varphi(3).$$
This implies that $165$ is a hyperelliptic number and
 there exists a degree $165$ polynomial $f(x)\in \mathbb{Q}[x]$ without multiple roots such that
the corresponding odd degree genus $82$ hyperelliptic $\mathbb{Q}$-curve $\mathcal{C}_f: y^2=f(x)$ has  at least   four $\mathbb{Q}$-points
of order $165$.
\end{itemize}
\end{example}

\begin{cor}\label{overQ}
Suppose that $K$ is the field $\mathbb{C}$ of complex numbers and $K_0$ is its subfield $\mathbb{Q}$  of rational numbers.  Suppose that $g$ is a positive  integer enjoying one of the following properties.

\begin{itemize}
\item[(i)]
There exist a prime $\ell$ and a positive integer $k$ such that $2g+1=\ell^k$.
\item[(ii)]
There exist distinct odd primes $\ell_1$ and  $\ell_2$, and  positive integers $k_1$ and $k_2$ such that $2g+1=\ell_1^{k_1}\ell_2^{k_2}$.
\item[(iii)]
There exist distinct odd primes $\ell_1$, $\ell_2$, $\ell_3$ and  positive integers $k_1, k_2, k_3$ such that $2g+1=\ell_1^{k_1}\ell_2^{k_2}\ell_3^{k_3}$
and none of $\ell_i$ is $3$.
\item[(iv)] $g \le 100$ and $g \not\in\{52,82\}$.
\end{itemize}

Then:
\begin{itemize}
\item[(i)]
$2g+1$ is not a hyperelliptic number.
\item[(ii)]
Let $f(x)\in \mathbb{Q}[x]$ be a monic degree $2g+1$ polynomial    without multiple roots  and $\mathcal{C}_f: y^2=f(x)$ the corresponding odd degree genus $g$ hyperelliptic curve   defined over  $\mathbb{Q}$. Then
 $\mathcal{C}_f(\mathbb{Q})$ contains  at most  two points of order $2g+1$.
 \end{itemize}
\end{cor}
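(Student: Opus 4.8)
The plan is to establish the arithmetic statement (i) --- that $n:=2g+1$ is not a hyperelliptic number --- in each of the four cases, and then read off the geometric statement (ii). For this last deduction I would apply Theorem~\ref{hyperNumber} (its proof uses only the factorization of $(x^{n}-1)/(x-1)$ into cyclotomic polynomials together with Theorem~\ref{infinite}, and never the parity of $g$, so the equivalence it records holds for every $g\ge1$): since $n$ is not hyperelliptic, no monic squarefree $f\in\mathbb{Q}[x]$ of degree $n$ yields a curve $\mathcal{C}_f$ with at least four rational points of order $n$. As the points of order $n>2$ come in pairs $\{P,\iota(P)\}$ with $\iota(P)\ne P$, their number is even; hence ``not at least four'' means at most two, which is (ii).

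The heart of the matter is the elementary criterion I would isolate as a lemma: \emph{if $\varphi(n)>(n-1)/2$, then $n$ is not hyperelliptic.} Writing $D(n)$ for the set of divisors of $n$ exceeding $1$, we have $\sum_{d\in D(n)}\varphi(d)=n-1$, and $n$ is the unique divisor contributing the term $\varphi(n)$. Were some $S\subseteq D(n)$ to satisfy $\sum_{d\in S}\varphi(d)=(n-1)/2$, then $n\in S$ would force this sum to be $\ge\varphi(n)>(n-1)/2$, whereas $n\notin S$ would force it to be $\le (n-1)-\varphi(n)<(n-1)/2$ --- impossible either way. Since $\varphi(n)/n=\prod_{p\mid n}(1-1/p)$ depends only on the prime support of $n$, the hypothesis follows from $\prod_{p\mid n}(1-1/p)>\tfrac12$, which I would check by minimizing over the smallest admissible primes: $1-\tfrac13=\tfrac23$ for one prime (case (i)); $(1-\tfrac13)(1-\tfrac15)=\tfrac{8}{15}$ for two distinct odd primes (case (ii)); and $(1-\tfrac15)(1-\tfrac17)(1-\tfrac1{11})=\tfrac{48}{77}$ for three distinct primes all $\ge5$ (case (iii)). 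Each exceeds $\tfrac12$, so $\varphi(n)>n/2>(n-1)/2$ throughout cases (i)--(iii).

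For case (iv) I would combine the criterion with a parity remark: as $n$ is odd, every divisor $d>1$ satisfies $d\ge3$, so every $\varphi(d)$ is even, every subset sum is even, and $n$ is never hyperelliptic when $g$ is odd. Now $g\le100$, $g\notin\{52,82\}$ gives $n\le201$. If $n$ has at most two distinct prime factors the criterion applies. If $n$ has exactly three, then since $5\cdot7\cdot11=385>201$ its smallest prime must be $3$, and a short enumeration (any prime raised above the first power already exceeds $201$) leaves only $n\in\{105,165,195\}$; here $105$ and $165$ are the excluded genera $g=52,82$, while $195$ has $g=97$ odd and is killed by the parity remark. Four or more distinct primes are impossible for $n\le201$ because $3\cdot5\cdot7\cdot11=1155$. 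This settles (iv).

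I expect the main conceptual step to be spotting the ``largest-factor'' criterion $\varphi(n)>(n-1)/2$, after which cases (i)--(iii) reduce to three one-line estimates; the only genuinely fiddly part is the finite bookkeeping of case (iv), namely verifying that among $n\le201$ the sole competitor not excluded by hypothesis, $n=195$, is eliminated by parity rather than by the size of $\varphi(n)$.
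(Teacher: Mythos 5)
Your proof is correct. For cases (i)--(iii) it is essentially the paper's own argument: both hinge on the fact that the largest divisor $2g+1$ contributes $\varphi(2g+1)$ to one side of any putative partition, together with the identical lower bounds $\tfrac{2}{3}$, $\tfrac{8}{15}$, and $\tfrac{48}{77}$ for $\varphi(2g+1)/(2g+1)$; your packaging of this as the clean criterion ``$\varphi(n)>(n-1)/2$ implies $n$ is not hyperelliptic'' (including the complementary estimate when $n\notin S$, which the paper does not need because it works with the partition form of the definition) is a mild tidying rather than a new idea. The genuine divergence is in case (iv). The paper first invokes Corollary \ref{realNO} to discard odd $g$ (no four \emph{real} points of order $2g+1$), then enumerates the even $g\equiv 4\pmod 6$ in $(52,100]$ and factors each $2g+1$; you instead eliminate odd $g$ by the purely arithmetic parity remark that every $\varphi(d)$ with $d\mid 2g+1$, $d>1$, is even, so no subset sum can equal an odd $g$ --- this is exactly the arithmetic shadow of the paper's real-root argument, since the complex roots of $(x^{2g+1}-1)/(x-1)$ pair off under conjugation --- and then you enumerate directly the $n\le 201$ with three distinct prime factors, finding $\{105,165,195\}$ and killing $195$ (i.e., $g=97$, odd) by parity. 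Your route stays entirely inside elementary arithmetic and avoids the detour through $\mathbb{R}$, at the cost of redoing a small enumeration the paper organizes by congruences. One further point in your favor: you explicitly flag, and correctly justify, that Theorem \ref{hyperNumber} is stated only for odd $g$ while its proof never uses parity; this patch is actually needed, since the corollary concerns even $g$ as well and the paper applies Theorem \ref{hyperNumber} there without comment. Your final step, deducing ``at most two'' from ``not at least four'' via the evenness of the number of points of order $n>2$, matches the standing observation in Section \ref{prel}.
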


\begin{proof}
In light of Theorem \ref{hyperNumber}, it suffices to check that $2g+1$ is {\sl not} a hyperelliptic number.  Let us assume the contrary, i.e., that
one may partition $D(2g+1)$ into two  subsets $S_1$ and $S_2$ such that
$$\sum_{d\in S_1}\varphi(d)
=g=\sum_{d\in S_2}\varphi(d).$$

  Case (i). We have $\ell\ge 3$ and
   $$\varphi(2g+1)=
   (\ell-1)\ell^{k-1}\ge \frac{2}{3}\ell^k> \frac{2}{3}{2g}=\frac{4}{3}g>g.$$

   Case (ii).  We may assume that $\ell_2>\ell_1$, and therefore
   $\ell_1 \ge 3, \ell_2 \ge 5$. We have
   $$\varphi(2g+1)=(\ell_1-1)\ell_1^{k-1}(\ell_2-1)\ell_2^{k_2-1}\ge $$
  $$ \frac{2}{3}\ell_1^{k_1}\cdot \frac{4}{5}\ell_2^{k_2}=\frac{8}{15}(\ell_1^{k_1}\cdot \ell_2^{k_2})=\frac{8}{15}(2g+1)>\frac{16}{15}g>g.$$

  Case (iii).  We may assume that $\ell_3>\ell_2>\ell_1>3$, and therefore
  $$\ell_1 \ge 5, \ell_2 \ge 7, \ell_3 \ge 11.$$
   We have
  $$\varphi(2g+1)=(\ell_1-1)\ell_1^{k-1}(\ell_2-1)\ell_2^{k_2-1}(\ell_3-1)\ell_3^{k_3-1}\ge $$
  $$ \frac{4}{5}\ell_1^{k_1}\cdot \frac{6}{7}\ell_2^{k_2}\cdot \frac{10}{11}\ell_3^{k_3}=\frac{48}{77}(\ell_1^{k_1} \ell_2^{k_2}\ell_3^{k_3})=\frac{48}{77}(2g+1)>\frac{96}{77}g>g.$$
In all three cases $\varphi(2g+1)>g$.  Since $2g+1 \in S_i$ for $i=1$ or $2$,
$$g=\sum_{d\in S_i}\varphi(d) \ge \varphi(2g+1)>g,$$
which gives us a desired contradiction.

Let us assume that  case (iv) holds.
It follows from Corollary \ref{realNO} that we may assume that $g$ is even.
We may also assume that $g$ satisfies neither (i) nor (ii).
Since $g$ satisfies neither (i) nor (ii),   $2g+1$ is divisible by at least three distinct odd primes,
hence $2g+1 \ge 3 \cdot 5 \cdot 7=105$, i.e., $g >51$.  So, we may assume that $52<g\le 100$.

If $2g+1$ is not divisible by $3$, then
 $2g+1\ge  5\cdot 7\cdot 11=385$, i.e., $g>191>118$. Hence $2g+1$ is divisible by $3$. Since $g$ is even, it is congruent to $4$ modulo $6$.
This implies that $g \in \{58,64,70,76,88, 94,100\}$. However,
$$2\cdot 58+1=3^2\cdot 13,\ 2\cdot 64+1=3 \cdot 43,\ 2\cdot 70+1=3\cdot 47,\ 2\cdot 76+1=3^2\cdot 17,$$
$$2\cdot 88+1=3\cdot 59,\ 2\cdot 94+1=3^3\cdot 7,\ 2\cdot 100+1=3\cdot 67.$$
Consequently, every $g \in \{58,64,70,76, 88, 94, 100\}$ satisfies (ii). This ends the proof.
\end{proof}

\begin{Rem} Our results show that there are only two hyperelliptic numbers $2g+1 \le 201$,
namely, $105$ and $165$.
However, recently Vlad Matei \cite{Matei} proved that the set of hyperelliptic numbers is infinite.
\end{Rem}

The following assertion may be viewed as a counterpart in characteristic zero to Theorem \ref{char2gPlus1}.

\begin{thm}\label{Ladic}
Let $\ell$ be an odd prime and  $K_0$ a complete discrete valuation field of characteristic $0$ with residue field of characteristic
$\ell$ and such that  the ramification index $e_K$ is $1$, i.e., $\ell$ is a uniformizer. (E.g., $K_0$ is the
field $\mathbb{Q}_{\ell}$ of $\ell$-adic numbers or its finite unramified extension). Let $K$ be an algebraic closure of $K_0$.
Suppose that there exists a positive integer $k$ such that $g=(\ell^k-1)/2$, i.e., $2g+1=\ell^k$.

Let $f(x)\in K_0[x]$ be a monic degree $\ell^k$ polynomial without multiple roots and
$\mathbb{C}_f: y^2=f(x)$ the corresponding odd degree genus $(\ell^k-1)/2$ hyperelliptic curve over $K_0$.
Then   $\mathbb{C}_f(K_0)$ has at most  two points of order $\ell^k$.
\end{thm}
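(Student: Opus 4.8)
The plan is to argue by contradiction, turning the existence of a third point of order $\ell^k$ into a factorization of $(x-a_2)^{\ell^k}-(x-a_1)^{\ell^k}$ over $K_0$ that the arithmetic of the $\ell$-adic field forbids. This is the exact $\ell$-adic shadow of the mechanism in Theorem \ref{char2gPlus1}, where in residue characteristic $\ell$ the same difference collapses to a constant.

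First I would suppose that $\mathcal{C}_f(K_0)$ contains at least three points of order $\ell^k$. Since a point $R$ of order $\ell^k>2$ and $\iota(R)$ are distinct but share the same abscissa, three such points produce two with \emph{distinct} abscissas, say $P=(a_1,c_1)$ and $Q=(a_2,c_2)$ in $\mathcal{C}_f(K_0)$ with $a_1\ne a_2$ and $Q\ne P,\iota(P)$. Applying Theorem \ref{PordK02g+1} yields $u_1(x),u_2(x)\in K_0[x]$ with $u_1(x)u_2(x)=(x-a_2)^{\ell^k}-(x-a_1)^{\ell^k}$, and since $\fchar(K_0)=0$ does not divide $\ell^k$, part (iii) forces $\deg u_1=\deg u_2=g=(\ell^k-1)/2$. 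Thus it suffices to show that $F(x):=(x-a_2)^{\ell^k}-(x-a_1)^{\ell^k}$ admits no factor of degree $g$ in $K_0[x]$.

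The heart of the argument is the factorization of $F$ over $K_0$. Its $2g$ distinct roots are $x_\zeta=a_1+\frac{a_2-a_1}{1-\zeta}$, where $\zeta$ runs over the $\ell^k$-th roots of unity with $\zeta\ne1$; I would group them into levels $1\le j\le k$, level $j$ consisting of the $\varphi(\ell^j)$ roots with $\zeta$ of exact order $\ell^j$. The key arithmetic input is that $K_0$ is absolutely unramified ($e_K=1$), so $K_0\supseteq\mathbb{Q}_\ell$ with $K_0/\mathbb{Q}_\ell$ unramified, whereas $\mathbb{Q}_\ell(\zeta_{\ell^j})/\mathbb{Q}_\ell$ is totally ramified of degree $\varphi(\ell^j)$; by linear disjointness $[K_0(\zeta_{\ell^j}):K_0]=\varphi(\ell^j)$ and the Galois action on primitive $\ell^j$-th roots of unity is transitive. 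Since $a_1,a_2\in K_0$ and $\sigma(x_\zeta)=x_{\sigma(\zeta)}$, each level is a single Galois orbit, so the corresponding monic level factor of degree $\varphi(\ell^j)$ is irreducible over $K_0$. Consequently every monic factor of $F$ in $K_0[x]$ has degree $\sum_{j\in S}\varphi(\ell^j)$ for some subset $S\subseteq\{1,\dots,k\}$.

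It then remains to check the combinatorial impossibility that some such subset sum equals $g$. Multiplying the desired identity $\sum_{j\in S}\varphi(\ell^j)=g$ by $2$ and using $\sum_{j=1}^k\varphi(\ell^j)=\ell^k-1=2g$, one would need $2\sum_{j\in S}\varphi(\ell^j)=\ell^k-1$; reducing modulo $\ell$, the terms $\varphi(\ell^j)=(\ell-1)\ell^{j-1}$ with $j\ge2$ vanish and $\varphi(\ell)\equiv-1$, so the left-hand side is congruent to $0$ or $-2$ while the right-hand side is $\equiv-1$, which is impossible for an odd prime $\ell$. This contradiction shows that the degree-$g$ factor demanded by Theorem \ref{PordK02g+1}(iii) cannot exist, so $\mathcal{C}_f(K_0)$ has at most two points of order $\ell^k$. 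The main obstacle is the ramification-theoretic step identifying the factorization levels and their irreducibility — precisely where the hypothesis $e_K=1$ is indispensable, mirroring the collapse of $(x-a_2)^{\ell^k}-(x-a_1)^{\ell^k}$ to a constant in Theorem \ref{char2gPlus1}; for $g=1$ this recovers the familiar statement via the $3$-torsion Weil pairing.
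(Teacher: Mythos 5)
Your argument is correct, but there is a wrinkle in the comparison: the paper states Theorem \ref{Ladic} \emph{without proof}, so what you have written is in effect the missing proof rather than a variant of a printed one. Moreover, your proof runs on exactly the rails the paper lays down elsewhere: your orbit analysis of the roots $x_\zeta=a_1+\frac{a_2-a_1}{1-\zeta}$ of $(x-a_2)^{\ell^k}-(x-a_1)^{\ell^k}$ re-derives, in this special case, the reduction the paper has already packaged as Theorem \ref{realOddg} (via Lemma \ref{l2}, a degree-$g$ factorization of that difference over $K_0$ is equivalent to writing $(x^{2g+1}-1)/(x-1)$ as a product of two degree-$g$ polynomials over $K_0$). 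So you could compress the middle of your proof by invoking Theorem \ref{realOddg} directly and verifying its hypothesis: $(x^{\ell^k}-1)/(x-1)=\prod_{j=1}^{k}\Phi_{\ell^j}(x)$ with each $\Phi_{\ell^j}$ irreducible over $K_0$, and no subset of $\{\varphi(\ell^j): 1\le j\le k\}$ sums to $g$ --- your mod-$\ell$ computation, which is correct: the subset sum doubled is $\equiv 0$ or $-2 \pmod{\ell}$ while $\ell^k-1\equiv -1$. This is precisely the $\ell$-adic analogue of how the paper treats $K_0=\mathbb{Q}$ in Theorem \ref{hyperNumber}, with irreducibility of $\Phi_{\ell^j}$ over $K_0$ replacing irreducibility over $\mathbb{Q}$.

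One step deserves tightening. The hypothesis on $K_0$ is only completeness with $e_K=1$; $K_0$ need not be \emph{algebraic} over $\mathbb{Q}_\ell$ (fraction fields of Witt vectors of large residue fields are allowed), so the appeal to linear disjointness of an unramified with a totally ramified extension is not literally available in general. The uniform fix is the Eisenstein criterion: since $x^{\ell^j}-1\equiv (x-1)^{\ell^j}\pmod{\ell}$, one has $\Phi_{\ell^j}(x+1)\equiv x^{\varphi(\ell^j)}\pmod{\ell}$ with constant term $\Phi_{\ell^j}(1)=\ell$, and because $\ell$ is a uniformizer of the valuation ring of $K_0$ (this is exactly where $e_K=1$ enters), the shifted polynomial is Eisenstein, hence irreducible over $K_0$. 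This simultaneously gives $[K_0(\zeta_{\ell^j}):K_0]=\varphi(\ell^j)$ and the transitivity of the Galois action on primitive $\ell^j$-th roots of unity that your single-orbit claim for each level requires; everything else in your proof (the use of Theorem \ref{PordK02g+1}(iii) to produce the degree-$g$ factor, and the subset-sum contradiction) is sound as written.
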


\section{Odd degree genus $g$ hyperelliptic curves with two pairs of torsion points of order $2g+1$.}\label{sec3}
\label{closedK}

In this section we assume that $K$ is an algebraically closed field of characteristic $\neq2$.
We will need the following definition.

\begin{defn}
\label{nice}
Let $g$ be a positive integer.
An ordered pair of polynomials
$$u_1(x), u_2(x)\in K[x]$$
is called a {\sl nice pair} of degree $g$ over $K$ if it enjoys the following properties.

\begin{itemize}
\item[(i)]  $\deg(u_1)\le g, \ \deg(u_2)\le g$.
\item[(ii)]  $u_1(x)u_2(x)=(x+1)^{2g+1}-x^{2g}$.
\item[(iii)] If $\fchar(K)$ does {\sl not} divide $2g+1$, then
 $\deg(u_1)= g,  \deg(u_2)=g.$
 \item[(iv)]
 $u_1^{\prime}(x)\ne 0, \ u_2^{\prime}(x)\ne 0.$
\end{itemize}
If $(u_1(x), u_2(x))$ is a nice pair of degree $g$ and the polynomial
$$\begin{aligned}f(x)=f_{0,-1; u_1,u_2}=x^{2g+1}+\left(\frac{u_1(x)+u_2(x)}{2}\right)^2\\=(x+1)^{2g+1}+\left(\frac{u_1(x)-u_2(x)}{2}\right)^2\end{aligned}$$
has {\sl no} multiple roots, then the pair $(u_1(x), u_2(x))$ is called {\sl very nice}.
\end{defn}

\begin{Rem}
\label{niceR}
Suppose that  $(u_1(x), u_2(x))$ is a nice pair of degree $g$.
\begin{itemize}
\item[(i)]
It follows from Remark \ref{produ1u2} that
$$u_1(0)\ne 0, u_2(0) \ne 0, \ u_2(-1)\ne 0, u_2(-1)\ne 0.$$
 In particular,
 $$u_2(x) \ne \pm u_1(x).$$
In addition, if $(u_1(x), u_2(x))$ is very nice, then it follows from Theorem \ref{ordKf2g+1}
that
$$u_1(0)+u_2(0) \ne 0, \ u_2(-1)-u_2(-1)\ne 0.$$
\item[(ii)]
Obviously, the pairs $$(-u_1(x), -u_2(x)),\ (u_2(x), u_1(x)),\  (-u_2(x), -u_1(x))$$
are also nice of degree $g$. It follows from (i) that all four nice pairs (including $(u_1(x),u_2(x))$
are {\sl distinct}.
However, they all give rise to the same polynomial $f(x)$ (see Remark \ref{changesign}).
In particular, they all are very nice if and only if $(u_1(x),u_2(x))$ is very nice.
\item[(iii)]
If $\mu \in K^{*}$, then obviously $(\mu u_1(x), \mu^{-1}u_2(x))$ is a nice pair of degree $g$.
It follows from Theorems \ref{pdegeneration2}   and  \ref{degenerationCHARp} that
$(\mu u_1(x), \mu^{-1}u_2(x))$ is actually very nice for all but finitely many $\mu$.
\item[(iv)]
Let $(w_1(x),w_2(x))$ be a nice pair of degree $g$ such that
$$f_{0,-1; w_1,w_2}(x)=f_{0,-1; u_1,u_2}(x).$$
Then $(w_1(x),w_2(x))$ is one of four pairs described in (ii). Indeed, we immediately get
$$\begin{aligned}\left(\frac{w_1(x)+w_2(x)}{2}\right)^2=\left(\frac{u_1(x)+u_2(x)}{2}\right)^2, \\
\left(\frac{w_1(x)+w_2(x)}{2}\right)^2=\left(\frac{u_1(x)+u_2(x)}{2}\right)^2.\end{aligned}$$
It follows that we have  at most  four choices for $$(w_1(x)+w_2(x), w_1(x)-w_2(x)),$$
and therefore  at most  four choices for  $(w_1(x),w_2(x))$. However, in (ii) we already
described the four choices,  and therefore $(w_1(x),w_2(x))$ is one of them.
\end{itemize}
\end{Rem}

\begin{defn}
A monic degree $2g+1$ polynomial $f(x)\in K[x]$ is called {\sl decorated}
if there exists a nice pair $(u_1(x),u_2(x))$ of degree $g$ such that
$f(x)=f_{0,-1; u_1,u_2}(x)$. If this is the case, then $(u_1(x),u_2(x))$ is called a {\sl decoration}
of $f(x)$. It follows from Remark \ref{niceR} that a decorated polynomial admits precisely
four decorations.
\end{defn}

These definitions allow us to restate results of Section \ref{twoPoints} in the following way.

\begin{thm}
\label{restate}
Let $f(x)$ be a monic polynomial of degree $2g+1$ without multiple roots, and let
$\mathcal{C}_f:y^2=f(x)$ be the corresponding odd degree genus $g$ hyperelliptic curve over $K$.
\begin{itemize}
\item[(i)]
Let $P$ and $Q$ be points in $\mathcal{C}_f(K)$ such that
$$x(P)=0,\ x(Q)=-1.$$
Then both $P$ and $Q$ have order $2g+1$ if and only if $f(x)$ is decorated.
\item[(ii)]
Suppose that $f(x)$ is decorated. Then each decoration $(u_1(x),u_2(x))$ of $f(x)$
gives rise to points
\begin{equation}
\label{PQu}
P_{u_1,u_2}:=\left(0, \frac{u_1(0)+u_2(0)}{2}\right), \ Q_{u_1,u_2}:=\left(-1,\frac{u_1(-1)-u_2(-1)}{2}\right) \in \mathcal{C}_f(K)
\end{equation}
of order $2g+1$.

Conversely, for each pair of points $P, Q \in \mathcal{C}_f(K)$  with
$$x(P)=0,\ x(Q)=-1$$
there exists exactly one decoration
 $(u_1(x),u_2(x))$ of $f(x)$ such that
 \begin{equation}
 \label{PQu1u2}
 P=\left(0, \frac{u_1(0)+u_2(0)}{2}\right),\ Q=\left(-1, \frac{u_1(-1)-u_2(-1)}{2}\right).
 \end{equation}
 In addition, both $P$ and $Q$ have order $2g+1$.
 \end{itemize}
\end{thm}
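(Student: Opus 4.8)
The plan is to read Theorem \ref{restate} as a dictionary entry: it simply translates Theorems \ref{PordK02g+1} and \ref{ordKf2g+1}, specialized to $K_0=K$, $a_1=0$, $a_2=-1$, into the language of nice pairs and decorations. Throughout I would use the standing observation that a point of order $2g+1$ is never a Weierstrass point (since $2g+1>2$), so a point with abscissa $\alpha$ and order $2g+1$ forces $f(\alpha)\ne 0$; the fibre of $\pi$ over $\alpha$ then consists of exactly two points interchanged by $\iota$, both of the same order. In particular $P_0\ne\iota(P_0)$ and $Q_0\ne\iota(Q_0)$ whenever these points have order $2g+1$.

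For part (i), first suppose both $P$ and $Q$ have order $2g+1$. Since $x(P)=0\ne-1=x(Q)$, we have $Q\ne P,\iota(P)$, so Theorem \ref{PordK02g+1} produces a unique ordered pair $(u_1(x),u_2(x))\in K[x]$ with $u_1(x)u_2(x)=(x+1)^{2g+1}-x^{2g+1}$ and, by its part (v), $f=f_{0,-1;u_1,u_2}$. Conditions (i) and (iii) of that theorem are exactly conditions (i)--(iii) of a nice pair, and since $f$ has no multiple roots, Theorem \ref{ordKf2g+1}(c1) supplies $u_1'(x)\ne 0$, $u_2'(x)\ne 0$, which is condition (iv). Hence $(u_1,u_2)$ is a nice pair and $f$ is decorated. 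Conversely, if $f=f_{0,-1;u_1,u_2}$ is decorated, then Theorem \ref{ordKf2g+1}(c2) yields a point of order $2g+1$ over abscissa $0$ and one over abscissa $-1$; together with their $\iota$-images these exhaust the two fibres, so \emph{any} $P,Q$ with $x(P)=0$, $x(Q)=-1$ automatically have order $2g+1$.

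For part (ii), the first assertion is precisely Theorem \ref{ordKf2g+1}(c2). For the converse I would fix $P,Q$ with $x(P)=0$, $x(Q)=-1$; by part (i) they have order $2g+1$ and $f$ is decorated. By Remark \ref{niceR} the decorations of $f$ are the four distinct nice pairs $(u_1,u_2)$, $(u_2,u_1)$, $(-u_1,-u_2)$, $(-u_2,-u_1)$. Writing $P_0=P_{u_1,u_2}$ and $Q_0=Q_{u_1,u_2}$, the sign-change formulas of Remark \ref{changesign} show that these four decorations produce, respectively, the point-pairs $(P_0,Q_0)$, $(P_0,\iota(Q_0))$, $(\iota(P_0),Q_0)$, $(\iota(P_0),\iota(Q_0))$. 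Since $P_0\ne\iota(P_0)$ and $Q_0\ne\iota(Q_0)$, these four pairs are distinct and constitute exactly the product of the two fibres $\{P_0,\iota(P_0)\}\times\{Q_0,\iota(Q_0)\}$; as $P$ lies in the first factor and $Q$ in the second, precisely one decoration realizes the prescribed $(P,Q)$, giving existence and uniqueness.

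The argument is essentially bookkeeping, since the substantive content is already established. The one step that genuinely uses a hypothesis is verifying that the pair produced by Theorem \ref{PordK02g+1} is \emph{nice} in the full sense, namely that its derivatives do not vanish; this is exactly where the no-multiple-roots assumption enters, through Theorem \ref{ordKf2g+1}(c1). The only other point needing care is the enumeration in part (ii): one must confirm via Remark \ref{changesign} that the four decorations biject with the four elements of the product of the two fibres, a conclusion that rests on the inequalities $P_0\ne\iota(P_0)$ and $Q_0\ne\iota(Q_0)$.
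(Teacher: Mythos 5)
Your proof is correct, and for part (i) and the first assertion of part (ii) it follows the paper's own argument essentially step for step: the forward direction of (i) via Theorem \ref{PordK02g+1} combined with Theorem \ref{ordKf2g+1}(c1) (which is exactly where, as you note, the no-multiple-roots hypothesis supplies $u_1^{\prime}(x)\ne 0$, $u_2^{\prime}(x)\ne 0$), and the converse via Theorem \ref{ordKf2g+1}(c2) together with the observation that any $P,Q$ over the abscissas $0,-1$ must coincide with the produced torsion points or their $\iota$-images and hence have the same order. Where you genuinely diverge is the existence-and-uniqueness clause at the end of (ii): the paper disposes of it by invoking the uniqueness statement of Theorem \ref{PordK02g+1} (applied with $a_1=0$, $a_2=-1$), whereas you enumerate the four decorations $(u_1,u_2)$, $(u_2,u_1)$, $(-u_1,-u_2)$, $(-u_2,-u_1)$ supplied by Remark \ref{niceR} and match them, via the sign-change formulas of Remark \ref{changesign}, bijectively onto the four elements of $\{P_0,\iota(P_0)\}\times\{Q_0,\iota(Q_0)\}$. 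This is a legitimate alternative; it buys an explicit combinatorial picture (four decorations, four point-pairs, one bijection) at the cost of needing the distinctness inputs $P_0\ne\iota(P_0)$ and $Q_0\ne\iota(Q_0)$, which you correctly secure from the order being $2g+1>2$, while the paper's route gets uniqueness more directly from the uniqueness of the pair $(v_1,v_2)$ underlying Theorem \ref{PordK02g+1}. Two cosmetic points: in your list you transposed the pairs produced by $(-u_1,-u_2)$ and $(-u_2,-u_1)$ --- by Remark \ref{changesign} one has $(-u_1,-u_2)\mapsto(\iota(P_0),\iota(Q_0))$ and $(-u_2,-u_1)\mapsto(\iota(P_0),Q_0)$ --- but since your argument uses only that the four decorations biject onto the four pairs, nothing breaks; and you silently (and correctly) read the defining identity of a nice pair as $u_1(x)u_2(x)=(x+1)^{2g+1}-x^{2g+1}$, repairing the evident typo $x^{2g}$ in Definition \ref{nice}.
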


\begin{proof}
(i) Suppose  $P$ and $Q$ have order $2g+1$. It follows from Theorem \ref{PordK02g+1}
and Theorem \ref{ordKf2g+1}(c1) applied to $a_1=0, a_2=-1$ that $f(x)$ is decorated.
Conversely, suppose $f(x)$ is decorated. It follows from Theorem  \ref{ordKf2g+1}(c1) applied to $a_1=0, a_2=-1$
that there exist torsion points
$P_1,Q_1\in \mathcal{C}_f(K)$ of order $2g+1$ such that
$$x(P_1)=0, \ x(Q_1)=-1.$$
This implies that $P=P_1$ or $P=\iota(P_1)$ and $Q=Q_1$ or $Q=\iota(Q_1)$. In all the cases,
$P$ and $P_1$ have the same order,  $Q$ and $Q_1$ have the same order. This implies that both $P$ and $Q$ have order $2g+1$.

(ii) Suppose that $f(x)$ is decorated.

Let $(u_1(x),u_2(x))$ be a decoration of $f(x)$. It follows from Theorem  \ref{ordKf2g+1}(c1) applied to $a_1=0, a_2=-1$
that $P_{u_1,u_2}$ and $Q_{u_1,u_2}$ are indeed torsion points in $\mathcal{C}_f(K)$ and have order $2g+1$.

Let $P, Q \in \mathcal{C}_f(K)$  and
$x(P)=0, x(Q)=-1$. It follows from (i) that both $P$ and $Q$ have  order $2g+1$.
Now it follows from Theorem \ref{PordK02g+1}
and Theorem \ref{ordKf2g+1}(c1) applied to $a_1=0, a_2=-1$
that there is precisely one decoration $(u_1(x),u_2(x))$ of $f(x)$ such that $P$ and $Q$ are defined
by \eqref{PQu1u2}.
\end{proof}

\begin{defn}
\begin{itemize}
\item[(i)]
An {\textsl{enhanced  hyperelliptic curve}} of genus $g$ over $K$ is an ordered
quadruple $(\mathcal C, O, P, Q)$, where $(\mathcal C, O)$ is a pointed odd degree  genus $g$ hyperelliptic curve  and  $P,Q$ are  points of order $2g+1$ such that $Q\neq P,  \iota P$.

We call an enhanced hyperelliptic curve  of genus $g$ over $K$ {\sl normalized} if there exists a polynomial $f(x)\in K[x]$ of degree $2g+1$ without multiple roots such that
$\mathcal{C}=\mathcal{C}_f$, i.e., if $\mathcal{C}$ is the smooth projective model of $y^2=f(x)$, $O=\infty$, and
$x(P)=0,\ x(Q)=-1$.

\item[(ii)]
  By an \textsl{isomorphism} $\phi: (\mathcal C, O, P, Q )\to  ( \mathcal C_1, O_1,  P_1, Q_1 )$ of enhanced  hyperelliptic curves we mean a $K$-biregular map $\phi:\mathcal C\to \mathcal C_1$ such that $\phi(O)=O_1$,
$\phi(P)=P_1$, and $\phi(Q)=Q_1$.
We call  an  isomorphism  $\phi: (\mathcal C, O, P, Q )\to  ( \mathcal C_1, O_1,  P_1, Q_1 )$ of enhanced  hyperelliptic curves a {\sl marking}
if $\mathcal{C}_1=\mathcal{C}_{f_1}$ is the smooth projective model of $y_1^2=f(x_1)$, where $f(x_1)\in K[x_1]$ is a degree $2g+1$ polynomial without multiple roots,
  $O_1$ the infinite point $\infty_1$ of $\mathcal{C}_{f_1}$ and
  $x_1(P_1)=0, \ x_1(Q_1)=-1$.
  In other words,  a marking of $(\mathcal C, O, P, Q )$ is an isomorphism between $(\mathcal C, O, P, Q )$ and a normalized enhanced hyperelliptic curve.
\end{itemize}
\end{defn}

\begin{Rem}
\begin{itemize}
\item[(i)]
Notice that if $\phi: (\mathcal C,O)\to (\mathcal C_1,O_1)$ is a $K$-biregular map of pointed hyperelliptic curves and $P$ is a $K$-point of $\mathcal C$ having order $2g+1$ on the Jacobian
$J(\mathcal C)$ of $\mathcal C$, then the $K$-point $\phi(P)$
of $\mathcal C_1$ has order $2g+1$ on the Jacobian $J(\mathcal C_1)$ of $\mathcal C_1$.
Consequently, every $K$-biregular map $\phi: (\mathcal C,O)\to (\mathcal C_1,O_1)$ of pointed hyperelliptic curves yields an isomorphism $\phi: (\mathcal C, O, P, Q )\to  ( \mathcal C_1, O_1,  P_1, Q_1 )$ of enhanced  hyperelliptic curves, where $P$ and $Q$ are arbitrary points of order $2g+1$ on $C$ and $P_1=\phi(P)$, $Q_1=\phi(Q) $.
\item[(ii)]
Recall (Section \ref{prel}) that every pointed genus $g$ hyperelliptic curve $(\mathcal C,O)$ is $K$-isomorphic  to $(\mathcal C_f,\infty)$, where $\mathcal C_f$ is the odd degree genus $g$ hyperelliptic curve defined by the equation $y^2=f(x)$ (i.e., the normalization of the projective closure of the smooth plane affine curve
 $y^2=f(x)$) and $\infty$ is the unique ``infinite'' point on $C_f$. Therefore, every  enhanced  hyperelliptic curve is isomorphic to a  enhanced  hyperelliptic curve  $(\mathcal C_f,\infty, P,Q)$.
 \end{itemize}
 \end{Rem}

\begin{thm}\label{0,-1}
Let $(\mathcal C,O, P,Q)$ be an enhanced  genus $g$  hyperelliptic curve, where   $\mathcal C_f$ is the odd degree genus $g$ hyperelliptic curve defined by the equation $y^2=f(x)$.
Then there exists a  degree $2g+1$ monic polynomial $f_1(x)\in K[x]$ without multiple roots and an  enhanced   genus $g$ hyperelliptic curve $(\mathcal C_{f_1},\infty, P_1,Q_1)$
 that enjoys the following properties.

 \begin{itemize}
\item[(i)] $x(P_1)=0$ and $x(Q_1)=-1$, i.e., $(\mathcal C_{f_1},\infty, P_1,Q_1)$ is normalized.
\item[(ii)] The enhanced hyperelliptic curves  $(\mathcal C_{f_1},\infty, P_1,Q_1)$ and $(\mathcal C,O, P,Q)$ are isomorphic.
\end{itemize}
In other words, every enhanced  genus $g$  hyperelliptic curve admits a marking.
\end{thm}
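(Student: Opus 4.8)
The plan is to produce the marking by an explicit affine change of the $x$-coordinate of the kind permitted by Remark \ref{isoPoint}. We may assume $\mathcal{C}=\mathcal{C}_f$ and $O=\infty$ (this is built into the hypothesis, and in any case follows from part (ii) of the Remark preceding the theorem). Write $P=(a_1,c_1)$ and $Q=(a_2,c_2)$ with $c_i\ne 0$ and $c_i^2=f(a_i)$; since $Q\ne P,\iota(P)$ we have $a_1\ne a_2$. The goal is to choose $\lambda\in K^{*}$ and $r\in K$ so that the substitution $x_1=\lambda^2 x+r$ sends $a_1$ to $0$ and $a_2$ to $-1$.

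The two requirements $\lambda^2 a_1+r=0$ and $\lambda^2 a_2+r=-1$ determine $\lambda^2=1/(a_1-a_2)$ and $r=a_1/(a_2-a_1)$. The only nontrivial point is that the coefficient of $x$ must be a square; this is precisely where I would use that $K$ is algebraically closed, which furnishes a square root $\lambda$ of $1/(a_1-a_2)$, with $\lambda\ne 0$ as $a_1\ne a_2$. Having fixed $\lambda$ and $r$, I would set
$$f_1(x)=\lambda^{2(2g+1)}\,f\!\left(\frac{x-r}{\lambda^2}\right),$$
following the formula of Remark \ref{isoPoint}. Because $f$ is monic of degree $2g+1$ and $t\mapsto\lambda^2 t+r$ is an affine bijection of $K$, the polynomial $f_1$ is again monic of degree $2g+1$, and its roots are the images under this bijection of the roots of $f$, hence remain pairwise distinct; thus $f_1$ has no multiple roots.

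I would then define $\phi:\mathcal{C}_f\to\mathcal{C}_{f_1}$ by $\phi^{*}(x_1)=\lambda^2 x+r$ and $\phi^{*}(y_1)=\lambda^{2g+1}y$. A one-line check gives $(\phi^{*}y_1)^2=\lambda^{2(2g+1)}f(x)=f_1(\lambda^2 x+r)=\phi^{*}(f_1(x_1))$, so $\phi$ is a $K$-biregular isomorphism of pointed hyperelliptic curves carrying $\infty$ to $\infty_1$. Putting $P_1=\phi(P)$ and $Q_1=\phi(Q)$, by construction $x_1(P_1)=\lambda^2 a_1+r=0$ and $x_1(Q_1)=\lambda^2 a_2+r=-1$, which is (i).

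For (ii) I would assemble the remaining data. By part (i) of the Remark preceding the theorem, the biregular map $\phi$ sends the order-$(2g+1)$ points $P,Q$ to order-$(2g+1)$ points $P_1,Q_1$ on $J(\mathcal{C}_{f_1})$; moreover $\phi$ conjugates the hyperelliptic involution into that of $\mathcal{C}_{f_1}$ (a direct check on $\phi^{*}$ shows $\phi\circ\iota=\iota\circ\phi$, since $x$ is $\iota$-invariant and $y$ is $\iota$-anti-invariant), so from $Q\ne P,\iota(P)$ we get $Q_1\ne P_1,\iota(P_1)$. Hence $(\mathcal{C}_{f_1},\infty,P_1,Q_1)$ is a normalized enhanced hyperelliptic curve isomorphic to $(\mathcal{C},O,P,Q)$, i.e. $\phi$ is a marking. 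I expect no genuine obstacle here: the lone point where the argument could break down over a general field is the extraction of $\lambda$, and this is secured by the standing assumption that $K$ is algebraically closed.
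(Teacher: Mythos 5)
Your proof is correct and is essentially the paper's own argument: the paper likewise reduces to $\mathcal{C}=\mathcal{C}_f$, performs the affine substitution $x\mapsto (x-a)/(a-c)$ sending the abscissas of $P,Q$ to $0,-1$, sets $f_1(x)=f((a-c)x+a)/(a-c)^{2g+1}$ (your $f_1$ with $\lambda^2=1/(a_1-a_2)$, $r=a_1/(a_2-a_1)$), and uses algebraic closedness to extract the square root $\sqrt{a-c}$ needed to rescale $y$. Your extra verifications (monicity and distinctness of roots of $f_1$, preservation of orders and of the condition $Q_1\ne P_1,\iota(P_1)$ via the cited Remark) match what the paper leaves implicit, so there is nothing to add.
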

\begin{proof}
Without loss of generality we may assume that
$$\mathcal{C}=\mathcal{C}_f: y^2=f(x),$$
where $f(x)\in K[x]$ is a degree $2g+1$ monic polynomial without multiple roots.
Let
$$P=(a,b)\in \mathcal{C}_f(K), \ Q=(c,d) \in \mathcal{C}_f(K).$$
 Then $a$ and $c$ are {\sl distinct} elements of $K$, none of which is a root of $f(x)$, i.e.,
 $b \ne 0,\ d \ne 0.$
Let us consider the monic degree $2g + 1$ polynomial
 $$f_1(x)=\frac{f((a-c)x+a)}{(a-c)^{2g+1}}\in K[x]$$
without multiple roots and the hyperelliptic curve $\mathcal C_1$ defined by the equation
$y^2=f_1(x)$. Let us choose
$$r=\sqrt{a-c}\in K^{*}.$$
 Then we get a $K$-isomorphism of pointed hyperelliptic curves
 $$\phi:(\mathcal C_f,\infty)  \to (\mathcal{C}_{f_1},\infty), \ \phi(x,y)=\left(\frac{x-a}{a-c}, \ r (a-c)^gy\right),$$
 which gives rise to a $K$-isomorphism $$\phi: (\mathcal C_f,\infty, P,Q)\to (\mathcal C_{f_1},\infty, P_1,Q_1)$$
 of enhanced  hyperelliptic curves, where
 $P_1=\phi(P)=(0, r (a-c)^gb)$ and $Q_1=\phi(Q)=(-1, r (a-c)^gd)$.
 \end{proof}

\begin{Rem}
 Let $(\mathcal C_{f_1},\infty, P_1, Q_1)$ and  $(\mathcal C_{f_2},\infty, P_2,Q_2)$ be  two normalized  enhanced   hyperelliptic curves. In particular,
  the abscissas of both $P_1$ and $P_2$ equal $0$ and the abscissas of both $Q_1$ and $Q_2$ equal $-1$.
 \begin{itemize}
 \item[(i)]
 If there exists an isomorphism
 $$\psi: (\mathcal C_{f_1},\infty, P_1, Q_1) \cong (\mathcal C_{f_2},\infty, P_2,Q_2)$$  of  enhanced   hyperelliptic curves,
 then it follows from Remark~ \ref{isoPoint} that
 $$f_1(x)=f_2(x), \ \mathcal C_{f_1}=\mathcal C_{f_2},$$
and  $\psi$ is either the identity map or $\iota$. Consequently,
  either $P_2=P_1, Q_2= Q_1$ or $P_2=\iota P_1, Q_2=\iota Q_1$.
  This implies that every {\sl automorphism} $(\mathcal C_{f_1},\infty, P_1, Q_1)\cong (\mathcal C_{f_1},\infty, P_1, Q_1)$
  of a  normalized  enhanced   hyperelliptic curve is the {\sl identity map}.
 \item[(ii)]
 Let  $(\mathcal C,O, P, Q)$ be an  enhanced   genus $g$ hyperelliptic curve over $K$.
 Suppose that
 $$\phi_1: (\mathcal C,O, P, Q) \to  (\mathcal C_{f_1},\infty, P_1, Q_1), \  \phi_1: (\mathcal C,O, P, Q) \to  (\mathcal C_{f_2},\infty, P_2, Q_2)$$
 are two markings of  $(\mathcal C,O, P, Q)$. Then
 $$\psi:=\phi_2\circ \phi_1^{-1}:(\mathcal C_{f_1},\infty, P_1, Q_1) \to (\mathcal C_{f_2},\infty, P_2, Q_2)$$
 is an isomorphism of  enhanced   hyperelliptic curves that satisfies conditions (i). It follows that
 $$f_1(x)=f_2(x), \ \mathcal C_{f_1}=\mathcal C_{f_2},$$
 and either $\psi_2=\psi_1$ or $\psi_2=\psi_1\circ\iota_{\mathcal{C}}$.
 Therefore, every enhanced   hyperelliptic curve has exactly two markings, one is obtained from the other
 by composing with the hyperelliptic involution.
 \end{itemize}
 \end{Rem}

 \begin{Rem}
Let $(\mathcal C_f,\infty, P_2,Q_2)$  be a normalized  enhanced   hyperelliptic curve over $K$.
 By Theorem~ \ref{restate},
there exists precisely one decoration   $(u_1(x),u_2(x))$ of $f(x)$ such that
 \beq \label{PQ0m1}
 P=\left(0,\frac{u_1(0)+u_2(0)}{2}\right),
 \ Q=\left(-1,\frac{u_1(-1)-u_2(-1)}{2}\right).\eeq
It follows from Remarks \ref{changesign} and \ref{niceR} that  the same pointed  hyperelliptic curve  $(\mathcal C_f,\infty)$ gives rise to three other normalized enhanced
hyperelliptic curves
    $(\mathcal C_f,\infty, \iota P, \iota Q)$, $(\mathcal C_f,\infty,  P,\iota Q)$, $(\mathcal C_f,\infty, \iota P,  Q)$
that correspond to the very nice pairs
$$(-u_1(x), -u_2(x)), \ (u_2(x), u_1(x)), \ (-u_2(x), -u_1(x)),$$ respectively.
\end{Rem}


Now our goal is to describe the nice pairs $(u_1(x), u_2(x))$ explicitly. In what follows
we write $\#(A)$ for the cardinality of a finite set $A$.

\subsection{The case when $\fchar(K)$ does not divide  $2g+1$}
\label{charNotDiv}
Recall that in this case  each of the polynomials $u_1(x)$ and $u_2(x)$ has degree $g$.
Let us put  $$M(2g+1):=\{\eps \in K, \eps^{2g+1}=1, \eps\ne 1 \}.$$
The degree $2g$ polynomial $(x+1)^{2g+1}-x^{2g+1}$ has leading coefficient $2g+1$ and $2g$ distinct roots  $$\eta(\eps)= \frac1{\eps-1}, \; \text{where}\; \eps\in M(2g+1).$$
We write
$$\Eta_I(x)=\prod\limits_{\eps\in I}(x-\eta(\eps))\in K[x]$$
for each subset $I\subset M(2g+1)$. Clearly, $\Eta_I(x)$ is a degree $\#(I)$ monic polynomial;
$\Eta_I^{\prime}(x)=0$ if and only if $I=\emptyset$. It is also clear that if $\complement I$  is the complement of $I$ in  $M(2g+1)$, then
$$\Eta_{I}(x)\Eta_{\complement I}(x)=\Eta_{M(2g+1)}(x)=
\frac{(x+1)^{2g+1}-x^{2g+1}}{2g+1}.$$

\begin{Rem}
\label{cardg}
Since $\#(M(2g+1))=2g$, the equality $\#(I)=g$ holds if and only if
$\#(\complement I)=g$.
\end{Rem}

\begin{thm}
\label{theorem20}
\begin{itemize}
\item[(i)]
Nice pairs $(u_1(x),u_2(x))$ of degree $g$ over $K$ are exactly the pairs
$\left(\mu \Eta_{I}(x), \frac{2g+1}{\mu}\Eta_{\complement I}(x)\right)$, where $I$ is any $g$-element subset of $M(2g+1)$
and $\mu$ is any element of $K^{*}$.
\item[(ii)]
Let $I$ be a $g$-element subset of $M(2g+1)$.  If $\mu\in K^{*}$, then the corresponding  polynomial
\begin{equation}\begin{aligned}
\label{fI}
f_{I,\mu}(x)&:=f_{0,-1;\mu \Eta_{I}, \frac{2g+1}{\mu}\Eta_{\complement I}}
= x^{2g+1}+\left(\frac{\mu \Eta_{I}(x)+ \frac{2g+1}{\mu}\Eta_{\complement I}(x)}{2}\right)^2\\&
=(x+1)^{2g+1}+\left(\frac{\mu \Eta_{I}(x)- \frac{2g+1}{\mu}\Eta_{\complement I}(x)}{2}\right)^2\end{aligned}\end{equation}
decorated by $\left(\mu \Eta_{I}(x), \frac{2g+1}{\mu}\Eta_{\complement I}(x)\right)$
has no multiple roots for all but finitely many $\mu$.
\item[(iii)]
If $(\mathcal{C}_f,\infty,P,Q)$ is a normalized enhanced genus $g$ hyperelliptic curve $y^2=f(x)$ over $K$, then there is
precisely one pair $(I,\mu)$, where   $I$ is a $g$-element subset of $M(2g+1)$  and  $\mu \in K^{*}$ such that
$f(x)=f_{I,\mu}(x)$ and
\begin{equation}
\label{PsiPQ}
P=\left(0, \frac{\mu \Eta_{I}(0)+\frac{2g+1}{\mu}\Eta_{\complement I}(0)}{2}\right), \
Q=\left(-1, \frac{\mu \Eta_{I}(-1)-\frac{2g+1}{\mu}\Eta_{\complement I}(-1)}{2}\right).
\end{equation}
\item[(iv)]
Let $I$ be a $g$-element subset of $M(2g+1)$ and  $\mu\in K^{*}$ such that $f_{I,\mu}(x)$ has no multiple roots.
Then $\mathcal{C}_{f_{I,\mu}}: y^2=f_{I,\mu}(x)$ is an odd degree genus $g$ hyperelliptic curve over $K$, and
\eqref{PsiPQ} defines torsion points $P,Q \in \mathcal{C}_{f_{I,\mu}}(K)$ of order $2g+1$. In other words,
$(\mathcal{C}_{f_{I,\mu}},\infty,P,Q)$ is a normalized enhanced genus $g$ hyperelliptic curve.
\end{itemize}
\end{thm}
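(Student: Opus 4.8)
The plan is to reduce everything to results already established, since the four assertions are largely a repackaging of Theorems \ref{degeneration} and \ref{restate}; the one piece requiring genuine work is the explicit factorization in part (i), which I would prove first.

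For (i), I would start from the observation that, since $\fchar(K)$ does not divide $2g+1$, the polynomial $(x+1)^{2g+1}-x^{2g+1}$ has degree $2g$, leading coefficient $2g+1$, and (by Remark \ref{diff} with $a_1=0$, $a_2=-1$) no multiple roots, so that
$$(x+1)^{2g+1}-x^{2g+1}=(2g+1)\,\Eta_{M(2g+1)}(x)=(2g+1)\prod_{\eps\in M(2g+1)}(x-\eta(\eps)).$$
Given a nice pair $(u_1,u_2)$ of degree $g$, both factors have degree exactly $g$ and multiply to this product. Because $K$ is algebraically closed and every root of the product is simple, I would argue that the root sets of $u_1$ and $u_2$ are disjoint and together exhaust the $\eta(\eps)$; transporting this partition through the bijection $\eps\mapsto\eta(\eps)$ produces a subset $I\subset M(2g+1)$ with $u_1=\mu\,\Eta_I$ and $u_2=\nu\,\Eta_{\complement I}$, where $\mu,\nu\in K^{*}$ are the leading coefficients. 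The degree hypothesis forces $\#(I)=g$ (hence $\#(\complement I)=g$ by Remark \ref{cardg}), and matching leading coefficients in $u_1u_2=\frac{\mu\nu}{2g+1}\bigl((x+1)^{2g+1}-x^{2g+1}\bigr)$ yields $\mu\nu=2g+1$, i.e. $\nu=\tfrac{2g+1}{\mu}$. For the converse inclusion I would check the four conditions of Definition \ref{nice} directly, the only nontrivial one being (iv): $\Eta_I$ is separable of degree $g\ge 1$, so its derivative cannot vanish. I would also note here that $(I,\mu)$ is recovered from the pair as the root set and leading coefficient of $u_1$, so the correspondence is a bijection --- a fact I will reuse in (iii).

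With (i) in hand, part (ii) follows by applying Theorem \ref{degeneration} to $a_1=0$, $a_2=-1$ and the base pair $\bigl(\Eta_I,(2g+1)\Eta_{\complement I}\bigr)$ (both of degree $g$, with the correct product): scaling by $\mu$ and $\mu^{-1}$ carries $f_{0,-1;\mu u_1,\mu^{-1}u_2}$ to $f_{I,\mu}$, which is therefore free of multiple roots outside a finite set of $\mu$. For (iii), Theorem \ref{restate}(ii) supplies a \emph{unique} decoration of $f$ realizing $P$ and $Q$ through \eqref{PQu1u2}; part (i) identifies this decoration with $\bigl(\mu\Eta_I,\tfrac{2g+1}{\mu}\Eta_{\complement I}\bigr)$ for a unique $(I,\mu)$, and substitution turns \eqref{PQu1u2} into \eqref{PsiPQ}. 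For (iv), part (i) shows the pair is nice, hence a decoration of $f_{I,\mu}$ once the latter is squarefree, so Theorem \ref{restate}(ii) makes the points of \eqref{PsiPQ} torsion of order $2g+1$; since $x(P)=0\ne-1=x(Q)$ they satisfy $Q\ne P,\iota P$, giving a normalized enhanced curve.

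I expect the main obstacle to be the simultaneous bookkeeping in part (i) --- verifying that the root sets split $M(2g+1)$ into two $g$-element blocks \emph{and} that the leading coefficients normalize to $\mu$ and $(2g+1)/\mu$, so that the assignment $(u_1,u_2)\leftrightarrow(I,\mu)$ is honestly a bijection. Once that is pinned down, the remaining three parts are direct invocations of Theorems \ref{degeneration} and \ref{restate}.
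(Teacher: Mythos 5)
Your proposal is correct and follows essentially the same route as the paper: part (i) via the factorization of the separable degree-$2g$ polynomial $(x+1)^{2g+1}-x^{2g+1}=(2g+1)\Eta_{M(2g+1)}(x)$ into $\mu\Eta_I\cdot\frac{2g+1}{\mu}\Eta_{\complement I}$ with $\#(I)=g$ forced by the degree condition, part (ii) via the degeneration result (the paper routes it through Remark \ref{niceR}(iii), which rests on the same theorems), and parts (iii) and (iv) as direct combinations of (i) with Theorem \ref{restate}. If anything, you are slightly more careful than the paper, which leaves implicit both the verification of condition (iv) of Definition \ref{nice} for the pairs $\left(\mu\Eta_I,\frac{2g+1}{\mu}\Eta_{\complement I}\right)$ and the injectivity of the assignment $(I,\mu)\mapsto(u_1,u_2)$ needed for the uniqueness claim in (iii).
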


\begin{proof}
(i) Since $\fchar(K)$ does {\sl not} divide $2g+1$, the polynomial
$$(x+1)^{2g+1}-x^{2g+1}$$ has no multiple roots, its degree is $2g$, and the leading coefficient is $2g+1$.
It follows that each factor of $(x+1)^{2g+1}-x^{2g+1}$ is of the form $\mu \Eta_{I}(x)$,
where $I$ is a subset of $M(2g+1)$ and $\mu\in K^{*}$. This implies that
for  every factorization of   $(x+1)^{2g+1}-x^{2g+1}$ into a product of two polynomials
 $u_1(x)$ and $u_2(x)$ we have
 \begin{equation}
 \label{u1u2Psi}
 u_1(x)=\mu \Eta_{I}(x), \ u_2(x)= \frac{2g+1}{\mu}\Eta_{\complement I}(x),
 \end{equation}
 where $I$ is a subset of $M(2g+1)$ and $\mu$ is an element of $K^{*}$.  Nice pairs $(u_1(x),u_2(x))$
 must satisfy $\deg(u_1)=\deg(u_2)=g$.  In light of \eqref{u1u2Psi} and Remark \ref{cardg}, this condition
 is satisfied if and only if  $\#(I)=g$.

 Conversely, if  $I$ is a   $g$-element subset of $M(2g+1)$ and $\mu$ is an  element of $K^{*}$, then
 $$\left(\mu \Eta_{I}(x)\right)\left(\frac{2g+1}{\mu}\Eta_{\complement I}(x)\right)=(x+1)^{2g+1}-x^{2g+1},$$
 $$\deg\left(\mu \Eta_{I}\right)=g=\deg\left(\frac{2g+1}{\mu}\Eta_{\complement I}\right),$$
 i.e., $\left(\mu \Eta_{I}(x), \ \frac{2g+1}{\mu}\Eta_{\complement I}(x)\right)$ is a nice pair.
 This proves (i).

 (ii) follows from Remark \ref{niceR}(iii).

 (iii) follows from (i)  combined with Theorem \ref{restate}.

(iv) follows from (i)  combined with Theorem \ref{restate}.

\end{proof}

 \begin{example}
 Let $g=2$. Then there are exactly 3 families of genus 2 hyperelliptic curves with two pairs of torsion points of order 5.
(See \cite[Sect. 3]{Box2}.)
 \end{example}

 \subsection{ The case when $\fchar(K)$ divides $2g+1$}
 \label{charDiv}
 We write $\mathbb{Z}_{+}$ for the set of nonnegative integers.
 Let us assume that  $\fchar(K)=p>0$ and  $2g+1=p^k(2l+1)$,
 where $k$ and $l$ are positive integers and  $p\nmid (2l+1)$.  We put
   $$M(2l+1):=\{\eps \in K, \eps^{2l+1}=1, \eps\ne 1\}, \ \eta(\eps)=\frac{1}{\eps-1} \ \forall \eps\in M(2l+1).$$
  If  $\upsilon: M(2l+1)\to\mathbb Z_{+}$ is a function on  $M(2l+1)$ with values in $Z_{+}$, then we define its {\sl degree}
  $$\deg(\upsilon)=\sum_{\eps\in M(2l+1)}\upsilon(\eps) \in Z_{+}$$
  and the {\sl monic} polynomial
  \begin{equation}
  \label{degreeUps}
  \Upsilon_{\upsilon}(x)=\prod\limits_{\eps\in M(2l+1)}(x-\eta(\eps))^{\upsilon(\eps)}\in K[x]; \  \deg(\Upsilon_{\upsilon})=\deg(\upsilon).
  \end{equation}
The polynomial
\begin{equation}
 \begin{aligned}\left((x+1)^{2l+1}-x^{2l+1}\right)^{p^k}=
 (x+1)^{2g+1}-x^{2g+1}=(x^{p^k}+1)^{2l+1}-(x^{p^k})^{2l+1} \\=(2l+1)x^{2l p^k}+\binom{2l+1}{2}x^{(2l-1)p^k}+\cdots+\binom{2l+1}{1}x^{p^k}+1\end{aligned}
\end{equation}
has degree
 $2lp^k$ and leading coefficient $2l+1$. Its roots have multiplicity
  $p^k$ and coincide with the roots of the polynomial
   $(x+1)^{2l+1}-x^{2l+1}$.  Hence the set of  roots coincides with
   $$\{\eta(\eps)\mid \eps\in M(2l+1)\}.$$

   We will need the following elementary statement.
   \begin{lem}
   \label{factor Upsilon}
   Let  $\upsilon: M(2l+1)\to\mathbb Z_{+}$ be a function and $\mu \in K^{*}$.

   \begin{itemize}
   \item[(i)]
   The derivative $(\mu \Upsilon_{\upsilon}(x))^{\prime} \ne 0$ if and only if
   there is $\eps\in M(2l+1)$ such that $p$ does not divide $\upsilon(\eps)$.
   \item[(ii)] The polynomial $\mu \Upsilon_{\upsilon}(x)$ divides  $(x+1)^{2g+1}-x^{2g+1}$
   if and only if
   \begin{equation}
   \label{upsilonpk}
   \upsilon(\eps) \le p^k \ \forall \eps\in M(2l+1).
   \end{equation}
   \item[(iii)]  If  inequalities \eqref{upsilonpk} hold, then
   \begin{equation}
   \label{upsbarups}
    (x+1)^{2g+1}-x^{2g+1}=(\mu \Upsilon_{\upsilon}(x))\cdot \frac{2l+1}{2}\Upsilon_{\bar{\upsilon}}(x),
    \end{equation}
    where the function $\bar{\upsilon}: M(2l+1)\to\mathbb Z_{+}$ is defined by
    \begin{equation}
   \label{barupsilon}
    \bar{\upsilon}(\eps)= p^k-\upsilon(\eps )  \ \forall \eps\in M(2l+1).
    \end{equation}
    In addition, $(\mu \Upsilon_{\upsilon}(x))^{\prime} \ne 0$ if and only if
    $\left(\frac{2l+1}{2}\Upsilon_{\bar{\upsilon}}(x)\right)^{\prime}\ne 0$.
    If a polynomial $u(x)\in K[x]$ divides $(x+1)^{2g+1}-x^{2g+1}$, then there exist
    precisely one $\upsilon: M(2l+1)\to\mathbb Z_{+}$ and one $\mu \in K^{*}$ such that
    $u(x)=\mu \Upsilon_{\upsilon}(x)$. In addition,  $\upsilon$ satisfies \eqref{upsilonpk}.
   \end{itemize}
   \end{lem}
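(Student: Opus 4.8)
The plan is to deduce all three parts from the factorization $(x+1)^{2g+1}-x^{2g+1}=\bigl((x+1)^{2l+1}-x^{2l+1}\bigr)^{p^k}$ displayed just above, combined with the fact (the analog of Remark~\ref{diff}, applied with $2l+1$ in place of $2g+1$ and valid since $p\nmid 2l+1$) that $(x+1)^{2l+1}-x^{2l+1}$ is separable of degree $2l$, with leading coefficient $2l+1$ and with the $2l$ distinct simple roots $\eta(\eps)$, $\eps\in M(2l+1)$. Raising to the $p^k$ power, $(x+1)^{2g+1}-x^{2g+1}$ therefore has leading coefficient $2l+1$, and its roots are exactly the $\eta(\eps)$, each of multiplicity precisely $p^k$; in particular it has no other irreducible factors than the $x-\eta(\eps)$.

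For (i), I would pass to the logarithmic derivative in $K(x)$: since $\Upsilon_{\upsilon}=\prod_{\eps}(x-\eta(\eps))^{\upsilon(\eps)}$ is nonzero, one has $(\mu\Upsilon_{\upsilon})'=0$ if and only if $\sum_{\eps}\upsilon(\eps)/(x-\eta(\eps))=0$ in $K(x)$. The summands have pairwise distinct simple poles, so this rational function vanishes exactly when every residue $\upsilon(\eps)$ is zero in $K$, i.e. when $p\mid\upsilon(\eps)$ for all $\eps$; negating this gives (i). For (ii) I use unique factorization in $K[x]$: the unit factors $\mu$ and $2l+1$ are irrelevant to divisibility, $\mu\Upsilon_{\upsilon}$ carries the irreducible $x-\eta(\eps)$ to exponent $\upsilon(\eps)$, while $(x+1)^{2g+1}-x^{2g+1}$ carries it to exponent $p^k$ and has no other irreducible factors; hence $\mu\Upsilon_{\upsilon}$ divides $(x+1)^{2g+1}-x^{2g+1}$ precisely when $\upsilon(\eps)\le p^k$ for every $\eps$, which is \eqref{upsilonpk}.

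For (iii), assuming \eqref{upsilonpk} I set $\bar\upsilon(\eps)=p^k-\upsilon(\eps)\ge 0$ as in \eqref{barupsilon}; then $\Upsilon_{\upsilon}\Upsilon_{\bar\upsilon}=\prod_{\eps}(x-\eta(\eps))^{p^k}$ is the monic normalization of $(x+1)^{2g+1}-x^{2g+1}$, and comparing leading coefficients (the left-hand side has leading coefficient $2l+1$, while $\mu\Upsilon_{\upsilon}$ contributes $\mu$) pins down the scalar of the cofactor and yields \eqref{upsbarups}. For the derivative equivalence I would note that $p^k\equiv 0\pmod p$ because $k\ge 1$, so $\bar\upsilon(\eps)\equiv-\upsilon(\eps)\pmod p$ and hence $p\mid\bar\upsilon(\eps)$ iff $p\mid\upsilon(\eps)$; applying (i) to each of the two factors then shows that $(\mu\Upsilon_{\upsilon})'\ne 0$ iff the cofactor has nonzero derivative. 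Finally, for an arbitrary divisor $u(x)$ of $(x+1)^{2g+1}-x^{2g+1}$, its roots lie among the $\eta(\eps)$ with multiplicities at most $p^k$, so taking $\mu$ to be the leading coefficient of $u$ and $\upsilon(\eps)$ the multiplicity of $\eta(\eps)$ in $u$ gives $u=\mu\Upsilon_{\upsilon}$ with $\upsilon$ satisfying \eqref{upsilonpk}; uniqueness is clear since both $\mu$ and all the values $\upsilon(\eps)$ are read directly off $u$.

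The only genuinely nontrivial point is the vanishing criterion for $\sum_{\eps}\upsilon(\eps)/(x-\eta(\eps))$ invoked in (i); once the distinctness of the poles $\eta(\eps)$ is used this is immediate, and every remaining step in all three parts reduces to bookkeeping with root multiplicities and leading coefficients.
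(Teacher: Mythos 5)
Your proof is correct and follows essentially the same route as the paper's: both arguments reduce everything to the multiplicities of the roots $\eta(\eps)$ in the factorization $(x+1)^{2g+1}-x^{2g+1}=\left((x+1)^{2l+1}-x^{2l+1}\right)^{p^k}$, the only (inessential) variation being that you justify (i) via the logarithmic-derivative/partial-fraction residue computation, whereas the paper invokes the equivalent standard fact that a nonzero polynomial in characteristic $p$ has zero derivative iff it is a $p$th power, i.e., iff every root multiplicity is divisible by $p$. One incidental benefit of your leading-coefficient comparison in (iii): the cofactor it produces is $\frac{2l+1}{\mu}\Upsilon_{\bar{\upsilon}}(x)$, which is the correct scalar (consistent with Theorem \ref{theorem22}(i)), so the factor $\frac{2l+1}{2}$ printed in the lemma's display \eqref{upsbarups} is evidently a typo for $\frac{2l+1}{\mu}$.
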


\begin{proof}
(i) The derivative  of a nonzero polynomial $u(x)\in K[x]$ is not $0$ if and only if this polynomial is not a $p$th power in $K[x]$
of a polynomial, i.e., if it  has a root  whose multiplicity is not divisible by $p$. Since the set of roots of $\mu \Upsilon_{\upsilon}(x)$
coincides with $\{\eta(\eps)\mid \eps \in M(2l+1), \upsilon(\eps)\ne 0\}$ and the multiplicity of $\eta(\eps)$ equals
$\upsilon(\varepsilon)$, we obtain that there is $\eps \in M(2l+1)$ such that $\upsilon(\varepsilon)\ne 0$ and $p$ does {\sl not} divide  $\upsilon(\varepsilon)$.
This ends the proof of (i).

 (ii) Recall that each $\eta(\eps)$ is a root of $(x+1)^{2g+1}-x^{2g+1}$ with multiplicity $p^k$. This implies
 that  $\mu \Upsilon_{\upsilon}(x)$ divides  $(x+1)^{2g+1}-x^{2g+1}$ if and only if $\eta(\eps)$, viewed as a root
 of $\mu \Upsilon_{\upsilon}(x)$, has multiplicity $\le p^k$, i.e., $\upsilon(\varepsilon)\le p^k$.
 This ends the proof of (ii).

 Assume now that $(\mu \Upsilon_{\upsilon}(x))^{\prime} \ne 0$. By (i),  there is $\eps \in M(2l+1)$ such that $\upsilon(\varepsilon)\ne 0$ and $p$ does {\sl not} divide  $\upsilon(\varepsilon)$.
 Then $  \bar{\upsilon}(\eps)= p^k-\upsilon(\eps )$ is also {\sl not} divisible by $p$.
 Assertions (iii) and (iv) are obvious.

\end{proof}

\begin{defn}
We call a function $\upsilon: M(2l+1)\to\mathbb Z$ {\sl admissible}
if it enjoys the following properties.

\begin{itemize}
\item[(i)]
$$0\leq \upsilon(\eps )\leq p^k \ \forall \eps\in M(2l+1).$$
\item[(ii)]
There exists $\eps\in M(2l+1)$ such that $\upsilon(\eps )\not\equiv0\pmod p$.
\item[(iii)]
$$\sum\limits_{\eps\in M(2l+1)}\upsilon(\eps)\leq g,   \ \sum\limits_{\eps\in M(2l+1)}(p^k-\upsilon(\eps))\leq g.$$
\end{itemize}
\end{defn}

\begin{Rem}
If  $\upsilon: M(2l+1)\to\mathbb Z$ is an admissible function, then
$$\bar{\upsilon}: M(2l+1)\to\mathbb Z, \ \eps\mapsto p^k-\upsilon(\eps )$$
is also an admissible function.
\end{Rem}

\begin{example}
\label{exAd}
Let us partition the $2l$-element set $ M(2l+1)$ into a disjoint union of two $l$-element sets $I$ and $J$ and define a function
$$\upsilon_{I,J}:M(2l+1)\to\mathbb Z_{+},$$
$$ \upsilon_{I,J}(\eps)=\frac{p^k+1}{2}<p^k \ \forall \eps \in I, \ 
\upsilon_{I,J}(\eps)=\frac{p^k-1}{2}<p^k \ \forall \eps \in J.$$
None of $\upsilon_{I,J}(\eps)$ is divisible by $p$ and
$$\upsilon_{J,I}(\eps)=p^k-\upsilon_{I,J}(\eps) \ \forall \eps\in M(2l+1).$$
The function $\upsilon_{I,J}$ is
 {\sl admissible}, because
 $$\sum_{\eps \in I}\frac{p^k+1}{2}+\sum_{\eps \in J}\frac{p^k-1}{2}=l p^k <\frac{(2l+1)p^k-1}{2}=g.$$

\end{example}

\begin{thm}
\label{theorem22}
\begin{itemize}
\item[(i)]
Nice pairs $(u_1(x),u_2(x))$ of degree $g$ over $K$ are exactly the pairs
$\left(\mu \Upsilon_{\upsilon}(x),  \frac{2l+1}{\mu}\Upsilon_{\bar \upsilon}(x)\right)$, where $\upsilon$ is an  admissible
function on $M(2l+1)$ with
$$\deg(\upsilon)\le g, \ \deg(\bar{\upsilon})\le g$$
and $\mu\in K^{*}$.
\item[(ii)]
Let $\upsilon$ be an  admissible
function on $M(2l+1)$.  If $\mu\in K^{*}$, then the corresponding  polynomial
\begin{equation}
\label{fupsilon}\begin{aligned}
f_{\upsilon,\mu}(x)&:=f_{0,-1;\mu  \Upsilon_{\upsilon}, \frac{2l+1}{\mu}\Upsilon_{\bar \upsilon}}=
 x^{2g+1}+\left(\frac{\mu  \Upsilon_{\upsilon}(x)+ \frac{2l+1}{\mu}\Upsilon_{\bar \upsilon}(x)}{2}\right)^2\\&=
(x+1)^{2g+1}+\left(\frac{\mu \Upsilon_{\upsilon}(x)- \frac{2l+1}{\mu}\Upsilon_{\bar \upsilon}(x)}{2}\right)^2\end{aligned}\end{equation}
decorated by $\left(\mu \Upsilon_{\upsilon}(x), \frac{2l+1}{\mu}\Upsilon_{\bar \upsilon}(x)\right)$
has no multiple roots for all but finitely many $\mu$.
\item[(iii)]
If $(\mathcal{C}_f,\infty,P,Q)$ is a normalized enhanced genus $g$ hyperelliptic curve $y^2=f(x)$ over $K$, then there is
precisely one pair $(\upsilon, \mu)$, where $\upsilon$ is an admissible function  on $M(2l+1)$ and $\mu \in K^{*}$ such that
$f(x)=f_{\upsilon,\mu}(x)$ and
\begin{equation}
\label{UpsPQ}
P=\left(0, \frac{\mu \Upsilon_{\upsilon}(0)+\frac{2l+1}{\mu}\Upsilon_{\bar \upsilon}(0)}{2}\right), \
Q=\left(-1, \frac{\mu \Upsilon_{\upsilon}(-1)-\frac{2l+1}{\mu}\Upsilon_{\bar \upsilon}(-1)}{2}\right).
\end{equation}
\item[(iv)]
Let $\upsilon$  be an admissible function on $M(2l+1)$ and  $\mu\in K^{*}$ such that $f_{\upsilon,\mu}(x)$ has no multiple roots.
Then $\mathcal{C}_{f_{\upsilon,\mu}}: y^2=f_{\upsilon,\mu}(x)$ is an odd degree genus $g$ hyperelliptic curve over $K$, and
\eqref{UpsPQ} defines torsion points $P,Q \in \mathcal{C}_{f_{\upsilon,\mu}}(K)$ of order $2g+1$.
\end{itemize}
\end{thm}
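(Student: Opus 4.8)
The plan is to run the argument of Theorem~\ref{theorem20} with the square-free factorization of $(x+1)^{2g+1}-x^{2g+1}$ replaced by Lemma~\ref{factor Upsilon}, which classifies the divisors of this polynomial in the present case, where the roots $\eta(\eps)$ occur with multiplicity $p^k$. Exactly as in Theorem~\ref{theorem20}, parts (ii)--(iv) will be formal consequences of part (i) together with earlier results, so the whole substance lies in proving (i).

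First I would prove (i). Given a nice pair $(u_1(x),u_2(x))$, each factor divides the product $u_1(x)u_2(x)=(x+1)^{2g+1}-x^{2g+1}$, so by the uniqueness clause of Lemma~\ref{factor Upsilon}(iii) there is exactly one function $\upsilon\colon M(2l+1)\to\mathbb{Z}_{+}$ with $\upsilon(\eps)\le p^k$ and exactly one $\mu\in K^{*}$ such that $u_1(x)=\mu\,\Upsilon_{\upsilon}(x)$. The complementary factorization \eqref{upsbarups} then forces $u_2(x)$ to be a scalar multiple of $\Upsilon_{\bar\upsilon}(x)$, and matching leading coefficients (the product has leading coefficient $2l+1$ while $\Upsilon_{\upsilon}$ and $\Upsilon_{\bar\upsilon}$ are monic) gives $u_2(x)=\frac{2l+1}{\mu}\Upsilon_{\bar\upsilon}(x)$, where $\bar\upsilon(\eps)=p^k-\upsilon(\eps)$. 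It remains to match the defining conditions of a nice pair against admissibility: the constraint $0\le\upsilon(\eps)\le p^k$ is admissibility~(i); by \eqref{degreeUps} the bounds $\deg(u_1)\le g$ and $\deg(u_2)\le g$ read $\deg(\upsilon)\le g$ and $\deg(\bar\upsilon)\le g$, which is admissibility~(iii); and by Lemma~\ref{factor Upsilon}(i) the condition $u_1'(x)\ne0$ is equivalent to the existence of some $\eps$ with $p\nmid\upsilon(\eps)$, i.e.\ admissibility~(ii). The last clause of Lemma~\ref{factor Upsilon}(iii) shows that $u_1'\ne0$ automatically yields $u_2'\ne0$, so the two derivative conditions of Definition~\ref{nice}(iv) collapse to the single admissibility condition~(ii). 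Reversing these equivalences proves the converse: for any admissible $\upsilon$ with $\deg(\upsilon),\deg(\bar\upsilon)\le g$ and any $\mu\in K^{*}$, the pair $\bigl(\mu\Upsilon_{\upsilon},\frac{2l+1}{\mu}\Upsilon_{\bar\upsilon}\bigr)$ has product $(x+1)^{2g+1}-x^{2g+1}$ by \eqref{upsbarups}, has both degrees $\le g$, and has nonvanishing derivatives by Lemma~\ref{factor Upsilon}(i),(iii), hence is nice.

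With (i) established, part (ii) is precisely Remark~\ref{niceR}(iii): varying $\mu$ in $\bigl(\mu\Upsilon_{\upsilon},\frac{2l+1}{\mu}\Upsilon_{\bar\upsilon}\bigr)$ is the substitution $(u_1,u_2)\mapsto(\mu u_1,\mu^{-1}u_2)$ of Theorem~\ref{degenerationCHARp}, whose hypotheses $u_1'\ne0$ and $u_2'\ne0$ hold by admissibility~(ii); hence $f_{\upsilon,\mu}(x)$ has no multiple roots for all but finitely many $\mu$. Parts (iii) and (iv) then follow exactly as in Theorem~\ref{theorem20}, by feeding the classification of nice pairs from (i) into the dictionary of Theorem~\ref{restate} between decorations of $f(x)$ and pairs of points of order $2g+1$ with abscissas $0$ and $-1$: this yields both the existence and the uniqueness of $(\upsilon,\mu)$ in (iii) and the statement in (iv) that \eqref{UpsPQ} produces genuine torsion points of order $2g+1$.

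I expect the only genuinely new difficulty to be the bookkeeping in (i). In the case $\fchar(K)\nmid 2g+1$ a factor of $(x+1)^{2g+1}-x^{2g+1}$ was recorded by a subset of $M(2g+1)$, and ``nice'' was pinned down by the degree condition of Definition~\ref{nice}(iii). Here that condition is vacuous, the factor is instead recorded by a multiplicity function $\upsilon$ bounded by $p^k$, and the separating role is played by the derivative (non-$p$th-power) conditions of Definition~\ref{nice}(iv). Verifying that these inseparability conditions translate exactly into admissibility~(ii), and that they are preserved when one passes to the complementary multiplicities $\bar\upsilon$, is the one step requiring care; everything else runs parallel to Theorem~\ref{theorem20}.
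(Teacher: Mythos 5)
Your proposal is correct and takes essentially the same route as the paper: the paper's proof of Theorem~\ref{theorem22} consists precisely of the four citations you invoke --- (i) from Lemma~\ref{factor Upsilon} together with \eqref{degreeUps}, (ii) from Remark~\ref{niceR}(iii) (i.e.\ Theorem~\ref{degenerationCHARp}), and (iii), (iv) from (i) combined with Theorem~\ref{restate}. Your expanded bookkeeping for (i) --- using the uniqueness clause of Lemma~\ref{factor Upsilon}(iii) to write $u_1=\mu\Upsilon_{\upsilon}$, matching leading coefficients to get $u_2=\frac{2l+1}{\mu}\Upsilon_{\bar\upsilon}$, and translating the conditions of Definition~\ref{nice} (degree bounds, vacuous condition (iii), and the derivative conditions collapsing to admissibility~(ii) via $p\nmid\upsilon(\eps)\Leftrightarrow p\nmid\bar\upsilon(\eps)$) --- is exactly the verification the paper leaves implicit.
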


\begin{proof}
(i) follows from Lemma \ref{factor Upsilon} and \eqref{degreeUps}.

(ii) follows from (i) combined with Remark \ref{niceR}(iii).

(iii) follows from (i)  combined with Theorem \ref{restate}.

(iv) follows from (i)  combined with Theorem \ref{restate}.
\end{proof}

 \section{Computations of Weil pairings}
 \label{WeilComp}
 We will use the notation of Subsection \ref{charNotDiv}.
 In this section we assume that $\fchar(K)$ does {\sl not} divide $2g+1$; our goal is to compute the value of the Weil pairing
 between torsion points
  $P$ and  $Q$ in $\mathcal{C}(K)$ of order $2g+1$, where  $\alb (P)\neq \pm \alb (Q)$. We may assume that the curve is defined by the equation
$y^2=x^{2g+1}+v_1(x)^2,$ where
$$v_1(x)=\frac{\mu}2\Eta_I(x)+\frac{2g+1}{2\mu}\Eta_{\complement I}(x), $$
while
$$x^{2g+1}+v_1^2=(x+1)^{2g+1}+v_2(x)^2,$$
where
$$v_2(x)=\frac{\mu}2\Eta_I(x)-\frac{2g+1}{2\mu}\Eta_{\complement I}(x).$$
In this case we may assume that
$P=(0,v_1(0))$ and $Q=(-1,v_2(-1)).$

Let us consider the degree zero divisors $D_P=(P)-(\infty)$ and  $D_Q=(Q)-(\infty)$ on $\mathcal{C}$. We know that their linear equivalence classes
 have order $2g+1$.
Let us consider a Weierstrass point $\W=(w,0)$ on our curve, where $\al$ is a root of  $x^{2g+1}+v_1(x)^2$.  The linear equivalence class
of the divisor  $D_{\W}:=(\W)-(\infty)$ has order  $2$. Therefore, the linear equivalence class
of the divisor
 $$D=D_P-D_{\W}=(P)-(\W)$$ has order $2(2g+1)$.
Since $\div(x-w)=2(\W)-(\infty))$, the divisor  $2D$ is linearly equivalent to $2D_P$.

We have
$$\begin{aligned}e_{2(2g+1)}(P,Q)=e_{2(2g+1)}(D,D_Q)=e_{2g+1}(2D,D_Q)\\=e_{2g+1}(2D_P,D_Q)
=(e_{2g+1}(D_P,D_Q))^2.
\end{aligned}$$
Let us put
$$g_Q=(y-v_2(x))^2.$$
Then
$$\div(g_Q)=2\div(y-v_2(x))=2(2g+1)(Q)-2(2g+1)(\infty).$$
Let us put
$$g_P=\frac{(y-v_1(x))^2}{(x-w)^{2g+1}}.$$
Since
$$\div(y-v_1(x))=(2g+1)(P)-(2g+1)(\infty)$$
and
$$\div(x-w)=2(\W)-2(\infty),$$
we have
$$\div(g_P)=2(2g+1)(P)-2(2g+1)(\W).$$
Evaluating
$g_P(D_Q)$, we get
$$\begin{aligned}g_P(D_Q)=\frac {g_P(Q)}{g_P(\infty)}=-\frac{(v_2(-1)-v_1(-1))^2}{(1+w)^{2g+1}}\\=-
\left(\frac{2g+1}{\mu}\right)^2\frac{\Eta_{\complement I}^2(-1)}{(1+w)^{2g+1}},
\end{aligned}$$
since $g_P(\infty)=1$.
Now let us evaluate $g_Q(D)$.
We have
$$g_Q(D)=\frac{g_Q(P)}{g_Q(W)}=\frac{((v_1(0)-v_2(0))^2}{v_2(w)^2}
=\left(\frac{2g+1}{\mu}\right)^2\frac{\Eta_{\complement I}^2(0)}{v_2(w)^2}.$$
Notice that since
 $w$ is a root of $(x+1)^{2g+1}+v_2^2(x),$ then
$$v_2(w)^2=-(1+w)^{2g+1},$$
which gives us
$$g_Q(D)=-\left(\frac{2g+1}{\mu}\right)^2\frac{\Eta_{\complement I}^2(0)}{(1+w)^{2g+1} }.$$
Therefore,
$$e_{2(2g+1)}(P,Q)=\frac{g_P(D_Q)}{g_Q(D)}= \frac{\Eta_{\complement I}^2(-1)}
{\Eta_{\complement I}^2(0)}=\frac{\prod\limits_{i\in \complement I}(1+\eta(\eps))^2}{\prod\limits_{i\in \complement I} \eta(\eps)
^2}=\left(\prod\limits_{\eps\in \complement I}\eps \right)^2, $$
since
$(1+\eta(\eps))/ \eta(\eps) =\eps.$
This implies that
$$e_{2g+1}(P,Q)=\pm\prod\limits_{\eps\in \complement I}\eps.$$
Since $e_{2g+1}(P,Q)$ and all $\eps$ are $(2g+1)$th  roots of unity, and  $2g+1$ is {\sl odd}, we get at last
$$e_{2g+1}(P,Q)=\prod\limits_{\eps\in \complement I}\eps.$$

\end{document}